\documentclass[11pt,oneside]{amsart}
\usepackage{amsfonts,amsmath,amsthm}
\usepackage{amssymb,epsfig}
\usepackage{pgf,tikz}
\usepackage{fancybox}
\usepackage[T1]{fontenc}
\usepackage{stmaryrd}
\usepackage{graphicx}
\usepackage{eso-pic}
\usepackage{geometry}
\usepackage{tikz-cd}
\usepackage[unicode=true]{hyperref}
\hypersetup{colorlinks = true}
\hypersetup{
    colorlinks = true,
    linkcolor = {blue},
    citecolor={blue}
}
\usepackage[above,below,verbose,section]{placeins}
\usepackage{mathrsfs}
\usepackage{mathpazo}
\usepackage{lastpage}
\usepackage{enumitem}

\makeatother

\usepackage[colorinlistoftodos,prependcaption]{todonotes}
\presetkeys%
{todonotes}%
{inline,backgroundcolor=white}{}
\tikzset{/tikz/notestyleraw/.append style={text=black}}

\newtheorem{thm}{Theorem}[section]

\newtheorem{lem}[thm]{Lemma}
\newtheorem{defn}[thm]{Definition}
\newtheorem{prop}[thm]{Proposition}
\newtheorem{cor}[thm]{Corollary}

\newtheorem{rmk}[thm]{Remark}

\newcommand{\be}{\begin{eqnarray}}
\newcommand{\ee}{\end{eqnarray}}
\newcommand{\ben}{\begin{eqnarray*}}
\newcommand{\een}{\end{eqnarray*}}
\newcommand{\beq}{\begin{equation}}
\newcommand{\eeq}{\end{equation}}
\newcommand{\beal}{\begin{aligned}}
\newcommand{\enal}{\end{aligned}}

\newcommand{\eps}{\varepsilon}

\newcommand{\T}{\mathbb{T}}
\newcommand{\R}{\mathbb{R}}

\newcommand{\N}{\mathbb{N}}

\newcommand{\Z}{\mathbb{Z}}

\newcommand{\om}{\omega}

\newcommand{\cM}{\mathcal{M}}

\newcommand{\cP}{\mathcal{P}}

\newcommand{\cL}{\mathcal{L}}

\newcommand{\wh}{\widehat }
\newcommand{\wt}{\widetilde }

\title{ Statistical regularity and linear response of Mather Measures for Tonelli Lagrangian systems}

\author{Alfonso Sorrentino}
\address{(Alfonso Sorrentino) Dipartimento di Matematica, Universit\'e degli  Studi di Roma ``Tor Vergata'', Via della ricerca scientifica 1, 00133 Rome, Italy}
\email{sorrentino@mat.uniroma2.it}

\author{Jianlu Zhang}
\address{
(Jianlu Zhang) State Key Laboratory of Mathematical Sciences, Academy of Mathematics and systems science, Chinese Academy of Sciences, Beijing 100190, China}
\email{jellychung1987@gmail.com}

\author{Siyao Zhu}
\address{
(Siyao Zhu) State Key Laboratory of Mathematical Sciences, Academy of Mathematics and systems science, Chinese Academy of Sciences, Beijing 100190, China}
\email{zsydtc@amss.ac.cn}

\subjclass[2010]{35B40,\  37C40, \ 37J40,\  37J50,\ 49L25}
\keywords{Mather measures, Statistical stability, Linear response}
\thanks{{\it Statements and Declarations:} The authors declare no competing interests and no data were generated or analyzed in this study.}
\date{}
\begin{document}

\maketitle

%
%
%

\begin{abstract}
We study the statistical regularity of Mather measures associated with $C^1$ perturbations of a Tonelli Lagrangian. When the unperturbed Mather measure is supported on a quasi-periodic torus with a Diophantine frequency, we establish H\"older continuity of the perturbed Mather measure with respect to the perturbation parameter. The H\"older exponent is shown to depend explicitly on the Diophantine index of the frequency. We also discuss the possibility of achieving Lipschitz regularity using KAM theory.
\end{abstract}

\bigskip


\section{Introduction}

The Aubry-Mather theory provides a comprehensive description of the minimizing measures for Tonelli Lagrangian systems. A central concept is that of \textit{statistical stability}, which concerns how these Mather measures, as elements of the space of probability measures, change under perturbations of the system. This notion is intimately linked to quantitative estimates of the impact of perturbations on the system's dynamics.

\medskip
In this article, we consider two common types of perturbations:
\begin{enumerate}
    \item \textit{Ma\~n\'e's perturbation} \cite{Mane96}: \(L_\eps(x,v) = L(x,v) + \eps f(x)\) with \(0<\eps\ll 1\) and \(f\in C(\T^n,\R)\).
    \item \textit{Cohomological perturbation} \cite{Mather91}: \(L_c(x,v) = L(x,v) - \langle c,v \rangle\) with \(c\in H^1(\T^n,\R)\) and \(|c|\ll 1\).
\end{enumerate}

In suitable topologies, \(L_\eps \to L\) as \(\eps\to 0\) and \(L_c \to L\) as \(c\to {0}\). Our goal is to investigate the regularity of the associated Mather measures \(\mu_\eps\) and \(\mu_c\) as functions of the parameters \(\eps\) and \(c\). To make this precise, we equip the space of Borel probability measures on the tangent bundle $T\T^n$,  that we denote \(\cP(T\T^n)\), with the so-called \(1\)-Wasserstein distance (see \cite{Mane96} for an alternative choice of topology).

We say the map \(\eps \mapsto \mu_\eps\) (respectively, \(c \mapsto \mu_c\)) is \textit{statistically stable} if it is continuous at \(0\) (or \({0}\)). The notion of \textit{statistical regularity} refines this by providing a modulus of continuity. Naturally, one aims for stronger forms of regularity, such as H\"older or Lipschitz continuity, or even differentiability (the so called {\it linear response}). These are more challenging to obtain and typically require more detailed dynamical information about the perturbed system.

\begin{rmk}
For uniformly hyperbolic systems, this program is well-developed, with results on Lipschitz regularity and, in some cases, differentiability with respect to perturbations (see \cite{Bal, Gal}). In such systems, it is also known that generic Mather measures are supported on a unique hyperbolic periodic orbit \cite{Con}. However, Tonelli Lagrangian systems are generally not uniformly hyperbolic, and much less is known about the regularity of their Mather measures. The famous Ma\~n\'e conjecture posits that for generic Lagrangians, the Mather measure is ergodic and supported on a unique hyperbolic periodic orbit. While significant progress has been made from a perturbative viewpoint \cite{FR1, FR2, Con, Con2}, this conjecture remains largely open. 
\end{rmk}

In this paper, we focus on the two classes of perturbations mentioned above. Under the key assumption that the unperturbed Mather measure of \(L\) is supported on a quasi-periodic torus with a non-resonant frequency, we establish H\"older continuity for the perturbed Mather measures. Since such a quasi-periodic torus is not hyperbolic, this result provides a first step toward understanding the statistical stability of non-hyperbolic systems. \\

\medskip

\noindent {\bf Summary of our main results:}

\smallskip

\begin{itemize}
\item[(1)]  {\bf (Ma\~n\'e's Lagrangian case)} Suppose $L_\eps(x,v)=\frac12|v-V_\eps(x)|^2$ with a  vector field $V_\eps:\T^n\rightarrow \R^n$, which depends Lipschitz on $(x,\eps)\in\T^n\times[0,1]$ and $V_0\equiv \om\in\R^n$, where $\omega$ satisfies one of the these two conditions:
\begin{itemize}
\item  $\omega \in \mathcal{D}_{\sigma,\tau}$ is a finite Diophantine vector (see \eqref{def:Diophantine in Rn} for a precise definition);
		\item $\omega = (\omega_1,\ldots,\omega_n)$ with each component $\omega_i$ being algebraic\footnote{A real number is called \emph{algebraic} if it is a root of a non-zero polynomial with integer coefficients.}, for $i = 1,\ldots,n$.
\end{itemize}

\smallskip
Then: 

\medskip
\begin{itemize}
\item[(i)]  {\rm (Upper bound)}
	The Mather measure set $\wt{\mathfrak{M}}_\eps$ (see Definition \ref{Mathermeasure}) is H\"older continuous w.r.t. $\eps$, namely 
	\[
	W_1(\widetilde{\mathfrak{M}}_0, \widetilde{\mathfrak{M}}_\eps)
	\le C\,\delta^{\,l}
	\]
	for some suitable constants $l=l(n,\om)\in(0,1)$ and $C = C(n,\omega)>0$ (see Remark~\ref{rmkA2} for the expression of $l$ for each cases of $\om$). Here $W_1(\cdot,\cdot)$ denotes the  $1-$Wasserstein distance (see \eqref{eq:Wasserstein} for a precise definition).
	
	\smallskip
	\noindent (See Theorem~\ref{Thm1} for its proof)
	\medskip
	
\item[(ii)] {\rm (Lower bound)} Suppose $n=2$ and $\om\in\mathcal D_{\sigma.\tau}$. For any $0 < r < \tau$, there exists a decreasing sequence $\{\eps_j\}_{j\in\Z_+}$ tending to $0$ as $j\to+\infty$, such that 
	\[
	W_1\bigl(\widetilde{\mathfrak{M}}_0,\,\widetilde{\mathfrak{M}}_{\eps_j}\bigr)
	\ge c\,\eps_j^{1/(r+1)}
	\]
	for some constant  $c>0$ independent of $j\in\Z_+$.
	
	\medskip
	\noindent (See Theorem~\ref{Thm5} for its proof)\\
 \end{itemize}
 \item[(2)]{\bf (Cohomological perturbation case)} Suppose $L_c(x,v)=L(x,v)-\langle c,v\rangle$ is a family of Tonelli Lagrangians parametrized by $c\in H_1(\T^n,\R)$ with $|c|\leq 1$. Associated to $c=0$, the Mather measure set $\wt{\mathfrak M}(0)$ is a singleton and the unique Mather measure is supported on a KAM torus with  frequency $\om\in\mathcal D_{\sigma,\tau}$. Then there exists \(C=C(n,\omega,H_0)>0\) such that: 
 \[
		W_1\big(\widetilde{\mathfrak{M}}(c), \widetilde{\mathfrak{M}}(0)\big)
		\le C\,|c|^{\frac{1}{\,n+\tau+1\,}} \qquad \forall\; c\in H_1(\T^n,\R), \; |c|\leq 1.
	\]
	
	\medskip
	\noindent (See Theorem \ref{Thm4} for its proof)\\
	
 \item[(3)]{\bf (Ma\~n\'e's perturbation case)} 	Suppose $L_\eps(x,v)=L(v)-\eps f(x)$ is a family of Tonelli Lagrangians  parametrized  by $\eps\in [0,1)$ and assume that $\wt{\mathfrak M}(0)$, the Mather measure set associated to $\eps=0$, consists of a single measure supported on a KAM torus with frequency $\om\in\mathcal D_{\sigma,\tau}$.
 \smallskip
 \begin{itemize}
 \item[(i)] Then, there exists a modulus of continuity \(\Theta : \mathbb{R}^+ \to \mathbb{R}^+\), with \(\lim_{r\to 0^+}\Theta(r)=0\), such that:
	\[
	W_1 \bigl(\widetilde{\mathfrak{M}}(\varepsilon), \widetilde{\mathfrak{M}}(0)\bigr) \;\leq\; \Theta(\varepsilon) \qquad \forall\; \eps\in [0,1).
	\]
	
		\medskip
	\noindent (See Theorem~\ref{Thm3} for its proof)
	\smallskip

 \item[(ii)] {(Linear response)} There exists a sequence $c_\eps\in H_1(\T^n,\R)$ tending to ${\bf 0}$ as $\eps\rightarrow 0$, such that the associated 
	$\widetilde{\mathfrak{M}}(c_\varepsilon)$ admits a linear response, {\it i.e.}, 
	\[
\widetilde{\mathfrak{M}}_1 := 
\lim_{\varepsilon \to 0} 
\frac{\widetilde{\mathfrak{M}}(c_\varepsilon)-\widetilde{\mathfrak{M}}(0)}{\varepsilon}
\]
exists in the sense of weak convergence of measures. Moreover,  for any 
	$g \in C_c^\infty(T\mathbb{T}^n,\mathbb{R})$ we have 
	\begin{align*}
	 \int g\, d\widetilde{\mathfrak{M}}_1
	&= \langle \partial_1 g(x,\omega), \psi_1(x,\varepsilon) \rangle \\
	&\quad +\sum_{k\in\Z^n\backslash \{{\bf 0}\}} \frac{\langle \partial_2 g(x,\omega), \partial_{pp}^{-1}L(0)k\rangle}{\langle k,\omega\rangle} f_k \, e^{i\langle k,x\rangle}\\
	&\quad +\langle \partial_2 g(x,\omega), \partial_{pp}^{-1}L(0)c_1\rangle,
	\end{align*}
where $c_1=\lim_{\eps\rightarrow 0_+}c_\eps/\eps$, $\psi_1(x,\varepsilon)$ is defined in \eqref{eq:psi-expansion} and $f_k$ denotes the Fourier coefficient of $f$ for $k\in\mathbb{Z}^n$.

\medskip
\noindent (See  Theorem~\ref{Thm:KAM-W1} for its proof)
\end{itemize}
\end{itemize}

\begin{rmk}
The results in  item (1) are  used in a fundamental way in the proof of the results in items (2) and (3). We also remark that item (2) partially answers an open problem posed by Walter Craig in \cite{Bolotin03openproblem}, which sought to connect the regularity of Mather measures with the arithmetic properties of the derivative of Mather's \(\alpha\)-function (see Remark \ref{rmk:DiophantineImproved} in Section \ref{Sec: generalcase}). Moreover, since KAM theory guarantees that  quasi-periodic tori are prevalent, this makes our results relevant for understanding perturbative dynamics in a broader context.
\end{rmk}

\medskip

\noindent{\bf Structure of the paper.} This article is organized as follows:
\begin{itemize}
\item  In Section \ref{sec:preliminary} we gather the necessary preliminary material that will be used throughout the rest of the article.
    \item In Section~\ref{Sec:Mane L}, we begin with the special case of Ma\~n\'e's Lagrangian of the  form \eqref{eq:ManeL}. This simpler setting allows us to obtain H\"older continuity of the Mather measures without heavy dynamical machinery. We establish an upper bound for the convergence rate in Theorem~\ref{Thm1} and a lower bound for the two-dimensional case in Theorem~\ref{Thm5}. 
    In  Remark \ref{rmkA2} and Remark \ref{rmk:2}, the relation between the H\"older exponent and the Diophantine index is analyzed. Even under these favorable assumptions, a gap remains between the upper and lower bounds. For \(n>2\), a general lower bound is unattainable, but we discuss special arithmetic conditions that yield a rough estimate in subsection~\ref{Sec:3.3}.
    We emphasize that in our setting the perturbation is only Lipschitz, which requires an additional discussion of Aubry-Mather theory in the case of low regularity.

    \item In Section~\ref{Sec: generalcase}, we use weak KAM theory to generalize the results to arbitrary Tonelli perturbations of a quadratic Lagrangian whose Mather measure is supported on a quasi-periodic torus. By transforming the perturbed system into a Ma\~n\'e-type Lagrangian that shares the same Mather measures, the conclusions of Section~\ref{Sec:Mane L} can be applied. We present regularity results for cohomological perturbations in Theorem~\ref{Thm4} and for Ma\~{n}\'{e} potential perturbations in Theorem~\ref{Thm3}.

    \item In Section~\ref{Sec:Linearresponse}, we discuss the possibility of Lipschitz continuity and differentiability (linear response) using KAM theory. The main result is stated in Theorem~\ref{Thm:KAM-W1}.
\end{itemize}

\medskip

Finally, we briefly comment on the methodology. The results in Sections \ref{Sec:Mane L} and \ref{Sec: generalcase} rely on a quantitative estimate for the convergence rate of Birkhoff averages for quasi-periodic systems. This estimate, borrowed from \cite{ZhangCPDE}, is fundamentally linked to the Diophantine nature of the frequency via the Denjoy-Koksma inequality \cite{Herman79}. The discussion of linear response in Section~\ref{Sec:Linearresponse} draws on techniques from \cite{YanETDS2014} for the regularity of viscosity solutions with respect to cohomological perturbations, and from \cite{CarandSorrentino2021} for the smoothness of Mather's \(\beta\)-function.

\medskip

\noindent{\bf Acknowledgements.} JZ acknowleges the support of the National Key R\&D Program of China (No. 2022YFA1007500) and the National NSF of China (No. 12231010, 12571207).
AS acknowledges the support of the Italian Ministry of University and Research's PRIN 2022 grant ``\textit{Stability in Hamiltonian dynamics and beyond}'' and of the Department of Excellence grant MatMod@TOV (2023-27), awarded to the Department of Mathematics at University of Rome Tor Vergata. AS is member of the INdAM research group GNAMPA and the UMI group DinAmicI.
SZ acknowleges the the Postdoctoral Fellowship Program of CPSF under Grant Number GZB20250720.


\section{Preliminaries}\label{sec:preliminary}

\subsection{Notation and setting}
Throughout this paper, \( \mathbb{T}^n := \mathbb{R}^n / \mathbb{Z}^n \) denotes the flat \( n \)-dimensional torus, equipped with the standard Euclidean norm \(|\cdot|\). The tangent and cotangent bundles over \( \mathbb{T}^n \) are denoted by \( T\mathbb{T}^n \) and \( T^*\mathbb{T}^n \), with coordinates \( (x,v) \) and \( (x,p) \), respectively. For both, \( \pi: T\mathbb{T}^n \text{ (resp. } T^*\mathbb{T}^n\text{)} \to \mathbb{T}^n \) is the canonical projection. The Euclidean norm is naturally inherited by the fibers.

\medskip
\( C(\mathbb{T}^n,\mathbb{R}) \) denotes the space of continuous functions with the sup norm \(\|\cdot\|_\infty\). The spaces of \(k\)-times continuously differentiable and Lipschitz continuous functions are denoted by \( C^k(\mathbb{T}^n,\mathbb{R}) \) and \( C^{0,1}(\mathbb{T}^n,\mathbb{R}) \). We denote by $C_c(T\T^n, \R)$ the space of continuous functions on $T\T^n$ with compact support.

\medskip
A vector $\omega \in \mathbb{R}^n$ is said to be \emph{non-resonant} if
\[
\langle k, \omega\rangle \neq 0 \quad \text{for all } k \in \mathbb{Z}^n \setminus \{{0}\}.
\]
More strongly, $\omega \in \mathbb{R}^n$ is \emph{Diophantine with exponent} $\tau \ge n-1$ if there exists a constant $\sigma> 0$ such that
\begin{equation}\label{def:Diophantine in Rn}
|\langle \omega, k \rangle| \;\geq\; \sigma |k|^{-\tau}, \quad \forall\, k \in \mathbb{Z}^n \setminus \{{0}\}.
\end{equation}
Denote by $\mathcal D_{\sigma,\tau}\subset\R^n$ the set of such vectors.

\begin{rmk} 
For an irrational number $\nu \in \mathbb{R}$, the Diophantine exponent corresponding to it can be characterized as
\be\label{def:Diophantine in R}
\tau(\nu) \;=\; \sup \left\{ \gamma \geq 0 : 
\liminf_{m \to +\infty} m^\gamma \,\| m \nu\|_{\mathbb{Z}} = 0 \right\}
\ee
where $\| \cdot \|_{\mathbb{Z}}$ denotes the distance to the nearest integer.

\end{rmk}

\subsection{Viscosity solutions}
Consider a Hamiltonian \( H: T^*\mathbb{T}^n \to \mathbb{R} \) that is continuous, convex, and superlinear in the momentum variable \( p \). Its Lagrangian conjugate \( L: T\mathbb{T}^n \to \mathbb{R} \) is defined by the Fenchel transform
\[
L(x,v) := \sup_{p \in T_x^*\mathbb{T}^n} \bigl\{ \langle p, v \rangle - H(x,p) \bigr\}.
\]
For such a Hamiltonian, there exists a unique constant \(c(H) \in \mathbb{R}\), called the \emph{ergodic constant} or \emph{Ma\~n\'e critical value}, such that the Hamilton-Jacobi equation
\begin{equation}\label{H-J}
H(x, du) = c(H), \quad x \in \mathbb{T}^n
\end{equation}
admits a viscosity solution \cite{Fathiweakkam, Mather91}.

\begin{defn}
    A function \( u \in C(\mathbb{T}^n, \mathbb{R}) \) is a \emph{viscosity subsolution} of \eqref{H-J} if, for every \( x_0 \in \mathbb{T}^n \) and every \( C^1 \) function \( \varphi \) such that \( u - \varphi \) has a local maximum at \( x_0 \), we have \( H(x_0, d\varphi(x_0)) \le c(H) \). It is a \emph{viscosity supersolution} if for every local minimum of \( u - \varphi \) at \( x_0 \), we have \( H(x_0, d\varphi(x_0)) \ge c(H) \). A function that is both a subsolution and a supersolution is a \emph{viscosity solution}.
\end{defn}

In the following, all solutions, subsolutions, and supersolutions are understood with respect to this critical value $c(H)$ unless otherwise stated. 

A key concept for non-smooth subsolutions is that of a \emph{reachable gradient} \cite{Cannarsa_Sinestrari_book}.
\begin{defn}
    Let \( u \) be a viscosity subsolution of \eqref{H-J}. A vector \( p \in T_x^*\mathbb{T}^n \) is a \emph{reachable gradient} of \(u\) at \(x\) if there exists a sequence \(\{x_k\} \subset \mathbb{T}^n \setminus \{x\}\) such that \(u\) is differentiable at each \(x_k\) and \(\lim_{k\to\infty} (x_k, du(x_k)) = (x,p)\). The set of all reachable gradients of \(u\) at \(x\) is denoted by \(D^* u(x)\).
\end{defn}

\subsection{Aubry-Mather theory for low-regularity Lagrangians}
We now assume \(L\) is only continuous. 
For every \(t>0\), we define the \emph{minimal action function} \(h_t : \mathbb{T}^n \times \mathbb{T}^n \to \mathbb{R}\) by
\[
h_t(x, y) := \inf_{\gamma} \int_{-t}^{0} L\big(\gamma(s), \dot \gamma(s)\big) + c(H)\, ds,
\]
where the infimum is taken over all absolutely continuous curves \(\gamma : [-t,0] \to M\) satisfying \(\gamma(-t) = x\) and \(\gamma(0) = y\). The \emph{Peierls barrier} is defined as
\[
h^{\infty}(x, y) := \liminf_{t \to +\infty} h_t(x, y);
\]
it turns out to be well posed for any $x\in\T^n$ (see \cite{DaviniFathi2016}), the function $h^\infty(x,\cdot)$ is a viscosity solution of \eqref{H-J}.
The \emph{projected Aubry set} is defined as
\[
\mathcal{A} := \{ x \in \mathbb{T}^n : h^{\infty}(x,x) = 0 \}.
\]

\begin{lem}\cite[Theorem 4.8]{Davini06}\label{lem:staticcurve}
    For any \( y \in \mathcal{A} \), there exists an absolutely continuous curve \( \gamma : \mathbb{R} \to \mathcal{A} \) with \( \gamma(0) = y \) such that for all \(a\le b\),
    \begin{equation}\label{eq:defn-stat}
    \int_a^b L(\gamma(s), \dot{\gamma}(s)) + c(H)\, ds + h^\infty(\gamma(b), \gamma(a)) = 0.
    \end{equation}
    Such a curve is called \emph{static}.
\end{lem}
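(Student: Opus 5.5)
The natural approach is the weak KAM construction of calibrated curves for merely continuous Lagrangians: build the static curve as a limit of almost-minimizing curves of increasing length, using compactness from superlinearity and lower semicontinuity of the action. I assume $L$ is continuous on $T\T^n$, convex and superlinear in $v$ (the Fenchel transform of $H$). I also use the standard facts that $h^\infty$ is finite and Lipschitz, that it satisfies the triangle inequality $h^\infty(x,z)\le h^\infty(x,y)+h_t(y,z)$, and that $h_t(x,y)+h^\infty(y,x)\ge 0$ for all $x,y$ and $t>0$.

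\textbf{Step 1: finite pieces.} Fix $y\in\mathcal A$, so $h^\infty(y,y)=\liminf_t h_t(y,y)=0$. Choose $t_k\to+\infty$ with $h_{t_k}(y,y)\to 0$, and curves $\gamma_k:[-t_k,0]\to\T^n$ from $y$ to $y$ with action within $1/k$ of $h_{t_k}(y,y)$. Reparametrize by translation so that the point $y$ sits at time $0$ and the curve is defined on $[-t_k/2,t_k/2]$; this works by concatenating a half of the loop with itself, since both ends are $y$. On any compact interval $[a,b]$, the actions of the restrictions are bounded above. For a subinterval $[s_1,s_2]$ this comes from
\[
A(\gamma_k|_{[s_1,s_2]})\le A(\gamma_k)-h_{\cdot}(\text{rest})\le h_{t_k}(y,y)+1/k-\inf h,
\]
where $h_t+c\,t$ is bounded below uniformly by superlinearity. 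Superlinearity and the de la Vallée-Poussin criterion then give uniform integrability of $\dot\gamma_k$ on $[a,b]$. Hence the $\gamma_k$ are equi-absolutely-continuous, and by a diagonal argument a subsequence converges uniformly on compacts to an absolutely continuous $\gamma:\R\to\T^n$ with $\gamma(0)=y$.

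\textbf{Step 2: the identity.} Tonelli lower semicontinuity of the action holds for continuous $L$ convex in $v$, via the Ioffe theorem. Applying it on $[a,b]$, and splitting each $\gamma_k$ into the three pieces $[-t_k/2,a]$, $[a,b]$, $[b,t_k/2]$, gives
\[
\int_a^b L+c\,ds + \liminf_k\Big(A(\gamma_k|_{[b,t_k/2]\cup[-t_k/2,a]})\Big)\le \lim_k h_{t_k}(y,y)=0 .
\]
The outer pieces join $\gamma_k(b)\to\gamma(b)$ to $\gamma_k(a)\to\gamma(a)$ in a long time $t_k-(b-a)$. Their action is therefore at least $h_{T}(\gamma_k(b),\gamma_k(a))\ge h^\infty(\cdot,\cdot)-o(1)$. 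This lower bound holds uniformly because $h_T\to h^\infty$ in the liminf sense, combined with equi-Lipschitz continuity of $h_T$ for $T\ge1$. Passing to the limit yields
\[
\int_a^b L(\gamma,\dot\gamma)+c\,ds+h^\infty(\gamma(b),\gamma(a))\le 0 .
\]
The reverse inequality is the general fact $A_{[a,b]}(\gamma)\ge h_{b-a}(\gamma(a),\gamma(b))\ge -h^\infty(\gamma(b),\gamma(a))$. This gives \eqref{eq:defn-stat}.

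\textbf{Step 3: $\gamma\subset\mathcal A$.} For any $s$, apply \eqref{eq:defn-stat} on $[s,s]$ through the triangle inequality, or more directly combine the identities on $[a,s]$ and $[s,b]$. This shows
\[
h^\infty(\gamma(s),\gamma(s))\le h^\infty(\gamma(s),\gamma(a))+h^\infty(\gamma(a),\gamma(s))\le h^\infty(\gamma(s),\gamma(a))+A_{[a,s]}=0 .
\]
Since $h^\infty(x,x)\ge0$ always, $\gamma(s)\in\mathcal A$.

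\textbf{Main obstacle.} The delicate step is the lower estimate of the outer pieces by $h^\infty$ in Step 2. The infimum over lengths $T\to\infty$ only controls a liminf, and merely continuous $L$ gives no regularity of minimizers. I would handle it through the uniform Lipschitz bound on $h_T$ for $T\ge1$, which only needs continuity and superlinearity, via straight-line connections over unit time. I would also use the known inequality $h_T(x,z)\ge -h^\infty(z,x)$, which avoids needing the liminf to be a limit.
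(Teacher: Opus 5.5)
Your argument is correct, and it takes a genuinely different route from the paper. The paper gives no proof of its own: it imports the statement from Davini--Siconolfi, whose framework for continuous convex coercive Hamiltonians (as far as I recall) is built on the intrinsic critical semidistance rather than on the liminf of $h_t$. Your construction is a direct variational one: near-minimizing loops at $y$ of duration $t_k\to\infty$, cyclically shifted so that $y$ sits at time $0$ and the two outer arcs glue at the common base point; compactness from superlinearity and de la Vall\'ee-Poussin; Ioffe lower semicontinuity on $[a,b]$; and comparison of the glued outer arc with $h^\infty(\gamma(b),\gamma(a))$. It works verbatim with the paper's own definitions ($L$ merely continuous, $\mathcal A=\{h^\infty(x,x)=0\}$ with $h^\infty$ a liminf), so it avoids translating between descriptions of $\mathcal A$; the citation only buys brevity. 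Note that the degenerate case $a=b$ of your Step 2, where the outer arc is the whole shifted loop, already gives $h^\infty(\gamma(s),\gamma(s))\le 0$, so Step 3 is immediate that way. The displayed chain in Step 3 instead uses $h^\infty(\gamma(a),\gamma(s))\le A_{[a,s]}$, which needs $\gamma(a)\in\mathcal A$. So it works only if you anchor at $a=0$, with the mirrored chain for $s<0$.

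The justification you propose for the step you flag does not work, although the step itself is fine. Straight-line connections over \emph{unit} time cannot give equi-Lipschitz bounds on $h_T$: they cost $h_1(x,x')$, which in general does not tend to $0$ as $x'\to x$. A true Lipschitz bound at fixed $T$ requires shortening time, hence a priori velocity bounds on minimizers, which is a nontrivial regularity statement for merely continuous $L$. You do not need it. Set $T_k=t_k-(b-a)$, $\rho_k=d(\gamma(b),\gamma_k(b))$ and $\rho_k'=d(\gamma_k(a),\gamma(a))$, and prepend and append unit-speed segments of durations $\rho_k,\rho_k'$ to the outer arc:
\[
h_{T_k+\rho_k+\rho_k'}\bigl(\gamma(b),\gamma(a)\bigr)\le A(\text{outer arc}_k)+C(\rho_k+\rho_k'),\qquad C:=\max_{x\in\T^n,\ |v|\le 1}|L(x,v)+c(H)|.
\]
Since $T_k+\rho_k+\rho_k'\to\infty$ and $\inf_{T\ge T_0}h_T\uparrow h^\infty$ pointwise as $T_0\to\infty$, this gives $\liminf_k A(\text{outer arc}_k)\ge h^\infty(\gamma(b),\gamma(a))$. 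A liminf is exactly what a lower bound needs, so there is no limit-versus-liminf issue. The inequality $h_T(x,z)\ge-h^\infty(z,x)$ is of no use in this step: it bounds the outer arc below by $-h^\infty(\gamma(a),\gamma(b))$, the wrong quantity, and is needed only for the reverse inequality. Likewise, the uniform lower bound on $h_t$ in Step 1 comes from $h_t(x,z)\ge -h^\infty(z,x)\ge-\max h^\infty$ (or from a critical subsolution). It does not come from superlinearity, which only yields a lower bound of order $-t$.
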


For any critical subsolution \(u\) of \eqref{H-J}, define
\begin{equation}\label{eq:Lu}
\mathcal{L}(u) := \{ (x,v) \in T\mathbb{T}^n \mid \partial^+ u(x,v) = L(x,v) + c(H) \},
\end{equation}
where \(\partial^+ u(x,v) = \sup \{ p(v) : p \in D^* u(x) \}\) denotes {\it Clarke upper directional derivative}.

\medskip
The \emph{Aubry set} is then defined as the intersection
\[
\widetilde{\mathcal{A}} := \bigcap_u \mathcal{L}(u),
\]
where the intersection is taken over all subsolutions $u$. The following lemma shows that the definition of the Aubry set $\widetilde{\mathcal{A}}$ is well-defined.

\begin{lem}\label{lem:welldefinedofA}
	The canonical projection $\pi: \widetilde{\mathcal{A}} \to \mathcal{A}$ is surjective. 
\end{lem}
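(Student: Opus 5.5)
Two things need to be checked. First, that $\pi(\widetilde{\mathcal A})\subset\mathcal A$. Second, and this is the main content, that every $y\in\mathcal A$ has a preimage. I would handle surjectivity through the static curves of Lemma \ref{lem:staticcurve}. Fix $y\in\mathcal A$ and let $\gamma:\mathbb R\to\mathcal A$ be a static curve with $\gamma(0)=y$.

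The first step shows that for every subsolution $u$ and almost every $s$, the pair $(\gamma(s),\dot\gamma(s))$ lies in $\mathcal L(u)$. Any subsolution satisfies the domination inequality $u(\gamma(b))-u(\gamma(a))\le \int_a^b L(\gamma,\dot\gamma)+c(H)\,ds$. It also satisfies $u(x)-u(z)\le h^\infty(z,x)$. Combining these with \eqref{eq:defn-stat} gives
\[
0\le \int_a^b \bigl(L+c(H)\bigr)\,ds-\bigl(u(\gamma(b))-u(\gamma(a))\bigr)\le -h^\infty(\gamma(b),\gamma(a))-\bigl(u(\gamma(b))-u(\gamma(a))\bigr)\le 0 .
\]
Hence $u(\gamma(b))-u(\gamma(a))=\int_a^b L+c(H)$ for all $a\le b$. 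The function $u\circ\gamma$ is absolutely continuous, since $u$ is Lipschitz by superlinearity. At a.e. $s$ its derivative equals $L(\gamma(s),\dot\gamma(s))+c(H)$.

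The next step compares this derivative with the Clarke derivative. For Lipschitz $u$ the one-sided derivative along $\gamma$ is bounded by $\partial^+u(\gamma(s),\dot\gamma(s))$. This holds because $\partial^+u(x,v)$ is the Clarke generalized directional derivative, $\max\{p(v):p\in\mathrm{co}\,D^*u(x)\}$, and it dominates the upper Dini derivative. For the reverse inequality I use that $u$ is a subsolution and $H$ is convex, so $H(x,p)\le c(H)$ for every $p\in D^*u(x)$, by continuity of $H$ and a.e. differentiability. Fenchel's inequality then gives $p(v)\le L(x,v)+H(x,p)\le L(x,v)+c(H)$. Putting the two bounds together yields equality, that is, $(\gamma(s),\dot\gamma(s))\in\mathcal L(u)$ for a.e. $s$.

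The main obstacle is that this holds only for a.e. $s$, and the null set depends on $u$, while the intersection runs over uncountably many $u$. It also does not yet give a vector over the specific point $y$. I would fix this by a compactness and closedness argument.

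First, note that $\mathcal L(u)$ is closed. The map $\partial^+u$ is upper semicontinuous, since $D^*u$ is an upper semicontinuous compact-valued map, and $L$ is continuous. Together with $\partial^+u\le L+c(H)$, this shows that $\mathcal L(u)=\{\partial^+u-L-c(H)\ge 0\}$ is closed.

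Second, the velocities $\dot\gamma$ are essentially bounded, uniformly. This follows from superlinearity of $L$ and the equality above, because $u$ is Lipschitz with a constant uniform over subsolutions.

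Third, I remove the dependence on $u$. I would use a single strict-type subsolution $u_0$, for which $\bigcap_u\mathcal L(u)=\mathcal L(u_0)$ on the relevant set. If such a $u_0$ is not available in the low-regularity setting, I would use a countable dense family of subsolutions instead. A sequence of subsolutions $u_k\to u$ uniformly has reachable gradients accumulating into $D^*u$, so the limiting inclusion persists.

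Finally, pick times $s_j\to 0$ that avoid the countable union of null sets. Then $(\gamma(s_j),\dot\gamma(s_j))$ is bounded, so a subsequence converges to some $(y,v)$. This limit lies in every closed $\mathcal L(u)$, which gives $(y,v)\in\widetilde{\mathcal A}$.

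For the inclusion $\pi(\widetilde{\mathcal A})\subset\mathcal A$, I would use Fathi's characterization. Off $\mathcal A$ there is a subsolution that is strict near the point, and strictness forces $\partial^+u<L+c(H)$ there.
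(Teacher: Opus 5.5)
Your skeleton is the paper's: a static curve through $y$, calibration of every subsolution along it, membership of $(\gamma(s),\dot\gamma(s))$ in $\mathcal L(u)$, and a limit velocity over $y$. Your Clarke/Fenchel sandwich, the closedness of $\mathcal L(u)$ and the velocity bound are correct, and they are more explicit than the paper's sketch.

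The gap is in your third step, exactly where you locate the difficulty. Neither of the two mechanisms you offer works as stated.

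\begin{itemize}
\item The identity $\bigcap_u\mathcal L(u)=\mathcal L(u_0)$ for a strict subsolution $u_0$ is asserted without proof. It requires that the equality-realizing reachable gradients of $u_0$ on $\mathcal A$ be reachable gradients of every subsolution. Combined with your single-$u$ argument, it already implies the lemma, so it cannot be taken for granted, least of all for merely continuous convex $H$.
\item The countable-dense-family route rests on the claim that if $u_k\to u$ uniformly, then reachable gradients of $u_k$ accumulate into $D^*u$. That is upper semicontinuity of $\partial^+$ under uniform convergence. It is false for general equi-Lipschitz sequences, because uniform convergence does not control gradients: on $\mathbb T^1$, $\frac{1}{2\pi k}\sin(2\pi kx)\to0$ uniformly while the derivative at $0$ stays equal to $1$. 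You give no reason why subsolutions at points of $\mathcal A$ would behave better.
\end{itemize}

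What you are missing is that the obstacle is not there: the exceptional null set is independent of $u$. Your first step already shows this if read carefully. Put $F(s):=L(\gamma(s),\dot\gamma(s))$, and let $S$ be the set of times at which $\gamma$ is differentiable and which are Lebesgue points of $F$. Then $S$ has full measure and depends only on $\gamma$. For $s\in S$ and every subsolution $u$ at once, calibration and the Lipschitz bound on $u$ give
\[
\lim_{h\to0^+}\frac{u(\gamma(s)+h\dot\gamma(s))-u(\gamma(s))}{h}=\lim_{h\to0^+}\frac{1}{h}\int_s^{s+h}\bigl(F(t)+c(H)\bigr)\,dt=F(s)+c(H).
\]
Hence $\partial^+u(\gamma(s),\dot\gamma(s))\ge L(\gamma(s),\dot\gamma(s))+c(H)$. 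With your Fenchel bound this gives $(\gamma(s),\dot\gamma(s))\in\widetilde{\mathcal A}$ for all $s\in S$.

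Now choose $s_j\to0$ in $S$. Your velocity bound and the closedness of $\widetilde{\mathcal A}$ (an intersection of closed sets) yield a limit $(y,v)\in\widetilde{\mathcal A}$. This is the rigorous content of the paper's brief remark that $\gamma$ does not depend on $u$.

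Your argument for $\pi(\widetilde{\mathcal A})\subset\mathcal A$, via subsolutions strict off $\mathcal A$, is fine and covers a point the paper leaves implicit.
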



\begin{proof}
This result has already been established in Theorem B.14 of \cite{DaviniFathi2016}, and we only sketch the main ideas of the proof here (see also \cite{Davini06,Fathi05cv} for related conclusions).
For each critical subsolution \( u \) and any \( x \in \mathcal{A} \), by Lemma \ref{lem:staticcurve}, there exists a static curve 
\( \gamma : \mathbb{R} \to \mathbb{T}^n \) with \( \gamma(0) = x \). Along this curve, $u$ is calibrated, that is,
\[
u(\gamma(b)) - u(\gamma(a)) 
= \int_a^b \big( L(\gamma(s), \dot\gamma(s)) + c(H) \big)\, ds, 
\quad \forall a<b.
\]
By combining this property with the definition of static curve, one deduces that, for almost every differentiability point $s$ of $\gamma$, the equality in \eqref{eq:Lu} is satisfied, that is,
\[
(\gamma(s), \dot \gamma(s)) \in \cL(u).
\]
By continuity of $\gamma$ and the compactness of $\mathcal{L}(u)$, its trajectory  entirely lies in the projection $\pi(\mathcal{L}(u))$.

At a non-differentiable point $s_0$, there exists a sequence of differentiable points $s_n\to s_0$ such that
\[
\partial^+u(\gamma(s_n), \dot\gamma(s_n))
= L(\gamma(s_n), \dot\gamma(s_n)) + c(H).
\]
By the continuity of $L$ and of $u$ along $\gamma$, we deduce that 
$(\gamma(s_0), \lim_{n\to\infty}\dot\gamma(s_n))$ satisfies \eqref{eq:Lu}. Finally, note that $\gamma$ does not depend on the choice of $u$, which shows that each $\mathcal{L}(u)$ contains a common nonempty compact subset independent of $u$. This proves the surjectivity of $\pi$. 
\end{proof}

\begin{defn}[Mather measure]\label{Mathermeasure}
    A probability measure \(\mu\in\cP(T\mathbb{T}^n)\) is \emph{closed} if \(\int |v|\,d\mu < \infty\) and \(\int d\phi(x)\cdot v \, d\mu(x,v) = 0\) for all \(\phi\in C^1(\mathbb{T}^n,\mathbb{R})\). Let \(\mathfrak{P}_0\) be the set of closed measures. The Ma\~n\'e critical value satisfies
    \begin{equation}\label{mane critical value}
    \min_{\mu \in \mathfrak{P}_0} \int L(x,v) \, d\mu(x,v) = -c(H).
    \end{equation}
    Any minimizer \(\mu\) in \eqref{mane critical value} is called a \emph{Mather measure} (associated with \(L\)). Denote by \(\widetilde{\mathfrak{M}}\) the set of all Mather measures.
\end{defn}

The set of \emph{projected Mather measures} is defined by
\[
\mathfrak{M} := \pi_{\#} \widetilde{\mathfrak{M}} 
= \{\, \pi_{\#} \widetilde{\mu} \mid \widetilde{\mu} \in \widetilde{\mathfrak{M}} \,\},
\]
where $\pi_{\#}$ denotes the pushforward operator associated with the canonical projection 
$\pi : T\mathbb{T}^n \to \mathbb{T}^n$. Equivalently, for any $f \in C(\mathbb{T}^n, \mathbb{R})$,
\[
\int_{\mathbb{T}^n} f(x) \, d(\pi_{\#} \widetilde{\mu})(x) 
= \int_{T\mathbb{T}^n} (f \circ \pi)(x,v) \, d\widetilde{\mu}(x,v).
\]
The \emph{Mather set} and the \emph{projected Mather set} are respectively defined as follows:
\begin{align} \label{def:matherset}
	\widetilde{\mathcal{M}} 
	:= \overline{\bigcup_{\widetilde{\mu} \in \widetilde{\mathfrak{M}}} 
		\mathrm{supp}(\widetilde{\mu})}, 
	\qquad 
	\mathcal{M} := \pi(\widetilde{\mathcal{M}}) \subset \mathbb{T}^n.
\end{align}

\begin{lem}\cite[Corollary~10.3]{Fathi05cv}\label{lem:mathersetisinAubry}
    \(\widetilde{\mathcal{M}}\subset\widetilde{\mathcal{A}}\).
\end{lem}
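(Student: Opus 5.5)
We need to show that every Mather measure $\widetilde\mu$ is supported in $\mathcal{L}(u)$ for every critical subsolution $u$, since $\mathcal{L}(u)$ is closed and $\widetilde{\mathcal{A}}=\bigcap_u\mathcal{L}(u)$. Taking closures in \eqref{def:matherset} then gives $\widetilde{\mathcal{M}}\subset\widetilde{\mathcal{A}}$.

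\textbf{Step 1: pointwise inequality.} Fix a critical subsolution $u$. It is Lipschitz, by superlinearity of $H$. At every differentiability point $x$ we have $H(x,du(x))\le c(H)$. By Fenchel's inequality,
\[
du(x)(v)\le L(x,v)+H(x,du(x))\le L(x,v)+c(H).
\]
Passing to limits of reachable gradients, and using continuity of $L$ and $H$, gives $p(v)\le L(x,v)+c(H)$ for all $p\in D^*u(x)$. Hence
\[
\partial^+u(x,v)\le L(x,v)+c(H)\quad\text{everywhere.}
\]
So $\mathcal{L}(u)$ is exactly the set where the nonnegative function $L+c(H)-\partial^+u$ vanishes. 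This function is lower semicontinuous, because $\partial^+u$ is upper semicontinuous: the set-valued map $D^*u$ has closed graph and bounded values. Therefore $\mathcal{L}(u)$ is closed.

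\textbf{Step 2: integral identity.} For a Mather measure $\widetilde\mu$ we have $\int (L+c(H))\,d\widetilde\mu=0$ by \eqref{mane critical value}. It therefore suffices to show
\[
\int \partial^+u\,d\widetilde\mu\ \ge\ 0.
\]
Combined with Step 1, this forces $L+c(H)-\partial^+u=0$ $\widetilde\mu$-a.e. Lower semicontinuity of this function then puts $\mathrm{supp}\,\widetilde\mu$ inside its zero set $\mathcal{L}(u)$.

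To prove the inequality, mollify: set $u_\delta=u*\rho_\delta$, which is a $C^1$ function. Closedness of $\widetilde\mu$ gives $\int du_\delta(x)(v)\,d\widetilde\mu=0$.

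\textbf{Step 3: the limit $\delta\to0$ (the main obstacle).} We have
\[
du_\delta(x)=\int du(x-y)\rho_\delta(y)\,dy,
\]
an average of gradients at differentiability points near $x$. So $du_\delta(x)(v)$ lies below $\sup\{p(v): p \text{ a gradient of } u \text{ at differentiability points within } \delta \text{ of } x\}$. As $\delta\to0$, the limsup of this supremum is at most $\partial^+u(x,v)$, because limits of such gradients are reachable gradients at $x$ (Clarke's characterization of the generalized gradient as the convex hull of $D^*u$).

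The integrands are dominated by $\mathrm{Lip}(u)|v|$, which is $\widetilde\mu$-integrable. Reverse Fatou therefore gives
\[
0=\limsup_{\delta\to0}\int du_\delta(x)(v)\,d\widetilde\mu\le\int\partial^+u\,d\widetilde\mu.
\]
This is exactly what Step 2 required.

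The delicate point is this upper-semicontinuity and limit argument. It is the only place where the low regularity of $L$ and $u$ matters. Closedness of $\widetilde\mu$ is stated only against $C^1$ test functions, which is why the argument must pass through mollification rather than test closedness with $u$ directly.
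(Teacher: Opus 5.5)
Your proof is correct. Since the paper gives no argument of its own and only imports the statement from Fathi--Siconolfi, yours is an independent, self-contained route.

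\textbf{The usual route.} Inclusions of this kind are usually proved through a critical subsolution that is strict outside $\mathcal{A}$. One regularizes it by convolution, with convexity of $H$ in $p$ and Jensen's inequality keeping it an approximate subsolution, and tests it against the closed measure. This yields the projected inclusion $\pi(\mathrm{supp}\,\widetilde\mu)\subset\mathcal{A}$. It rests on a nontrivial existence theorem for strict subsolutions. To reach the tangent-bundle statement with $\widetilde{\mathcal{A}}=\bigcap_u\mathcal{L}(u)$, one must still control every critical subsolution, and in this low-regularity setting these need not be differentiable on $\mathcal{A}$.

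\textbf{Your route.} You treat each Lipschitz subsolution directly. Fenchel's inequality and closedness of the graph of $D^*u$ give $\partial^+u\le L+c(H)$, with $L+c(H)-\partial^+u$ lower semicontinuous. Mollification plus reverse Fatou then give $\int\partial^+u\,d\widetilde\mu\ge 0$. This needs only continuity of $L$ and $\int|v|\,d\widetilde\mu<\infty$. It uses no strictness and no convexity beyond what is already built into the identity $\min\int L=-c(H)$. It also matches exactly the definition of $\widetilde{\mathcal{A}}$ adopted in this paper, so nothing depends on reconciling conventions with the cited source.

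\textbf{What it does not give.} Your argument says nothing about the projected Aubry set defined through the Peierls barrier. Deducing $\mathcal{M}\subset\mathcal{A}$ from your inclusion requires $\pi(\widetilde{\mathcal{A}})\subset\mathcal{A}$, which needs an extra input such as a critical subsolution strict outside $\mathcal{A}$.

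\textbf{A minor simplification.} In Step 3 you do not need Clarke's convex-hull characterization. The convolution integral ignores the point $x$ itself, so only gradients at differentiability points $z\neq x$ enter, and their limits lie in $D^*u(x)$ by definition.
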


\begin{rmk}
If \(H\) (and hence \(L\)) is \(C^2\) and strictly convex, it is called a {Tonelli Hamiltonian/Lagrangian}. In this case, Mather measures are invariant under the Euler-Lagrange flow \cite{Mather91}
\end{rmk}

\medskip

Before concluding this section, we  recall the \(1\)-Wasserstein distance (also known as the Kantorovich-Rubinstein distance). For \(\mu,\nu\in\mathcal{P}_1(T\mathbb{T}^n)\), it is defined as
\begin{equation}\label{eq:Wasserstein}
W_1(\mu,\nu) = \inf_{\pi\in\Pi(\mu,\nu)} \int_{\mathbb{T}^n\times\mathbb{T}^n} d(x,y) \, d\pi(x,y),
\end{equation}
where \(\Pi(\mu,\nu)\) is the set of couplings. A useful dual formulation is
\begin{equation}\label{eq:KRdistance}
W_1(\mu,\nu) = \sup_{\psi\in\mathrm{Lip}_1(\mathbb{T}^n)} \left\{ \int \psi \, d\mu - \int \psi \, d\nu \right\},
\end{equation}
where \(\mathrm{Lip}_1\) denotes 1-Lipschitz functions. See \cite{VillaniOT} for details.

\begin{rmk}\label{rmk:optimaltransport}
This distance is related to the so-called Monge's problem.
Given two probability measures $\mu$ and $\nu$ on metric spaces $X$ and $Y$, respectively, and a cost function $c:X\times Y\to\mathbb{R}$, Monge's problem consists in finding a measurable map $T:X\to Y$ that pushes $\mu$ forward to $\nu$ (i.e. $T_\#\mu=\nu$) and minimizes the total  cost
\begin{equation}\label{eq:Monge}
	\inf_{T_\#\mu=\nu}\int_X c(x,T(x))\,d\mu(x).
\end{equation}
A transport map ({\it i.e.}, a map achieving the infimum) may not exist in general, which motivates Kantorovich's relaxation or Kantorovich's problem:
let \(\Pi(\mu, \nu)\) be the set of probability measures \(\pi\) on \(X \times Y\) with marginals \(\mu\) and \(\nu\). The relaxed problem reads  
\begin{equation}\label{eq:Kantorovich}
	\inf_{\pi \in \Pi(\mu, \nu)} \int_{X \times Y} c(x, y) \, d\pi(x, y).
\end{equation}
Wasserstein distance arises naturally when the cost function is chosen as a power of the metric. More specifically,
for $p\in[1,\infty)$, let $\cP_p(X)$ be the space of Borel probability measures on $(X,d)$ having finite $p$-th moment.
The \(p\)-Wasserstein distance between \(\mu, \nu \in \mathcal{P}_p(X)\) is given by:  
\begin{equation}\label{eq:Wasserstein}
	W_p(\mu, \nu) := \left( \inf_{\pi \in \Pi(\mu, \nu)} \int_{X \times X} d(x, y)^p \, d\pi(x, y) \right)^{1/p}.
\end{equation}
\end{rmk}

\bigskip

\section{Statistical regularity of the Mather measures for Ma\~n\'e's Lagrangians}\label{Sec:Mane L}

We now consider a family of Ma\~n\'e's Lagrangians \( L_\delta : T\mathbb{T}^n  \to \mathbb{R} \), defined by
\begin{equation}\label{eq:ManeL}
L_\delta(x, v) := \frac{1}{2} \left| v - V_\delta(x) \right|^2,
\end{equation}
where \( \delta \geq 0 \) is a small parameter and \( V_\delta : \mathbb{T}^n \to \mathbb{R}^n \) is a family of Lipschitz vector fields satisfying the following assumptions:
\begin{enumerate}[label=(A\arabic*),ref=(A\arabic*)]
	\item\label{ass:A1} $V_0(x) = \omega \in \mathbb{R}^n$ is a non-resonant vector.
	\item\label{ass:A2} $\omega$ satisfies one of the following two conditions:
	\begin{enumerate}[label=(\roman*)]
		\item $\omega \in \mathcal{D}_{\sigma,\tau}$ is a finite Diophantine vector;
		\item $\omega = (\omega_1,\ldots,\omega_n)$ is a vector whose components $\omega_i$ are all algebraic for $i = 1,\ldots,n$.
	\end{enumerate}
	\item\label{ass:A3}
	For \(0\le\delta\ll1\), we have
	\[
	\sup_{x\in\mathbb{T}^n} \lvert V_\delta(x)-\omega\rvert \le \delta.
	\]
\end{enumerate}

\noindent
It is worth noting that, whether (i) or (ii) in Assumption~\ref{ass:A2} holds, the corresponding vectors always form a prevalent subset of~$\mathbb{R}^n$.
Indeed, for case~(i), if $\tau>n-1$, the Diophantine set $\mathcal{D}_{\sigma,\tau}$ has asymptotically full Lebesgue measure as $\sigma\to0$.
For case~(ii), Lemma 2.10 in \cite{ZhangCPDE} shows that the set of non-resonant vectors whose components are algebraic also has full measure. We also point out that for  $n=2$, thanks  to \textit{Roth's theorem} (see \cite{Cassels1957}), any non-resonant vector $\omega$ with all algebraic components necessarily satisfies a finite Diophantine condition, i.e., $\omega\in \mathcal{D}_{\sigma,\tau}$ for suitable $\sigma\in[1,2)$; however, for $n\geq 3$, we do not know whether (ii) implies (i).

\begin{lem}\label{invarianceflow} 
	Let \(\mathfrak{M}_\delta\) denote the set of projected Mather measures associated with \(L_\delta\).  
	For any \(\mu_\delta \in \mathfrak{M}_\delta\), one has
	\[
	(\varphi_\delta^t)_\# \mu_\delta = \mu_\delta,
	\qquad \forall\, t \in \mathbb{R},
	\]
	where \(\varphi_\delta^t : \mathbb{T}^n \to \mathbb{T}^n\) denotes the flow generated by the ODE
	\begin{equation}\label{ODEfor t}
		\dot{x} = V_\delta(x), \qquad t \in \mathbb{R}.
	\end{equation}
	In other words, every \(\mu_\delta \in \mathfrak{M}_\delta\) is invariant under the flow \(\varphi_\delta^t\). Consequently, \(\widetilde{\mathfrak{M}}_\delta\) is nonempty. 
\end{lem}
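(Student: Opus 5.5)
Since $L_\delta\ge 0$ and vanishes exactly on the graph $\{v=V_\delta(x)\}$, the plan is to show first that $c(H_\delta)=0$, then that every Mather measure lives on that graph, and finally to read off invariance from the closedness condition.

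\emph{Step 1: $c(H_\delta)=0$.} The Hamiltonian is $H_\delta(x,p)=\frac12|p|^2+\langle p,V_\delta(x)\rangle$. Constant functions are subsolutions at level $0$, since $H_\delta(x,0)=0$; this gives $c(H_\delta)\le 0$. For the reverse inequality, $L_\delta\ge0$ implies $\int L_\delta\,d\mu\ge0$ for every closed measure $\mu$. By \eqref{mane critical value} we get $-c(H_\delta)\ge 0$, that is, $c(H_\delta)\le0$. To obtain equality we need a closed measure with zero action, namely a measure supported on the graph.

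\emph{Step 2: existence of such a measure.} The flow $\varphi^t_\delta$ of \eqref{ODEfor t} is well defined because $V_\delta$ is Lipschitz. By Krylov--Bogoliubov it admits an invariant probability measure $\nu$ on $\T^n$. Set $\tilde\nu:=(\mathrm{id},V_\delta)_\#\nu$. This measure is closed, because for $\phi\in C^1$
\[
\int d\phi(x)\cdot V_\delta(x)\,d\nu=\frac{d}{dt}\Big|_{t=0}\int\phi\circ\varphi^t_\delta\,d\nu=0 .
\]
Its action is $\int L_\delta\,d\tilde\nu=0$. Hence $\min_{\mathfrak P_0}\int L_\delta=0$, so $c(H_\delta)=0$, $\tilde\nu$ is a Mather measure, and $\widetilde{\mathfrak M}_\delta\neq\emptyset$.

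\emph{Step 3: invariance.} Let $\tilde\mu\in\widetilde{\mathfrak M}_\delta$. Then $\int L_\delta\,d\tilde\mu=0$ with $L_\delta\ge0$, so $\tilde\mu$ is supported on the closed set $\{v=V_\delta(x)\}$, and hence $\tilde\mu=(\mathrm{id},V_\delta)_\#\mu_\delta$ with $\mu_\delta=\pi_\#\tilde\mu$. Closedness now reads
\[
\int \langle d\phi(x),V_\delta(x)\rangle\,d\mu_\delta=0\qquad\forall\,\phi\in C^1(\T^n,\R).
\]
Apply this to $\phi_t:=\phi\circ\varphi^t_\delta$. We need $\phi_t$ to be differentiable, or we need an approximation argument. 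Using $d\phi_t(x)\cdot V_\delta(x)=\frac{d}{dt}\,\phi(\varphi^t_\delta x)$, we get
\[
\frac{d}{dt}\int\phi\circ\varphi^t_\delta\,d\mu_\delta=\int \langle d\phi,V_\delta\rangle\circ\varphi^t_\delta\,d\mu_\delta =0,
\]
where the middle quantity is the closedness identity applied to $\phi_t$. Thus $\int\phi\circ\varphi^t_\delta\,d\mu_\delta=\int\phi\,d\mu_\delta$ for all $t$ and all $\phi\in C^1$. By density of $C^1$ in $C^0$, this gives $(\varphi^t_\delta)_\#\mu_\delta=\mu_\delta$.

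\emph{Main obstacle.} The delicate point is regularity in Step 3. Since $V_\delta$ is only Lipschitz, $\phi\circ\varphi^t_\delta$ is Lipschitz but need not be $C^1$. The first remedy is to mollify the vector field, $V^\epsilon:=V_\delta*\rho_\epsilon$, which converges uniformly to $V_\delta$ with a uniform Lipschitz bound, so that $\varphi^{t}_{\epsilon}\to\varphi^t_\delta$ uniformly on compact time intervals. Then $\phi\circ\varphi^t_\epsilon$ is $C^1$, and
\[
\frac{d}{dt}\int\phi\circ\varphi^t_\epsilon\,d\mu_\delta=\int\langle d(\phi\circ\varphi^t_\epsilon),V^\epsilon\rangle\,d\mu_\delta =\int\langle d(\phi\circ\varphi^t_\epsilon),V^\epsilon-V_\delta\rangle\,d\mu_\delta ,
\]
where the second equality subtracts the closedness identity. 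This is bounded by $C\|V^\epsilon-V_\delta\|_\infty\to0$, using $\|d(\phi\circ\varphi^t_\epsilon)\|_\infty\le \|d\phi\|_\infty e^{\mathrm{Lip}(V^\epsilon)|t|}$, which is uniform in $\epsilon$. Passing to the limit $\epsilon\to0$ yields the claim.

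Alternatively, one can test closedness directly with Lipschitz functions, since $\mu_\delta$-a.e. differentiability is not guaranteed. The mollification route avoids that issue.
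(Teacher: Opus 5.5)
Your proof is correct. It follows the same three-step skeleton as the paper: existence of a zero-action closed measure, then the graph property, then invariance by testing closedness against $\phi\circ\varphi^t_\delta$. Two of the steps are carried out differently.

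\emph{Graph property.} The paper uses weak KAM theory. Since $u\equiv 0$ is a smooth solution, $\widetilde{\mathcal A}_\delta\subset\mathcal L(0)=\mathrm{graph}(V_\delta)$, and it then invokes $\widetilde{\mathcal M}\subset\widetilde{\mathcal A}$ for continuous Lagrangians. You get the same conclusion directly from $L_\delta\ge 0$, $L_\delta^{-1}(0)=\{v=V_\delta(x)\}$ and zero action. This is more elementary and self-contained.

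\emph{Invariance.} The paper applies the closedness identity to $\phi_s=\phi\circ\varphi^s_\delta$ and uses $D\phi_s(x)\cdot V_\delta(x)=D\phi(\varphi^s_\delta x)\cdot V_\delta(\varphi^s_\delta x)$, tacitly treating $\phi_s$ as $C^1$. When $V_\delta$ is only Lipschitz, the flow is merely bi-Lipschitz. Moreover $\mu_\delta$ may be singular (e.g.\ supported on a periodic orbit), so Rademacher's theorem does not rescue the argument. Your mollification argument, with the uniform bound $\|d(\phi\circ\varphi^t_\epsilon)\|_\infty\le\|d\phi\|_\infty e^{\mathrm{Lip}(V_\delta)|t|}$ and Gronwall convergence of the flows, is complete. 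It fills a gap that the paper's argument leaves open.

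\emph{Existence.} Your Krylov--Bogoliubov measure on $\T^n$, lifted by $(\mathrm{id},V_\delta)$, is equivalent to the paper's empirical measures along an orbit.

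Two cosmetic points. In Step 1, the ``reverse inequality'' you derive is again $c(H_\delta)\le 0$, so the constant-subsolution remark is redundant; equality comes from Step 2 together with $L_\delta\ge 0$. Also, your final sentence presumably means that one \emph{cannot} test closedness directly with Lipschitz functions.
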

\begin{proof}

\medskip
\noindent
\textbf{Step 1: Existence of Mather measures.} 
Since $L_\delta$ is of Mañé type, its critical value is $c(H_\delta) = 0$, i.e., $\inf_{\tilde{\mu} \in \mathfrak{P}_0} \int_{T\mathbb{T}^n} L_\delta \, d\tilde{\mu} = 0$. For any initial point \(x \in \mathbb{T}^n\), there exists an integral curve \(\gamma : [0,T] \to \mathbb{T}^n\) solving equation \eqref{ODEfor t} with \(\gamma(0) = x\).
Using this curve \(\gamma\), we define a probability measure \(\tilde{\mu}_\gamma\) on the tangent bundle \(T\mathbb{T}^n\) by
\[
\int_{T\mathbb{T}^n} \psi(x,v) \, d\tilde{\mu}_\gamma(x,v) := \frac{1}{T} \int_0^T \psi(\gamma(t), \dot{\gamma}(t)) \, dt,
\]
for any bounded measurable function \(\psi : T\mathbb{T}^n \to \mathbb{R}\).

We have
\[
\int_{T\mathbb{T}^n} |v| \, d\tilde{\mu}_\gamma(x,v) = \frac{1}{T} \int_0^T |\dot{\gamma}(t)| \, dt = \frac{\ell_g(\gamma)}{T} < +\infty,
\]
where $\ell_g(\gamma)$ denotes the Riemannian length of $\gamma$, which is finite since $\gamma$ is absolutely continuous on the compact interval $[0,T]$.
Moreover, for any $f \in C^1(\mathbb{T}^n,\mathbb{R})$,
\[
\int_{T\mathbb{T}^n} df_x(v) \, d\tilde{\mu}_\gamma(x,v) = \frac{1}{T} \int_0^T \frac{d}{dt} f(\gamma(t)) \, dt = \frac{f(\gamma(T)) - f(\gamma(0))}{T}.
\]
Taking a sequence \(T_k \to \infty\), by the compactness of the space of probability measures and the Banach-Alaoglu theorem, there exists a subsequence \(\tilde{\mu}_{\gamma_{k}}\) converging in the weak\(^*\) topology to a probability measure \(\tilde{\mu}_\infty\).
Moreover, since $\gamma(t)$ is an integral curve of \eqref{ODEfor t}, we have
\[
\int_{T\mathbb{T}^n} L_\delta(x,v) \, d\tilde{\mu}_\infty(x,v) = 0,
\]
which implies \(\tilde{\mu}_\infty\) is a minimizing Mather measure associated with \(L_\delta\).
Therefore, the Mather measure set is nonempty, i.e., $\widetilde{\mathfrak{M}}_\delta \neq \emptyset$.

\medskip
\noindent
 \textbf{Step 2: Graph property of the Mather set.} 
Since $u\equiv 0$ is a smooth solution of the Mañé-type Hamilton-Jacobi equation, the Aubry set $\widetilde{\mathcal{A}}_\delta$ satisfies
\[
\widetilde{\mathcal{A}}_\delta \subset \mathcal{L}(0) = \{(x,V_\delta(x)) \mid x \in \mathcal{A}_\delta\}.
\]
By Lemma \ref{lem:mathersetisinAubry}, the Mather set is contained in the Aubry set. Together with the inclusion above, we conclude that  
\[
\widetilde{\mathcal{M}}_\delta = \{(x,V_\delta(x)) \mid x \in \mathcal{M}_\delta \subset \mathcal{A}_\delta\}.
\]

\medskip
\noindent
\textbf{Step 3: Invariance of projected Mather measures.} 
For any \(\mu_\delta \in \mathfrak{M}_\delta\), the graph property of \(\widetilde{\mathcal{M}}_\delta\) implies that  
\begin{equation}\label{eq:mathermeasuregraph}
	\int_{T\mathbb{T}^n} \Phi(x, v) \, d\tilde{\mu}_\delta(x, v) 
	= \int_{\mathbb{T}^n} \Phi\big(x, V_\delta(x)\big) \, d\mu_\delta(x), 
	\quad \forall \Phi \in C_c(T\mathbb{T}^n, \mathbb{R}).
\end{equation}

Since \(\tilde{\mu}_\delta\) is a closed measure, for any \(\phi \in C^1(\mathbb{T}^n, \mathbb{R})\) we have
\begin{equation}\label{eq:closed_measure}
	\int_{T\mathbb{T}^n} d\phi_x(v) \, d\tilde{\mu}_\delta(x, v) = 0.
\end{equation}
By taking \(\Phi(x,v) = d\phi_x(v)\) in~\eqref{eq:mathermeasuregraph} and combining with~\eqref{eq:closed_measure}, it follows that
\begin{equation}\label{eq:invariance_integral}
	\int_{\mathbb{T}^n} d\phi_x \big( V_\delta(x) \big) \, d\mu_\delta(x) = 0,
	\quad \forall \phi \in C^1(\mathbb{T}^n, \mathbb{R}).
\end{equation}

Let \(\phi \in C^1(\mathbb{T}^n,\mathbb{R})\). We compute
\[
\begin{aligned}
	\int_{\mathbb{T}^n} \phi \, d \big( (\varphi_\delta^t)_\# \mu_\delta - \mu_\delta \big) 
	&= \int_{\mathbb{T}^n} \big( \phi(\varphi_\delta^t(x)) - \phi(x) \big) \, d\mu_\delta(x) \\
	&= \int_{\mathbb{T}^n} \int_0^t \frac{d}{ds} \phi(\varphi_\delta^s(x)) \, ds \, d\mu_\delta(x) \\
	&= \int_{\mathbb{T}^n} \int_0^t D\phi(\varphi_\delta^s(x)) \cdot V_\delta(\varphi_\delta^s(x)) \, ds \, d\mu_\delta(x),
\end{aligned}
\]
where we have applied the fundamental theorem of calculus, the chain rule, and the definition of the flow \(\varphi_\delta^t\).

Set \( \phi_s(x) := \phi(\varphi_\delta^s(x)) \) for each fixed \( s \in [0,t] \).  
By~\eqref{eq:invariance_integral} applied to \(\phi_s\), we have
\[
\int_{\mathbb{T}^n} D\phi_s(x) \cdot V_\delta(x) \, d\mu_\delta(x) = 0.
\]
Thus,
\[
\int_{\mathbb{T}^n} \phi \, d \big( (\varphi_\delta^t)_\# \mu_\delta - \mu_\delta \big) 
= \int_0^t \int_{\mathbb{T}^n} D\phi_s(x) \cdot V_\delta(x) \, d\mu_\delta(x) \, ds = 0.
\]
Since this holds for every \(\phi \in C^1(\mathbb{T}^n,\mathbb{R})\), we conclude that \(\mu_\delta\) is invariant under the flow \(\varphi_\delta^t\).

Moreover, by the density of \(C^1(\mathbb{T}^n,\mathbb{R})\) in \(\mathrm{Lip}(\mathbb{T}^n,\mathbb{R})\) with respect to the uniform norm and the dominated convergence theorem, the same conclusion extends to all Lipschitz test functions:
\[
\int_{\mathbb{T}^n} \phi \, d \big( (\varphi_\delta^t)_\# \mu_\delta - \mu_\delta \big) = 0,
\quad \forall \phi \in \mathrm{Lip}(\mathbb{T}^n,\mathbb{R}).
\]
The proof of Lemma \ref{invarianceflow} is now complete. 
\end{proof}

Due to Assumptions~\ref{ass:A1}--\ref{ass:A3}, \(\widetilde{\mathfrak{M}}_\delta\) (resp. \(\mathfrak{M}_\delta\)) may contain multiple elements, while \(\widetilde{\mathfrak{M}}_0\) (resp. \(\mathfrak{M}_0\)) is a singleton. As we are concerned with the Wasserstein distance between any Mather measure of \(L_\delta\) and that of \(L_0\), we shall, without risk of confusion, write \(\widetilde{\mathfrak{M}}_\delta\) (resp.~\(\mathfrak{M}_\delta\)) to indicate an arbitrary choice of measure from the respective set.

\subsection{Upper bound}
In this subsection, we establish an upper bound for the Wasserstein distance between Mather measures under perturbations. Our analysis relies on a quantitative ergodic lemma, which will serve as a key technical ingredient:
\begin{lem}\label{lem1}
	Let \( F \in C^{0, \alpha}(\mathbb{T}^n) \) with \( n \geq 2 \).  
	\begin{enumerate}
	\item[(i)] [Theorem~3 in \cite{Klein2021ETDS}]  Let $\omega = (\omega_1, \ldots, \omega_n) \in \mathcal{D}_{\sigma,\tau}$. 
	There exists a constant \(C = C\bigl(n, \sigma,\tau, \|F\|_{C^{0,\alpha}}\bigr)\) such that
	\[
	\left| \frac{1}{T} \int_0^T F(x + \omega t) \, dt 
	- \int_{\mathbb{T}^n} F({x}) \, d {x} \right| 
	\leq C \, T^{-\frac{\alpha}{\tau + n}}.
	\]
	\item[(ii)] [Proposition~2.12 in \cite{ZhangCPDE}] Let \( \omega = (\omega_1, \ldots, \omega_n) \in \mathbb{R}^n \) be non-resonant with \( \omega_i \) algebraic for \( i = 1, \ldots, n \). For every \( 0 < \varepsilon \ll 1 \), there exists a constant \( C = C(n, \omega, \|F\|_{C^{0, \alpha}}, \varepsilon) \) such that:
	\[
	\left| \frac{1}{T} \int_0^T F(x + \omega t) \, dt - \int_{\mathbb{T}^n} F({x}) \, d{x} \right| \leq C_\varepsilon T^{-\frac{\alpha}{\alpha + n - 1+ \varepsilon} }.
	\]
	\item[(iii)] [Proposition~2.12 in \cite{ZhangCPDE}] If \( n = 2 \), \( \omega = (\omega_1, \omega_2) \), and the Diophantine index \( \tau_{\omega} = 1 \), then there exists a constant \( C = C(\tau, \|F\|_{C^{0, \alpha}}) \) such that:
	\[
	\left| \frac{1}{T} \int_0^T F(x + \omega t) \, dt - \int_{\mathbb{T}^n} F({x}) \, d{x} \right| \leq C T^{-\frac{\alpha}{\alpha + 1}}.
	\]
	\end{enumerate}
\end{lem}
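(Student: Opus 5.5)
The approach is Fourier truncation plus smoothing, with the Diophantine condition entering through small divisors. Write $F=\sum_{k\in\Z^n}\hat F_k e^{2\pi i\langle k,x\rangle}$ and $\bar F=\hat F_0=\int_{\T^n}F$. I will take $x$ to be arbitrary and aim for bounds uniform in $x$. Fix a cutoff $N\ge1$ and replace $F$ by a smooth approximation $F_N=F*\rho_{1/N}$, where $\rho$ is a standard mollifier, or alternatively a Fejér/Jackson-type kernel of degree $N$. Hölder continuity gives
\[
\|F-F_N\|_\infty\le C\|F\|_{C^{0,\alpha}}N^{-\alpha}.
\]
Since averaging along the orbit and averaging over $\T^n$ are both $1$-Lipschitz with respect to the sup norm, this error enters the final estimate only as $2CN^{-\alpha}$.

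For $F_N$ the average of each nonzero mode can be computed explicitly:
\[
\frac1T\int_0^T e^{2\pi i\langle k,x+\omega t\rangle}dt
=\frac{e^{2\pi i\langle k,x\rangle}\bigl(e^{2\pi i\langle k,\omega\rangle T}-1\bigr)}{2\pi i\langle k,\omega\rangle T},
\]
so its modulus is at most $(\pi|\langle k,\omega\rangle|T)^{-1}\le |k|^\tau/(\pi\sigma T)$ by \eqref{def:Diophantine in Rn}. The Fourier coefficients of a Jackson-kernel smoothing are supported in $|k|\le N$ and are bounded by $\|F\|_\infty$. Summing over these roughly $N^n$ lattice points gives
\[
\Bigl|\frac1T\int_0^TF_N(x+\omega t)\,dt-\bar F\Bigr|\le C\,\frac{N^{n+\tau}}{\sigma T}.
\]
Combining the two pieces yields the total bound $C\bigl(N^{-\alpha}+N^{n+\tau}T^{-1}\bigr)$. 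Optimizing by choosing $N=T^{1/(n+\tau+\alpha)}$ produces $T^{-\alpha/(n+\tau+\alpha)}$, which is already at least as good as the exponent $\alpha/(\tau+n)$ claimed in (i), since $\alpha\le1$. Alternatively, the cruder choice $N=T^{1/(n+\tau)}$ gives exactly the stated rate, with the $N^{-\alpha}$ term dominating. In either case the constant depends only on $n,\sigma,\tau,\|F\|_{C^{0,\alpha}}$, which settles (i).

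For (ii) and (iii), the same scheme applies, but the count of small divisors has to be sharpened. For (iii), with $n=2$ and $\tau_\omega=1$, I would reduce to the circle rotation by $\nu=\omega_2/\omega_1$ by taking a section of the linear flow. I would then use Denjoy–Koksma along continued-fraction denominators $q_j$: a Hölder function has discrepancy $\lesssim q_j^{-\alpha}$ per block, and the growth $q_{j+1}\lesssim q_j^{1+\epsilon}$ comes from index $1$. Balancing block lengths gives the exponent $\alpha/(\alpha+1)$. For (ii), I would instead invoke the Schmidt subspace theorem. It gives $|\langle k,\omega\rangle|\ge c_\epsilon|k|^{-(n-1)-\epsilon}$ for nonresonant algebraic $\omega$, i.e.\ a Diophantine property with $\tau=n-1+\epsilon$. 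The gain over the naive estimate should come from summing $\sum_{|k|\le N}|\hat F_k|/|\langle k,\omega\rangle|$ in a way that exploits how rarely small divisors occur, rather than bounding each term by the worst case.

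The main obstacle is this refined summation in (ii) and (iii). Reaching the exponent $\alpha/(\alpha+n-1+\epsilon)$ requires a counting lemma bounding $\#\{|k|\le N:|\langle k,\omega\rangle|<\eta\}$ together with a dyadic decomposition in both $|k|$ and $|\langle k,\omega\rangle|$. My first attempt would be to cite those counting estimates, as in the references attributed in the statement, rather than rederive them. Part (i) is self-contained by the argument above.
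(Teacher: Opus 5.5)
Your argument for (i) does not reach the stated rate, and both of your claims that it does are wrong. Bounding the coefficients of $F_N$ only by $\|F\|_\infty$ gives a total error of $C\bigl(N^{-\alpha}+N^{n+\tau}T^{-1}\bigr)$. Its optimum is $T^{-\alpha/(n+\tau+\alpha)}$. Since $\frac{\alpha}{n+\tau+\alpha}<\frac{\alpha}{n+\tau}$, this decays \emph{more slowly} than $T^{-\alpha/(n+\tau)}$, so it is strictly weaker than (i) for every $\alpha\le 1$. The choice $N=T^{1/(n+\tau)}$ fails as well: it makes $N^{n+\tau}T^{-1}=1$, so the small-divisor term is of order one and the bound is trivial. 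The loss matters downstream: fed into the proof of Theorem~\ref{Thm1}, your rate would lower the H\"older exponent from $\alpha/(\alpha+\tau+n)$ to $\alpha/(2\alpha+\tau+n)$.

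The missing ingredient is the decay of the Fourier coefficients of a H\"older function. Set $h_k=k/(2|k|^2)$, so that $e^{2\pi i\langle k,h_k\rangle}=-1$. Then $\hat F_k=\tfrac12\int_{\T^n}\bigl(F(x)-F(x+h_k)\bigr)e^{-2\pi i\langle k,x\rangle}\,dx$, and hence $|\hat F_k|\le\tfrac12\|F\|_{C^{0,\alpha}}(2|k|)^{-\alpha}$. The same bound holds for $F_N$, because a nonnegative kernel of unit mass has Fourier multipliers of modulus at most $1$. The small-divisor term then becomes $\frac{C}{\sigma T}\sum_{0<|k|\lesssim N}|k|^{\tau-\alpha}\lesssim \frac{N^{n+\tau-\alpha}}{\sigma T}$. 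With $N=T^{1/(n+\tau)}$ both contributions are $O\bigl(T^{-\alpha/(n+\tau)}\bigr)$, with a constant depending only on $n,\sigma,\tau,\|F\|_{C^{0,\alpha}}$, which is exactly (i).

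As for (ii) and (iii), the paper does not prove the lemma at all. The whole statement is quoted from \cite[Theorem~3]{Klein2021ETDS} and \cite[Proposition~2.12]{ZhangCPDE}, so deferring to those sources matches the paper. Your sketches for these parts are heuristics rather than proofs. Even the corrected argument for (i), combined with Schmidt's bound $\tau=n-1+\varepsilon$, yields only the exponent $\alpha/(2n-1+\varepsilon)$, which is weaker than the one claimed in (ii). So the refined small-divisor counting you allude to (or the cited result) is genuinely needed there.
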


\begin{thm}[H\"older continuity]\label{Thm1}
	Assume that Assumptions \ref{ass:A1} --\ref{ass:A3} hold.
	Then there exist constants $l\in(0,1)$ and $C = C(n,\omega)$ such that, for all $\delta>0$ sufficiently small,
	\[
	W_1(\mathfrak{M}_0, \mathfrak{M}_\delta)
	\le C\,\delta^{\,l},
	\qquad
	W_1(\widetilde{\mathfrak{M}}_0, \widetilde{\mathfrak{M}}_\delta)
	\le C\,\delta^{\,l}.
	\]
	The exponent $l$ depends only on $n$ and $\omega$ (see Remark~\ref{rmkA2} for a discussion on its possible values).
\end{thm}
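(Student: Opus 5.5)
We compare an arbitrary projected Mather measure $\mu_\delta\in\mathfrak{M}_\delta$ with the unperturbed one, $\mu_0 = $ Lebesgue measure on $\T^n$. This is the unique invariant measure of the linear flow $x\mapsto x+\om t$, since $\om$ is non-resonant.

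By the dual formula \eqref{eq:KRdistance}, it suffices to bound $\int\psi\,d\mu_\delta-\int\psi\,dx$ uniformly over $\psi\in\mathrm{Lip}_1(\T^n)$. We may normalize $\psi(0)=0$, so that $\|\psi\|_{C^{0,1}}$ is bounded by a dimensional constant.

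\textbf{Step 1: average along the perturbed flow.} By Lemma~\ref{invarianceflow}, $\mu_\delta$ is $\varphi_\delta^t$-invariant. Hence for every $T>0$,
\[
\int\psi\,d\mu_\delta=\int \frac1T\int_0^T\psi(\varphi_\delta^t(x))\,dt\,d\mu_\delta(x).
\]

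\textbf{Step 2: compare with the linear flow.} By (A3), $|\frac{d}{dt}(\varphi_\delta^t(x)-x-\om t)|=|V_\delta(\varphi^t_\delta x)-\om|\le\delta$. Therefore $|\varphi_\delta^t(x)-(x+\om t)|\le\delta t$. This bound uses only (A3) and not the Lipschitz constant of $V_\delta$, so there is no Gronwall exponential growth. Since $\psi$ is $1$-Lipschitz,
\[
\Big|\frac1T\int_0^T\psi(\varphi_\delta^t x)\,dt-\frac1T\int_0^T\psi(x+\om t)\,dt\Big|\le \frac{\delta T}{2}.
\]

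\textbf{Step 3: apply the quantitative ergodic estimate.} Lemma~\ref{lem1} with $\alpha=1$ gives, uniformly in $x$,
\[
\Big|\frac1T\int_0^T\psi(x+\om t)\,dt-\int\psi\Big|\le C\,T^{-\beta},
\]
where $\beta=\frac1{\tau+n}$ in case (i), $\beta=\frac1{n+\epsilon}$ in case (ii), and $\beta=\frac12$ in the special two-dimensional case (iii). Integrating Steps 2 and 3 against $\mu_\delta$ yields
\[
W_1(\mathfrak M_\delta,\mathfrak M_0)\le \frac{\delta T}{2}+CT^{-\beta}.
\]
Optimizing with $T=\delta^{-1/(1+\beta)}$ gives the bound $C\delta^{l}$ with $l=\frac{\beta}{1+\beta}$. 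In case (i) this is $l=\frac{1}{\tau+n+1}$, which lies in $(0,1)$.

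\textbf{Step 4: lift to the tangent bundle.} By the graph property \eqref{eq:mathermeasuregraph}, $\widetilde{\mathfrak M}_\delta=(\mathrm{Id},V_\delta)_\#\mu_\delta$ and $\widetilde{\mathfrak M}_0=(\mathrm{Id},\om)_\#\mu_0$. For a $1$-Lipschitz $\Psi$ on $T\T^n$, write
\[
\int\Psi\,d\widetilde{\mathfrak M}_\delta-\int\Psi\,d\widetilde{\mathfrak M}_0
=\int\big[\Psi(x,V_\delta(x))-\Psi(x,\om)\big]d\mu_\delta+\Big[\int\Psi(x,\om)\,d\mu_\delta-\int\Psi(x,\om)\,d\mu_0\Big].
\]
The first term is at most $\delta$ by (A3). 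The second is at most $W_1(\mu_\delta,\mu_0)$, because $x\mapsto\Psi(x,\om)$ is $1$-Lipschitz. Since $\delta\le\delta^l$ for small $\delta$, we obtain $W_1(\widetilde{\mathfrak M}_\delta,\widetilde{\mathfrak M}_0)\le (C+1)\delta^l$.

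The main obstacle is Step 3, and specifically the uniformity of the ergodic estimate in the base point $x$ and over the whole class $\mathrm{Lip}_1$. The constants in Lemma~\ref{lem1} depend on $F$ only through $\|F\|_{C^{0,\alpha}}$, which is uniformly bounded after normalizing $\psi(0)=0$. The Denjoy–Koksma-type bound is also uniform in $x$, since $x$ enters only as a translation. Once this uniformity is checked, Steps 1–2 need only invariance and (A3), with no regularity of the perturbed dynamics beyond existence of the flow. Existence of the flow is guaranteed by the Lipschitz assumption on $V_\delta$.
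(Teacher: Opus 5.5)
Your proposal is correct and follows essentially the same route as the paper. The paper also reduces to projected measures plus a $\delta$ term via the graph property, time-averages using invariance under $\varphi_\delta^t$, bounds the flow drift by $\delta T/2$, applies Lemma~\ref{lem1}, and optimizes in $T$. The one difference is that you integrate the $x$-uniform ergodic bound directly against $\mu_\delta$, whereas the paper approximates $\mathfrak{M}_\delta$ by convex combinations of Dirac masses; your version is slightly cleaner, since the paper's approximation claimed uniformly over all continuous test functions is stronger than what is true or needed.
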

\begin{proof}
We present the proof under the first case of Assumption~\ref{ass:A2} together with~\ref{ass:A3}; the other case can be treated similarly. By the Kantorovich--Rubinstein duality formula (see Remark \ref{rmk:optimaltransport}), we have
\[
W_1(\widetilde{\mathfrak{M}}_0, \widetilde{\mathfrak{M}}_\delta) = \sup_{\|f\|_{\mathrm{Lip}} \leq 1} \left| \int_{T\mathbb{T}^n} f\, d\widetilde{\mathfrak{M}}_0 - \int_{T\mathbb{T}^n} f\, d\widetilde{\mathfrak{M}}_\delta \right|.
\]
By the graph property of Mather measures, we can rewrite the right-hand side as
\[
W_1(\widetilde{\mathfrak{M}}_0, \widetilde{\mathfrak{M}}_\delta) = \sup_{\|f\|_{\mathrm{Lip}} \leq 1} \left| \int_{\mathbb{T}^n} f(x, V_0(x))\, d\mathfrak{M}_0 - \int_{\mathbb{T}^n} f(x, V_\delta(x))\, d\mathfrak{M}_\delta \right|.
\]
To decompose the difference, we add and subtract the quantity $\int_{\mathbb{T}^n} f(x, V_0(x))\, d\mathfrak{M}_\delta$, obtaining
\begin{align}\label{eq:inequalitydecomposed}
	W_1(\widetilde{\mathfrak{M}}_0, \widetilde{\mathfrak{M}}_\delta)
	&= \sup_{\|f\|_{\mathrm{Lip}}\leq 1} \Bigg| \int_{\mathbb{T}^n} f(x, V_0(x))\, d\mathfrak{M}_0
	- \int_{\mathbb{T}^n} f(x, V_0(x))\, d\mathfrak{M}_\delta \notag \\
	&\qquad\quad + \int_{\mathbb{T}^n} f(x, V_0(x))\, d\mathfrak{M}_\delta
	- \int_{\mathbb{T}^n} f(x, V_\delta(x))\, d\mathfrak{M}_\delta \Bigg| \notag \\
	&\leq \sup_{\|f\|_{\mathrm{Lip}} \leq 1} \Bigg[ \left| \int_{\mathbb{T}^n} f(x, V_0(x))\, d(\mathfrak{M}_0 - \mathfrak{M}_\delta) \right| \notag \\
	&\qquad\qquad\qquad + \int_{\mathbb{T}^n} \left| f(x, V_0(x)) - f(x, V_\delta(x)) \right| \, d\mathfrak{M}_\delta \Bigg]. 	
\end{align}
By Assumption~\ref{ass:A3}, the second term in \eqref{eq:inequalitydecomposed} is bounded above by \(\delta\). Hence, we obtain the following inequality:
\begin{equation}\label{estimation of Mather measure}
W_1\big(\widetilde{\mathfrak{M}}_0, \widetilde{\mathfrak{M}}_\delta\big) \leq  W_1\big(\mathfrak{M}_0, \mathfrak{M}_\delta\big) +\delta .
\end{equation}

By Lemma \ref{invarianceflow}, we have the following equalities:
\[
\mathfrak{M}_0=(\varphi_0^t)_{\#} \mathfrak{M}_0, \quad \mathfrak{M}_\delta=(\varphi_\delta^t)_{\#} \mathfrak{M}_\delta
\]
for every $t\in \mathbb{R}$. Then by the Kantorovich--Rubinstein duality formula, for any $T>0$,
\begin{align}
	W_1(\mathfrak{M}_0, \mathfrak{M}_\delta)
	&=\sup_{\|f\|_{\mathrm{Lip}}\leq 1} \left| \int_{\mathbb{T}^n} f \, d\mathfrak{M}_0 - \int_{\mathbb{T}^n} f \, d\mathfrak{M}_\delta \right| \nonumber \\
	&\leq \sup_{\|f\|_{\mathrm{Lip}} \leq 1} \Bigg| \int_{\mathbb{T}^n} f \, d\mathfrak{M}_0 -\frac{1}{T} \int_0^T \int_{\mathbb{T}^n}f \circ \varphi_0^t \, d\mathfrak{M}_\delta \, dt \nonumber\\
	&\qquad\qquad\qquad +\frac{1}{T}\int_0^T \int_{\mathbb{T}^n} \big( f \circ \varphi_0^t - f \circ \varphi_\delta^t \big) \, d\mathfrak{M}_\delta \, dt \Bigg| \label{E5}
\end{align}
Note that every probability measure on $\mathbb{T}^n$ can be approximated in the weak topology by a convex combination of Dirac measures. More precisely, for any $\varepsilon>0$, there exist points $x_1,\dots,x_m\in \mathbb{T}^n$ and weights $\lambda_1,\dots,\lambda_m\geq 0$ with $\sum_{i=1}^m \lambda_i=1$ such that  
\[
\left|\int_{\mathbb{T}^n}\phi \, d\mathfrak{M}_\delta - \sum_{i=1}^m \lambda_i \phi(x_i)\right| \leq \varepsilon,
\quad \forall \phi \in C(\mathbb{T}^n, \mathbb{R}).
\]  
Since $\omega$ is non-resonant, the function $t \mapsto f(x+\omega t)$ is quasi periodic\footnote{We call a function $\phi:\R\rightarrow\R$ quasi periodic, if there exists a function $\psi:\T^m\rightarrow\R$ and a non-resonant vector $\nu\in\R^m$, such that $\phi(x)=\psi(x\cdot \nu)$. 
}. Using Lemma~\ref{lem1}, we obtain the estimate  
\begin{align}\label{E6}
	&\left| \int_{\mathbb{T}^n} f \, d\mathfrak{M}_0 
	- \frac{1}{T}\int_0^T \int_{\mathbb{T}^n} f(x+\omega t) \, d\mathfrak{M}_\delta \, dt \right| \nonumber\\
	&\quad \leq \left| \int_{\mathbb{T}^n} f \, d\mathfrak{M}_0 
	- \frac{1}{T}\int_0^T \sum_{i=1}^m \lambda_i f(x_i+\omega t)\, dt \right| + \varepsilon \nonumber\\
	&\quad \leq C T^{-\beta} + \varepsilon,
\end{align}
where $\beta =\frac{\alpha}{\tau+n} \in(0,1)$ is a constant. Since $\varepsilon>0$ is arbitrary, we conclude
\[
\left| \int_{\mathbb{T}^n} f \, d\mathfrak{M}_0 - \frac{1}{T}\int_0^T \int_{\mathbb{T}^n} f(x+\omega t) \, d\mathfrak{M}_\delta \, dt \right| \le C T^{-\beta}.
\]
By equation \eqref{ODEfor t}, we have
\[
\varphi_\delta^t(x)=x+\int_0^t V_\delta(\varphi_\delta^s(x))\,ds.
\]
Together with $\|f\|_{\mathrm{Lip}}\leq 1$, this implies 
\begin{align*}
	\frac{1}{T} \int_0^T \int_{\mathbb{T}^n} | f\circ \varphi_0^t(x) - f\circ \varphi_\delta^t(x) | \, d\mathfrak{M}_\delta(x) \, dt 
	&\leq \frac{1}{T} \int_0^T \int_{\mathbb{T}^n} \int_0^t |\omega - V_\delta(\varphi_\delta^s(x))| \, ds \, d\mathfrak{M}_\delta(x) \, dt.
\end{align*}
Using Assumption~\ref{ass:A3}, we have 
\begin{align}\label{E7}
	\frac{1}{T} \int_0^T \int_{\mathbb{T}^n} | f\circ \varphi_0^t -f\circ \varphi_\delta^t | \, d\mathfrak{M}_\delta \, dt 
	&\leq \frac{1}{T} \int_0^T \int_{\mathbb{T}^n} \int_0^t \delta \, ds \, d\mathfrak{M}_\delta \, dt \leq \frac{T\delta}{2}.
\end{align}
Substituting the estimates from \eqref{E6} and \eqref{E7} into inequality \eqref{E5}, we derive the following bound:
\begin{align}\label{E8}
	W_1(\mathfrak{M}_0, \mathfrak{M}_\delta) \leq C_1 T^{-\beta} + C_2 T \delta,
\end{align}
where \( C_1, C_2 > 0 \) are constants.

Since \eqref{E8} holds for any \( T > 0 \), we can minimize the function 
\[
g(T) := C_1 T^{-\beta} + C_2 T \delta.
\]
The minimum point of \( g(T) \) is given by 
\[
T_{\text{min}} = \tilde{C} \delta^{-\frac{1}{\beta+1}}
\]
for some constant \( \tilde{C} > 0 \). Evaluating \( g(T_{\text{min}}) \), we have
\[
g_{\text{min}} \leq C_3 \delta^{\frac{\beta}{\beta+1}}.
\]
Substituting this into \eqref{E8}, we deduce that
\[
W_1(\mathfrak{M}_0, \mathfrak{M}_\delta) \leq C \delta^{\frac{\beta}{\beta+1}}.
\]
Substituting the above inequality into inequality \eqref{estimation of Mather measure}, we finally obtain
\[
W_1\big(\widetilde{\mathfrak{M}}_0, \widetilde{\mathfrak{M}}_\delta\big) \leq C \delta^{\,l},
\quad \text{with } l = \frac{\beta}{\beta+1} \in (0,1).
\]
Here, $l$ is a constant depending on $n$ and $\omega$. This completes the proof. 
\end{proof}

\begin{rmk}\label{rmkA2}
\begin{enumerate}[label=(\roman*)]
		\item The H\"older exponent for the $W_1$-distance of the Mather measure \emph{strongly depends} on the convergence rate provided by Lemma~\ref{lem1}. When $\omega \in \mathcal D_{\sigma,\tau}$, we have $\beta = \alpha/(\tau+n)$ and the exponent $l$ is given by
		\[
		l = \frac{\alpha}{\alpha + \tau + n}.
		\]
		For Lipschitz observables ($\alpha=1$), this simplifies to \( l = 1/(1+\tau+n) \). In particular, if $n=2$ and $\tau=1$, this yields $l=1/4$.
		\item If $\omega$ is non-resonant and each component $\omega_i$ is algebraic, then by Schmidt's subspace theorem, $\omega$ is Diophantine with some exponent $\tau$. For such $\omega$, the exponent $l$ can be made arbitrarily close to \( 1/(n+1) \) by taking $\varepsilon$ small in Lemma~\ref{lem1} (ii), namely
		the exponent $l$ in Theorem \ref{Thm1} improves to
		\be\label{eq:l2}
		l = \frac{1}{n+\varepsilon+1},
		\ee
		where $\varepsilon>0$ is arbitrarily small. 
		\item In fact, it is sufficient to estimate \(|V_\delta-V_0|\) restricted to the projected Mather set \(\mathcal{M}_\delta\) in \eqref{eq:inequalitydecomposed} and \eqref{E7}, and the proof remains valid.
\end{enumerate}
\end{rmk}

\medskip

\subsection{Lower bound in dimension 2}
In this subsection, we obtain a lower bound for the $W_1$-distance between the Mather measures in the case $n=2$. The reason we specifically choose $n=2$ is that Dirichlet's approximation theorem guarantees that for any irrational $\nu\in\R$, there exist infinitely many pairs \((q,p) \in \mathbb{Z}^2\) with $q>0$ such that
\begin{equation}\label{eq:Dirichlet}
\left| \nu - \frac{p}{q} \right| \le \frac 1{q^{2}}.
\end{equation}
On the other hand, the Diophantine exponent of $(\nu,1)\in\R^2$ is at least $1$, which matches the approximation rate in \eqref{eq:Dirichlet}. This yields a sharp lower bound for the Wasserstein distance when $\nu$ has Diophantine exponent $1$.
\begin{rmk}
For $n>3$, there is a  gap between the minimal Diophantine index and the exponent in the Dirichlet's approximation (see \cite{ChengActa2011}), therefore it is not possible to get an analogue of Diophantine approximation to the frequency. Nonetheless, in subsection \ref{Sec:3.3} we still provide a possible lower bound under special arithmetic conditions.
\end{rmk}

\begin{defn}[Physical measure]
	Let $\varphi_V^t : \mathbb{T}^n \to \mathbb{T}^n$ be the continuous-time flow generated by a Lipschitz vector field $V : \mathbb{T}^n \to T\mathbb{T}^n$. For any $x \in \mathbb{T}^n$ and $T > 0$, define the empirical measure
	$\mu_T^x := \frac{1}{T} \int_0^T \delta_{\varphi_V^t(x)} \, dt$,
	where $\delta_y$ denotes the Dirac measure at $y \in \mathbb{T}^n$.  
	An invariant Borel probability measure $\mu$ for $\varphi_V^t$ is called a \emph{physical measure} if the set
	\[
	\mathcal{B}(\mu) := \left\{ x \in \mathbb{T}^n : \lim_{T \to \infty} \mu_T^x = \mu \text{ in the weak* topology} \right\}
	\]
	has positive Lebesgue measure. The set $\mathcal{B}(\mu)$ is called the \emph{basin of attraction} of $\mu$.
\end{defn}

\begin{lem}\label{Thm2}
	Let $n=2$ and $\omega=(1,\nu)$, where $\nu$ is Diophantine with exponent $1<\tau(\nu)<\infty$.
	Then, for any $0<r<\tau(\nu)$, there exist a sequence $\{\delta_j\}_{j\in\mathbb{N}}$
	with $\delta_j>0$, $\delta_j\to0$ as $j\to\infty$ and a corresponding sequence of
	vector field  $V_{\delta_j}$ satifying 
	\[
	\sup_{x\in\mathbb{T}^2}|V_0-V_{\delta_j}(x)|=\delta_j,
	\]
such that 
	\begin{equation}\label{E9}
		W_1\bigl(\mathfrak{M}_0,\mathfrak{M}_{\delta_j}\bigr)
		\ge c\,\delta_j^{1/(r+1)}
		\qquad \text{for all } j\in\mathbb{N}.
	\end{equation}
	for some constant $c>0$.
\end{lem}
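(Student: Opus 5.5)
Since $\tau(\nu)>r$, the characterization \eqref{def:Diophantine in R} gives infinitely many $q_j\to\infty$ with $\|q_j\nu\|_{\mathbb Z}\le q_j^{-r}$. Let $p_j$ be the nearest integer to $q_j\nu$, set $\nu_j:=p_j/q_j$, and take
\[
\delta_j:=|\nu-\nu_j|=\frac{\|q_j\nu\|_{\mathbb Z}}{q_j}\le q_j^{-(r+1)},
\qquad\text{so that}\qquad q_j\le \delta_j^{-1/(r+1)} .
\]
The perturbation is the constant (hence Lipschitz) vector field
\[
V_{\delta_j}(x):\equiv(1,\nu_j),
\]
which gives $\sup_x|V_0-V_{\delta_j}(x)|=\delta_j$. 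The point is that the flow $\dot x=(1,\nu_j)$ is completely periodic: every orbit is a closed curve of period $q_j$ that winds $q_j$ times horizontally and $p_j$ times vertically.

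By Lemma \ref{invarianceflow}, the Mather measures of $L_{\delta_j}$ are invariant for this flow. Every invariant measure of a completely periodic linear flow is a superposition of uniform measures on closed orbits. In fact, for $L_{\delta_j}$ with $u\equiv0$, every point lies in the Aubry set, and each periodic-orbit measure has zero action. So we may choose $\mathfrak M_{\delta_j}$ to be the uniform (time-averaged) measure $\mu_j$ on a single closed orbit $\Gamma_j$. By the convention stated after Lemma \ref{invarianceflow} (arbitrary choice of Mather measure), this is legitimate; I would also remark that any invariant measure supported on a union of closed orbits gives the same bound. On the other side, $\mathfrak M_0$ is Lebesgue measure $m$ on $\T^2$, by unique ergodicity of the irrational flow together with Lemma \ref{invarianceflow}.

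It remains to bound $W_1(m,\mu_j)$ from below, and the main step is a good test function. The curve $\Gamma_j$ is a line of slope $\nu_j$ which, lifted to $\R^2$ and projected to $\T^2$, consists of $q_j$ parallel segments. Their intersections with each vertical circle $\{x_1=s\}$ are $q_j$ equally spaced points at mutual distance $1/q_j$. Hence every point of $\T^2$ lies within distance $\lesssim 1/q_j$ of $\Gamma_j$, while the points at vertical distance $\ge 1/(4q_j)$ from $\Gamma_j$ form a set of Lebesgue measure $\ge 1/2$. Take
\[
\psi(x):=\operatorname{dist}(x,\Gamma_j),
\]
which is $1$-Lipschitz and vanishes on $\operatorname{supp}\mu_j$. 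Then \eqref{eq:KRdistance} gives
\[
W_1(m,\mu_j)\ \ge\ \int\psi\,dm-\int\psi\,d\mu_j\ \ge\ \frac{c'}{q_j}\ \ge\ c'\,\delta_j^{1/(r+1)} .
\]
The Euclidean distance to a line of slope $\nu_j$ is comparable to the vertical distance, since $|\nu_j|$ is bounded; this fixes $c'$.

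The only delicate points are the following:
\begin{itemize}
\item justifying that $\mu_j$ is indeed an admissible Mather measure for $L_{\delta_j}$ (zero action, closed), which is immediate from Step 1 of Lemma \ref{invarianceflow};
\item passing to a subsequence of the $q_j$ so that $\delta_j$ decreases to $0$; this holds because $\delta_j\le q_j^{-(r+1)}\to0$ and $\delta_j\neq0$ by irrationality of $\nu$.
\end{itemize}
If one instead requires the bound for every Mather measure of $L_{\delta_j}$, the same test function argument still applies, because every invariant measure is carried by orbits each of which is $1/q_j$-dense but occupies measure zero.
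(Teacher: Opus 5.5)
Your argument is correct and follows the paper's route: the constant rational field $V_{\delta_j}\equiv(1,p_j/q_j)$ with $\delta_j\le q_j^{-(r+1)}$, the uniform measure on a single closed orbit as the chosen element of $\mathfrak M_{\delta_j}$, and the $1$-Lipschitz test function $\operatorname{dist}(\cdot,\Gamma_j)$ in the Kantorovich--Rubinstein formula, giving $W_1\ge c'/q_j$. Your vertical-slice evaluation of $\int\psi\,dm$, which equals $1/(4q_j\sqrt{1+\nu_j^2})$, is actually cleaner than the paper's computation. Two small remarks on it: it needs $\gcd(p_j,q_j)=1$, but reducing the fraction only enlarges the spacing, so nothing is lost. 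Also, the paper's displayed value $d_n^2/4$ is a slip for $d_n/4$, although its conclusion $c/k_n$ is right. Finally, the paper's extra ``physical measure'' modification of $V_{\delta_n}$ is not needed for the lemma as stated.

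You should, however, drop your closing remarks, because they are false. Lebesgue measure $m$ is invariant under every linear flow, including $\dot x=(1,\nu_j)$. Its lift $m\otimes\delta_{(1,\nu_j)}$ is closed and has zero $L_{\delta_j}$-action, so it is a Mather measure of $L_{\delta_j}$. For this choice $W_1(\mathfrak M_0,m)=0$. Similarly, a measure carried by $N$ closed orbits only gives a bound of order $1/(Nq_j)$ with your test function. Superpositions over infinitely many orbits can approximate $m$ arbitrarily well. So the lower bound genuinely depends on selecting the single-orbit measure. This is exactly what the paper's convention permits: $\mathfrak M_\delta$ denotes a choice within a possibly non-singleton set of Mather measures.
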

\begin{proof}
Without loss of generality, we assume that $\nu<1$. For any $r<\tau(\nu)$, the definition of $\tau(\nu)$ implies the existence of infinitely many integers $k_n \in \mathbb{N}$ and $p_n \in \mathbb{Z}$ such that 
\[
\left|\nu-\frac{p_n}{k_n}\right|\leq \frac{1}{k_n^{r+1}}.
\]
We set $V_{\delta_n}(x):=(1, p_n/k_n)$ for all $x\in \mathbb{T}^2$. Then $\delta_n=|V_{\delta_n}-V_0| \leq 1/k_n^{r+1} \to 0$ as $n \to \infty$.

For each rational $p_n/k_n$, the corresponding flow on $\mathbb T^2$ admits a periodic orbit. Introduce the global transversal section
\[
\Sigma := \{(x,y)\in\mathbb T^2:\ x=0\}.
\]
The associated Poincaré map $P_n:\Sigma\to\Sigma$ is a rigid rotation given by
\[
P_n(y)=y+\frac{p_n}{k_n} \bmod 1.
\]
Consequently, the periodic orbit of the flow intersects $\Sigma$ at exactly $k_n$ distinct points. In the fundamental domain $(0,1)\times (0,1)$, the closed orbit decomposes into $k_n$ parallel line segments:
\[
l_i := \bigl\{(x,y)\in(0,1)^2:\ y=\tfrac{p_n}{k_n}x+\tfrac{i}{k_n} \bmod 1 \bigr\}, \quad i=0,1,\dots,k_n-1.
\]
We endow each $l_i$ with normalized arc-length measure and define the probability measure
\[
\mathfrak M_{\delta_n} := \frac{1}{k_n} \sum_{i=0}^{k_n-1} \mathfrak{m}|_{l_i},
\]
supported on the periodic orbit. To obtain a lower bound for $W_1(\mathfrak M_0,\mathfrak M_{\delta_n})$, consider the function 
\[
\psi(x) := \min_{0\le i\le k_n-1} \operatorname{dist}(x,l_i),
\]
where $\operatorname{dist}(x,l_i)$ denotes the Euclidean distance to $l_i$ in $\mathbb{T}^2$. Since $\psi$ is the minimum of finitely many distance functions, it is $1$-Lipschitz. Therefore, by Kantorovich--Rubinstein duality,
\[
W_1(\mathfrak M_0,\mathfrak M_{\delta_n}) \ge \int_{\mathbb{T}^2}\psi\,d\mathfrak M_0 - \int_{\mathbb{T}^2}\psi\,d\mathfrak M_{\delta_n} = \int_{\mathbb{T}^2}\psi\,d\mathfrak M_0,
\]
because $\psi$ vanishes on $\operatorname{supp}(\mathfrak M_{\delta_n})$.

The distance between consecutive lines $l_i$ and $l_{i+1}$ is $d_n = 1/(k_n\sqrt{1+(p_n/k_n)^2})$. A direct computation gives
\[
\int_{\mathbb{T}^2}\psi\,d\mathfrak M_0 = k_n \int_0^{d_n} \min(t, d_n-t)\,dt = \frac{d_n^2}{4} = \frac{1}{4k_n^2\bigl(1+(p_n/k_n)^2\bigr)}.
\]
Since $p_n/k_n$ is bounded, there exists $c>0$ such that
\[
W_1(\mathfrak M_0,\mathfrak M_{\delta_n}) \ge \frac{c}{k_n}.
\]
Because $\delta_n \le k_n^{-(r+1)}$, we obtain $k_n \le \delta_n^{-1/(r+1)}$, and thus
\[
W_1(\mathfrak M_0,\mathfrak M_{\delta_n}) \ge c\,\delta_n^{1/(r+1)}.
\]

To ensure that $\mathfrak M_{\delta_n}$ is indeed a physical measure for a smooth perturbation, we construct a smooth vector field $V_{\delta_n}$ satisfying the given bound and having the periodic orbit as an attractor (this construction is inspired by Proposition 18 in \cite{Sorrentino21}).
For each strip between $l_i$ and $l_{i+1}$, define the midline
\[
m_i := \{(x,y)\in(0,1)^2 : y = \tfrac{p_n}{k_n} x + \tfrac{i+1/2}{k_n} \bmod 1\}.
\]
Perturb the vector field in the vertical direction:
\[
V_{\delta_n}(x,y) := \bigl(1, \tfrac{p_n}{k_n} + \delta\, g(x,y)\bigr),
\]
where $\delta>0$ is small, $g:(0,1)\times (0,1) \to\mathbb{R}$ is a smooth function, positive above $m_i$, negative below $m_i$, and vanishing on each $l_i$ and $m_i$, with appropriate monotonicity.  More specifically, for each $i$:
\begin{itemize}
	\item $g(x,y)<0$ for $y \in (\tfrac{p_n}{k_n} x + \tfrac{i}{k_n}, \tfrac{p_n}{k_n} x + \tfrac{i+1/2}{k_n})$, monotone increasing there,
	\item $g(x,y)>0$ for $y \in (\tfrac{p_n}{k_n} x + \tfrac{i+1/2}{k_n}, \tfrac{p_n}{k_n} x + \tfrac{i+1}{k_n})$, monotone decreasing there,
	\item $g(x, \tfrac{p_n}{k_n} x + \tfrac{i}{k_n}) = g(x, \tfrac{p_n}{k_n} x + \tfrac{i+1/2}{k_n}) = g(x, \tfrac{p_n}{k_n} x + \tfrac{i+1}{k_n}) = 0$,
	\item $g$ is smoothed near the endpoints to ensure $C^\infty$ regularity.
\end{itemize}

For sufficiently small $\delta>0$, this construction makes each $l_i$ an attracting invariant curve. 
Consequently, the time averages of almost every orbit converge to a measure supported entirely on $\bigcup_i l_i$, which coincides with the previously defined measure $\mathfrak M_{\delta_n}$.
Thus $\mathfrak M_{\delta_n}$ is a physical measure and satisfies the lower bound \eqref{E9}. 
This completes the proof of Lemma \ref{Thm2}.
\end{proof}

\medskip

Based on Lemma~\ref{Thm2}, we obtain a lower bound on the \( W_1 \)-distance of the Mather measures:
\begin{thm}\label{Thm5}
	Let $n=2$ and $V_0=\omega=(1,\nu)$, where $\nu$ is Diophantine with exponent $1\leq \tau(\nu)<\infty$.
	Then, for any $0<r<\tau(\nu)$, there exist a sequence $\{\delta_j>0\}_{j\in\mathbb{N}}$ tending to $0$ as $j\to\infty$ and a corresponding sequence of
	vector field  $V_{\delta_j}$ satifying 
	\[
	\sup_{x\in\mathbb{T}^2}|V_0-V_{\delta_j}(x)|=\delta_j,
	\]
such that 
	\[
	W_1\bigl(\widetilde{\mathfrak{M}}_0,\,\widetilde{\mathfrak{M}}_{\delta_j}\bigr)
	\ge c\,\delta_j^{1/(r+1)},
	\]
	where $c>0$ is a constant independent of $j\in\N$.
\end{thm}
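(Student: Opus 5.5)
The plan is to deduce Theorem~\ref{Thm5} from Lemma~\ref{Thm2} by comparing lifted measures with their projections. Take the sequence $\delta_j$ and the vector fields $V_{\delta_j}$ produced in Lemma~\ref{Thm2}, and set $L_{\delta_j}(x,v)=\frac12|v-V_{\delta_j}(x)|^2$. By Lemma~\ref{invarianceflow} and the graph property (Step 2 of its proof), every Mather measure of $L_{\delta_j}$ has the form $\widetilde{\mathfrak M}_{\delta_j}=(\mathrm{Id},V_{\delta_j})_\#\mathfrak M_{\delta_j}$, and likewise $\widetilde{\mathfrak M}_0=(\mathrm{Id},\omega)_\#\mathfrak M_0$, with $\mathfrak M_0$ Lebesgue measure on $\mathbb T^2$.

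The key inequality is $W_1(\widetilde{\mathfrak M}_0,\widetilde{\mathfrak M}_{\delta_j})\ge W_1(\mathfrak M_0,\mathfrak M_{\delta_j})$. I will obtain it by Kantorovich--Rubinstein duality \eqref{eq:KRdistance}. For any $1$-Lipschitz $\psi$ on $\mathbb T^2$, the function $f=\psi\circ\pi$ is $1$-Lipschitz on $T\mathbb T^2$ with the product metric. Its integrals against the lifted measures are exactly the integrals of $\psi$ against the projections, so taking the supremum over $\psi$ gives the inequality. Equivalently, $\pi$ is $1$-Lipschitz, hence it does not increase $W_1$.

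Combining this with \eqref{E9} gives $W_1(\widetilde{\mathfrak M}_0,\widetilde{\mathfrak M}_{\delta_j})\ge c\,\delta_j^{1/(r+1)}$, with $c$ independent of $j$.

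The point needing care is that $\mathfrak M_{\delta_j}$ in Lemma~\ref{Thm2} must actually be the projected Mather measure, or at least every projected Mather measure must satisfy the bound.
\begin{itemize}
\item For the constant rational field $V_{\delta_j}=(1,p_j/k_j)$, every Mather measure is invariant under the rational linear flow (Lemma~\ref{invarianceflow}). It is therefore a mixture of measures on closed orbits, not necessarily the specific orbit $\bigcup_i l_i$. I would handle this by using the modified field $V_{\delta_j}=(1,p_j/k_j+\delta g)$ from the proof of Lemma~\ref{Thm2}.
\item In that field the only recurrent set is $\bigcup_i l_i$, together with the repelling midlines $m_i$. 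Every invariant measure is then supported on the closed orbits $l_i$ or $m_i$, since the vertical drift is monotone between them.
\item To keep the bound uniform in the choice of Mather measure, I would replace $\psi$ by the distance to the union of all these invariant curves. Their spacing is $d_j/2$, and the same computation gives $\int\psi\,d\mathfrak M_0=d_j^2/16\ge c/k_j^2$. The rest of the argument then goes through with a smaller constant.
\end{itemize}

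Finally, $\sup|V_{\delta_j}-\omega|\le|\nu-p_j/k_j|+\delta\|g\|_\infty$ can be kept comparable to $k_j^{-(r+1)}$ by choosing $\delta\|g\|_\infty\le k_j^{-(r+1)}$. This preserves the relation $k_j\lesssim\delta_j^{-1/(r+1)}$ that converts the $1/k_j$-type lower bound into $\delta_j^{1/(r+1)}$.

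Two caveats remain. First, as in the paper's computation, the Lebesgue integral of $\psi$ is really of order $1/k_j^2$ rather than $1/k_j$, so it must be checked that the estimate used in Lemma~\ref{Thm2} yields the stated power. Second, the case $\tau(\nu)=1$ is included here, and for it Dirichlet's theorem \eqref{eq:Dirichlet} supplies the approximants in place of the definition of $\tau(\nu)$.
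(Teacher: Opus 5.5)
Your overall strategy is sound, and it differs from the paper's in two useful ways. However, your computation of $\int\psi\,d\mathfrak M_0$ is wrong, and as written it does not give the stated exponent.

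\textbf{How your route differs.} The paper lifts the measure of Lemma~\ref{Thm2} with the constant velocity $(1,p_n/k_n)$. It then asserts, without proof, that $T(x,v)=(T_1(x),V_{\delta_n})$ is an optimal map, where $T_1$ is the nearest-point projection. Using $\sqrt{a^2+b^2}\ge(a+b)/\sqrt2$, it gets $W_1(\widetilde{\mathfrak M}_0,\widetilde{\mathfrak M}_{\delta_n})\ge\frac{1}{\sqrt2}\bigl(W_1(\mathfrak M_0,\mathfrak M_{\delta_n})+\delta_n\bigr)$. That inequality actually holds for every coupling, since both lifts have constant velocity, but the extra $\delta_n$ is not needed. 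Your observation that $\pi$ is $1$-Lipschitz gives the required inequality with no optimality claim at all.

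\textbf{Your concern about which Mather measure is meant is justified, and goes beyond the paper.} For $L_\delta=\frac12|v-V_\delta|^2$ the Mather measures are exactly the lifts of all $V_\delta$-invariant probability measures. So for the constant rational field, Lebesgue measure is itself a Mather measure, at $W_1$-distance only $|\nu-p/k|$ from $\widetilde{\mathfrak M}_0$. For the modified field, the uniform measure on the repelling midline is also a Mather measure; the paper's appeal to physicality does not exclude it. Making $\psi$ vanish on both invariant curves, and taking $\delta\|g\|_\infty\le k_j^{-(r+1)}$, is the right fix.

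\textbf{The step that fails.} Your bound $\int\psi\,d\mathfrak M_0\ge c/k_j^2$ would only give $c\,\delta_j^{2/(r+1)}$, so ``the rest goes through'' is false for what you wrote. Your first caveat is also wrong: the true order is $1/k_j$. To see this:
\begin{itemize}
\item Take $\gcd(p_j,k_j)=1$, set $\theta:=k_jy-p_jx \bmod 1$, and choose $g=h(\theta)$. Then $g$ is well defined on $\mathbb T^2$ and $\dot\theta=k_j\delta\,h(\theta)$, which also makes your ``monotone drift'' claim about invariant measures rigorous.
\item The preimage in $\mathbb R^2$ of $\{\theta=0\}\cup\{\theta=\frac12\}$ is the family of lines $k_jy-p_jx\in\frac12\mathbb Z$. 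Hence $\psi=\operatorname{dist}(k_jy-p_jx,\frac12\mathbb Z)/\sqrt{k_j^2+p_j^2}$.
\item Since $\theta_\#\mathrm{Leb}_{\mathbb T^2}$ is Lebesgue measure on the circle, $\int\psi\,d\mathfrak M_0=\frac{1}{8\sqrt{k_j^2+p_j^2}}\ge\frac{c}{k_j}$.
\end{itemize}
Put differently, each of the $k_j$ strips in the fundamental domain contributes $\asymp d_j^2$, so the total is $\asymp k_jd_j^2\asymp d_j$. The paper's displayed ``$=d_n^2/4$'' drops this factor, but its conclusion $W_1\ge c/k_n$ is correct. With this correction your argument proves the theorem.

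Your second caveat is harmless. For every $r<\tau(\nu)$, the definition of $\tau(\nu)$ already gives infinitely many $k$ with $|\nu-p/k|\le k^{-(r+1)}$, including when $\tau(\nu)=1$.
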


\begin{proof}
We lift the projected Mather measure \(\mathfrak{M}_{\delta_n}\) from Lemma~\ref{Thm2} to the tangent bundle:
\[
\widetilde{\mathfrak{M}}_{\delta_n} := \frac{1}{k_n} \sum_{i=1}^{k_n} \left( \mathfrak{m}|_{l_i} \otimes \delta_{V_{\delta_n}} \right),
\]
where $\mathfrak{m}|_{l_i}$ is the normalized Lebesgue measure on the line $l_i$, and $\delta_{V_{\delta_n}}$ is the Dirac measure at the constant velocity $V_{\delta_n} = (1, p_n/k_n)$. The unperturbed Mather measure is $\widetilde{\mathfrak{M}}_0 := \mathfrak{m} \otimes \delta_{V_0}$, with $V_0 = (1, \nu)$. By the graph property, $\widetilde{\mathfrak{M}}_{\delta_n}$ is indeed the Mather measure associated with $L_{\delta_n}$, since it is supported on the graph of the flow induced by \( V_{\delta_n} \).

To estimate $W_1(\widetilde{\mathfrak{M}}_0, \widetilde{\mathfrak{M}}_{\delta_n})$, consider an optimal transport map $T(x, v) = (T_1(x), T_2(x,v))$ pushing $\widetilde{\mathfrak{M}}_0$ to $\widetilde{\mathfrak{M}}_{\delta_n}$, where $T_1(x)$ maps $x$ to its nearest point on $\bigcup_i l_i$, and $T_2(x,v) \equiv V_{\delta_n}$, which replaces the constant velocity \( V_0 \) with the perturbed vector \( V_{\delta_n} \). Then
\[
W_1(\widetilde{\mathfrak{M}}_0, \widetilde{\mathfrak{M}}_{\delta_n}) = 
 \int_{T\mathbb{T}^2} | T(x, v) - (x, v) | \, d\widetilde{\mathfrak{M}}_0 =
\int_{\mathbb{T}^2} \sqrt{ |T_1(x) - x|^2 + |V_{\delta_n} - V_0|^2 } \, dx.
\]
Using $\sqrt{a^2+b^2} \ge (a+b)/\sqrt{2}$ for $a,b\ge0$, we get
\[
W_1(\widetilde{\mathfrak{M}}_0, \widetilde{\mathfrak{M}}_{\delta_n}) \ge \frac{1}{\sqrt{2}} \left( \int_{\mathbb{T}^2} |T_1(x)-x|\,dx + |V_{\delta_n}-V_0| \right) = \frac{1}{\sqrt{2}} \left( W_1(\mathfrak{M}_0, \mathfrak{M}_{\delta_n}) + \delta_n \right).
\]
Applying Lemma~\ref{Thm2} yields the desired lower bound:
\[
W_1(\widetilde{\mathfrak{M}}_0, \widetilde{\mathfrak{M}}_{\delta_n}) 
\geq \frac{1}{\sqrt{2}} \left( \frac{1}{4} \delta_n^{\frac{1}{r+1}} + \delta_n \right)
\sim \mathcal{O}\left( \delta_n^{\frac{1}{r+1}} \right)
\]
as $n\rightarrow +\infty$.
\end{proof}

\begin{rmk}\label{rmk:2}
In the theorem above, the lower bound exponent is $1/(r+1)$, which cannot be less than $1/2$ (since $\tau(\nu) \ge 1$). Meanwhile, Theorem~\ref{Thm1} provides an upper bound exponent $l \le 1/3$ for the case $\tau=1, n=2$. Thus there is a gap between the exponents obtained from the upper and lower bounds. 
\end{rmk}

\subsection{Lower bound in higher dimensions ($n>2$)}\label{Sec:3.3}
The previous argument suggests a possible lower bound for $n>2$ under additional arithmetic assumptions. While a general lower bound is not available, we present a result under a restrictive arithmetic condition.

Let the non-resonant vector be 
\[
\omega = (\omega_1, \omega_2, \dots, \omega_{n-1}, 1) \in \mathbb{R}^n,
\]
and assume that each component $\omega_k$ has Diophantine exponent $\tau(\omega_k) = 1$ for $k=1,\dots,n-1$.

For each component $\omega_k$, choose a sequence of rational approximants
\[
\frac{p_{k,m}}{q_{k,m}}, \qquad p_{k,m}, q_{k,m} \in \mathbb{Z}, \ q_{k,m} > 0,
\]
and define the perturbed velocity vector
\[
V_{\delta_m} := \left( \frac{p_{1,m}}{q_{1,m}},\, \frac{p_{2,m}}{q_{2,m}},\, \dots,\, \frac{p_{n-1,m}}{q_{n-1,m}},\, 1 \right).
\]
Let $T_m = \operatorname{lcm}(q_{1,m}, \dots, q_{n-1,m})$, and set $Q_m = \max_{1\le k\le n-1} q_{k,m}$.  Since
\[
0 < {\max_{1\leq k\leq n-1} q_{k,m}} \le T_m \le \prod_{k=1}^{n-1} q_{k,m} \le \bigl(\max_{1 \le k \le n-1} q_{k,m}\bigr)^{n-1},
\]
then $Q_m \le T_m \le Q_m^{n-1}$. Define
\[
\delta_m := \max_{1\le k\le n-1} \left| \omega_k - \frac{p_{k,m}}{q_{k,m}} \right|.
\]

Proceeding as in the proof of Theorem~\ref{Thm5}, we obtain
a lower bound for the Wasserstein distance between the Mather measures:
\begin{equation}\label{eq:lowerbound_W1_prelim}
	W_1\bigl(\widetilde{\mathfrak{M}}_0, \widetilde{\mathfrak{M}}_{\delta_m}\bigr)
	\;\ge\;
	C \Bigl(
	W_1\bigl(\mathfrak{M}_0, \mathfrak{M}_{\delta_m}\bigr)
	+ | V_{\delta_m} - V_0 | \Bigr),
\end{equation}
where \(C>0\) denotes a constant independent of \(m\). With the above choice of rational approximations, one can construct periodic orbits whose associated Poincar\'e sections are divided into \(T_m\) equal parts within a single period. Therefore,
\begin{equation}\label{eq:lowerbound_base}
	W_1\bigl(\mathfrak{M}_0, \mathfrak{M}_{\delta_m}\bigr)
	\;\ge\; \frac{C}{T_m^{1/(n-1)}}
	\;\ge\; \frac{C}{Q_m}.
\end{equation}
On the other hand, the velocity difference is bounded, i.e.,
\begin{equation}\label{eq:lowerbound_velocity}
	\delta_m \;\le\; | V_{\delta_m} - V_0 | \;\le\; \sqrt{n-1}\,\delta_m.
\end{equation}
Combining \eqref{eq:lowerbound_W1_prelim}, \eqref{eq:lowerbound_base}, and \eqref{eq:lowerbound_velocity}, we obtain
\begin{equation}\label{eq:final_lowerbound_W1}
	W_1\bigl(\widetilde{\mathfrak{M}}_0, \widetilde{\mathfrak{M}}_{\delta_m}\bigr)
	\;\ge\; C \bigl( Q_m^{-1} + \delta_m \bigr),
\end{equation}
where the right hand side tends to zero as $m\rightarrow+\infty$.

Because each $\omega_k$ has Diophantine exponent $1$, we can choose the approximants so that $\delta_m \sim Q_m^{-2}$. Consequently,
\[
W_1\bigl(\widetilde{\mathfrak{M}}_0, \widetilde{\mathfrak{M}}_{\delta_m}\bigr) \ge C \delta_m^{1/2}.
\]

This gives a Hölder exponent of $1/2$ under these specific arithmetic conditions, which is larger than the exponents obtained in the two-dimensional case. This suggests that the regularity of Mather measures may degrade as the dimension increases, even under favorable arithmetic conditions.

\bigskip

\section{Statistical regularity of Mather measures for Tonelli Hamiltonians under perturbations}
\label{Sec: generalcase}

Motivated by the results obtained in the previous section, we now turn to a more general class of Tonelli Hamiltonian systems. We consider the parametrized Hamilton--Jacobi equation
\begin{equation}\label{H-Jepsilon}
	H_\varepsilon(x,du_\varepsilon(x)) = \alpha(\varepsilon), \qquad x \in \mathbb{T}^n,
\end{equation}
where \(\alpha(\varepsilon)\) denotes the ergodic constant associated with \(H_\varepsilon\) and \(\varepsilon\in\R^{m}\) (\(m\in\N\)).

We add the following assumptions:

\begin{enumerate}[label=(A\arabic*),ref=A\arabic*]
	\setcounter{enumi}{3}
	\item \label{ass:A4}
	 \(H_\varepsilon\) remains Tonelli for \(\varepsilon\) with sufficiently small norm. Moreover, \(H_\varepsilon \to H_0\) in the \(C^2\) topology as \(|\varepsilon| \to 0\).
	 
	\item \label{ass:A5} Equation \eqref{H-Jepsilon} associated with \(\varepsilon=0\) admits a classical solution \(u_0 \in C^{1,1}\), such that 
	\[
	\widetilde{\mathcal{T}}_\omega := \{ (x, du_0(x)) : x \in \mathbb{T}^n \}
	\]
	is a KAM torus with frequency \(\omega\in \mathcal{D}_{\sigma,\tau}\).
\end{enumerate}

\begin{defn}[KAM torus]
	A torus \(\widetilde{\mathcal{T}}_\omega\) is called a \emph{KAM torus} for $H_0$ with frequency \(\omega \in \mathbb{R}^n\) if it is invariant under the Hamiltonian flow \(\varphi_{H_0}^t\), and there exists a diffeomorphism \(\psi : \mathbb{T}^n \to \mathbb{T}^n\) such that for any \(x\in\T^n\),
	\[
	\psi\!\big(\pi \circ \varphi_{H_0}^t(x,du_0(x))\big) = \psi(x) + \omega t \pmod{\Z^n}.
	\]
	Equivalently, the following diagram commutes:
\begin{equation}\label{eq:conjugacy-diagram}
	\begin{tikzcd}
		\widetilde{\mathcal{T}}_\omega \arrow[r, "\varphi_{H_0}^t"] \arrow[d, "\psi\circ \pi"']
		& \widetilde{\mathcal{T}}_\omega \arrow[d, "\psi\circ \pi"] \\
		\mathbb{T}^n \arrow[r, "\rho_\omega^t"]
		& \mathbb{T}^n
	\end{tikzcd}
\end{equation}
\end{defn}

We recall several standard results from weak KAM theory that will be used repeatedly. We refer to \cite{Fathi05cv,Fathiweakkam,BernardC11,Davini06,DaviniFathi2016} for the proofs.
\begin{lem}\label{lemma:basic weakKAM}
	For the Hamilton--Jacobi equation \eqref{H-J} with a Tonelli Hamiltonian \(H\), we have the following results.
	
	\begin{enumerate}
		\item Any critical subsolution is differentiable on \(\mathcal{A}\), and it is in fact a solution of the Hamilton--Jacobi equation \eqref{H-J} there.
		\item Any two critical subsolutions differ by an additive constant on each connected component of the Aubry set; the constant may differ between different components.
		\item There exists a \(C^{1,1}\) critical subsolution of \eqref{H-J} which is strict outside the Aubry set \(\mathcal{A}\).
		\item The map \(\pi^{-1} : \mathcal{A} \to \widetilde{\mathcal{A}}\) is Lipschitz.
	\end{enumerate}
\end{lem}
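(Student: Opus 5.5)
This lemma collects standard facts of weak KAM theory for Tonelli Hamiltonians. The plan is to derive each item from the calibration/domination framework already recalled in Section~\ref{sec:preliminary}, using Lemma~\ref{lem:staticcurve} and the definition of $\widetilde{\mathcal A}$. The order is (1), (2), (4), with (3) treated separately.

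\textbf{Items (1) and (2).} Let $u$ be a critical subsolution and $x\in\mathcal A$. By Lemma~\ref{lem:staticcurve} there is a static curve $\gamma$ through $x$, and, as in the proof of Lemma~\ref{lem:welldefinedofA}, $u$ is calibrated along $\gamma$. Any subsolution is dominated:
\[
u(y)-u(z)\le h_t(z,y).
\]
Comparing this with equality along $\gamma$ on both sides of $x$ gives two $C^1$ functions, built from $u(\gamma(\pm s))$ and the action, which touch $u$ from above and from below at $x$. Since $L$ is Tonelli, $\gamma$ is a $C^2$ Euler--Lagrange extremal, so these one-sided bounds are themselves $C^1$ near $x$, by the semiconcavity of $h_t(\gamma(-s),\cdot)$ and the semiconvexity of $-h_t(\cdot,\gamma(s))$. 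It follows that $u$ is differentiable at $x$, with
\[
du(x)=\partial_vL(x,\dot\gamma(0)).
\]
Hence $H(x,du(x))=c(H)$ by the equality case of Fenchel's inequality. For (2), apply this to two subsolutions $u_1,u_2$: $u_1-u_2$ is differentiable on $\mathcal A$ with $d(u_1-u_2)=0$ there. Equivalently, both $u_i$ satisfy
\[
u_i(y)-u_i(z)=-h^\infty(y,z)
\]
for $z,y$ in the same static class, using \eqref{eq:defn-stat} and domination. So $u_1-u_2$ is constant on each static class, and hence on each connected component.

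\textbf{Item (4).} I would invoke Mather's graph theorem. Its key estimate, Mather's crossing lemma, says that two minimizers through nearby points with distant velocities can be shortcut to strictly lower the action. Applied to static curves, this gives
\[
|\dot\gamma_1(0)-\dot\gamma_2(0)|\le C|\gamma_1(0)-\gamma_2(0)|,
\]
which is exactly the Lipschitz property of $\pi^{-1}$ on $\mathcal A$. An alternative route: $u^-$ is semiconcave, $u^+$ is semiconvex, and the two coincide with the same derivative on $\mathcal A$. The classical lemma that a function squeezed between a semiconcave and a semiconvex function has Lipschitz derivative on the contact set then gives (4), and also recovers (1).

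\textbf{Item (3).} I expect this to be the main obstacle; it is Bernard's $C^{1,1}$ theorem. First I would construct a continuous subsolution that is strict outside $\mathcal A$, following Fathi--Siconolfi: take a countable convex combination of subsolutions, each strict near a given point off $\mathcal A$. I would then apply the Lasry--Lions regularization, in the form of the Lax--Oleinik semigroups $T^-_t\circ T^+_s$. For small $0<t<s$, this produces a $C^{1,1}$ function that is still a subsolution, since the semigroups preserve subsolutions, and that remains strict off $\mathcal A$, because strictness is an open condition and is preserved by the semigroups. The delicate points are uniformity of the semiconcavity and semiconvexity constants and checking that strictness survives; for these I would simply cite \cite{BernardC11}.
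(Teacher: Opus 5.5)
The paper gives no proof of this lemma; it only cites Fathi--Siconolfi, Fathi's book, Bernard, Davini--Siconolfi and Davini--Fathi--Iturriaga--Zavidovique. Your sketches of (1), (3) and (4) are the standard arguments from those sources, and they are acceptable as sketches.

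One small correction in (1): semiconcavity of $h_s(\gamma(-s),\cdot)$ does not make the upper barrier $C^1$ near $x$. What you actually use is that $u$ is touched at $x$ from above by a semiconcave function and from below by a semiconvex one. That alone already forces differentiability at $x$.

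The genuine gap is the last step of (2). Your $h^\infty$ argument is correct, but it proves that $u_1-u_2$ is constant on each \emph{static class} $\{y\in\mathcal A:\ h^\infty(x,y)+h^\infty(y,x)=0\}$. The words ``and hence on each connected component'' do not follow. Static classes are closed, so a connected component meeting only finitely many of them lies in one of them. A component may, however, meet infinitely many, and then nothing forces $u_1-u_2$ to be constant on it.

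Your ``equivalently'' does not rescue this. A function that is differentiable with zero differential at every point of a connected compact set need not be constant there: Whitney's classical example is a $C^1$ function critical along an arc but not constant on it. So $d(u_1-u_2)=0$ on $\mathcal A$ is strictly weaker than the $h^\infty$ identity you then use.

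In fact the step is false in the generality stated. Suppose $x_0,x_1$ lie in the same connected component of $\mathcal A$ but in different static classes. Then $w_j:=h^\infty(x_j,\cdot)$, $j=0,1$, are critical solutions with
\[
(w_0-w_1)(x_1)-(w_0-w_1)(x_0)=h^\infty(x_0,x_1)+h^\infty(x_1,x_0)>0 .
\]
Such components do occur: Mather's examples of Aubry sets, modelled on Whitney's arc, give Lagrangians of limited smoothness whose quotient Aubry set is a segment.

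So you should do one of two things. Either state and prove (2) for static classes, which your argument does. Or add a hypothesis guaranteeing that each connected component of $\mathcal A$ lies in a single static class. Examples are finitely many static classes, or total disconnectedness of the quotient Aubry set, known in low dimension (Mather) and under regularity assumptions (Fathi--Figalli--Rifford). Item (2) is not used later in the paper, so nothing downstream is affected.
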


Among the various perturbations of Tonelli Hamiltonians, two types are of most interest for our analysis (as already mentioned in the Introduction):
\begin{enumerate}[label=(\Roman*)]
	\item {\it Cohomological parametrization}
	\[
	H_c(x,p) = H(x, p + c), \qquad c \in H^1(\mathbb{T}^n,\mathbb{R})
	\]
	introduced by Mather in 1991~\cite{Mather91}, where \(|c| \ll 1\);
	
	\item {\it Ma\~n\'e's potential perturbation}
	\[
	H_\varepsilon(x,p) = H(x,p) + \varepsilon f(x), \qquad 0\leq \varepsilon \ll 1,
	\]
	proposed by Ma\~{n}\'{e} in 1992~\cite{Mane92} with \(f \in C^{2}(\mathbb{T}^n,\mathbb{R})\).
\end{enumerate}

We will respectively analyze these two cases and investigate the stability of the Mather measures. Throughout this section, we denote by \(\widetilde{\mathfrak{M}}(0)\), \(\widetilde{\mathfrak{M}}(\varepsilon)\) and \(\widetilde{\mathfrak{M}}(c)\) the sets of Mather measures associated with \(H\), \(H_\varepsilon\) and \(H_c\), respectively.  \(\widetilde{\mathfrak{M}}(0)\) is a singleton due to assumption \eqref{ass:A5}.

\subsection{Case I: Cohomological perturbations}
\label{subsection4.2}
In this subsection, we study \eqref{H-Jepsilon} in the case where it takes the following form:
\begin{equation}\label{E14}
	H\big(x, du_0(x)\big) = \alpha(0),
\end{equation}
together with its perturbation
\begin{equation}\label{E15}
	H\big(x, c + du_c(x)\big) = \alpha(c),
\end{equation}
where \(c \in \mathbb{R}^n\) with \(|c|\ll 1\).

\begin{thm}\label{Thm4}
	Assume that Assumptions \eqref{ass:A4} and \eqref{ass:A5} hold.
	For the Hamilton--Jacobi equation of type (I), we have
	\begin{equation}\label{E50}
		W_1\big(\widetilde{\mathfrak{M}}(c), \widetilde{\mathfrak{M}}(0)\big)
		\le C\,|c|^{\,\ell},
	\end{equation}
	where \(\ell=\frac{1}{\,n+\tau+1\,}\) and
	\(C=C(n,\omega,H_0)>0\) is a constant.
\end{thm}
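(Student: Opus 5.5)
The plan is to reduce the cohomological perturbation to a Ma\~n\'e-type Lagrangian, as announced in the structure of the paper, and then rerun the argument of Theorem~\ref{Thm1}.

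\emph{Reduction step.} Let $u_0\in C^{1,1}$ be the classical solution from \eqref{ass:A5}. For $H_c(x,p)=H(x,p+c)$, I would use $u_0$ (or a $C^{1,1}$ critical subsolution of $H_c$, via Lemma~\ref{lemma:basic weakKAM}(3)) to define the vector field
\[
V_c(x):=\partial_pH\big(x,c+du_c(x)\big),\qquad x\in\mathcal M(c),
\]
where $u_c$ is any critical subsolution of \eqref{E15}. By Lemma~\ref{lemma:basic weakKAM}(1),(4), $u_c$ is differentiable on $\mathcal A(c)$ and the Aubry set is a Lipschitz graph there. The Mather measures of $H_c$ are then supported on $\{(x,V_c(x))\}$ and are invariant under the flow of $\dot x=V_c(x)$ extended Lipschitzly to $\mathbb T^n$. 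This is the Lipschitz-perturbation setting of Section~\ref{Sec:Mane L}, and by Remark~\ref{rmkA2}(iii) only the values of $V_c$ on $\mathcal M(c)$ matter. The unperturbed flow is conjugated by $\psi$ to the rotation $\rho_\omega^t$, so I would work in the coordinates $y=\psi(x)$, in which $V_0\equiv\omega$. The bi-Lipschitz property of $\psi$ changes $W_1$ distances only by a constant factor.

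\emph{Key quantitative step.} I need $\sup_{x\in\mathcal M(c)}|V_c(x)-V_0(x)|\lesssim |c|^{1/2}$; this is the main obstacle. I would take $\mu_c\in\widetilde{\mathfrak M}(c)$ and compare actions. Since $u_0$ is $C^{1,1}$ and solves \eqref{E14}, strict convexity gives
\[
L(x,v)-\langle du_0(x),v\rangle+\alpha(0)\ \ge\ \kappa\,|v-\partial_pH(x,du_0(x))|^2 .
\]
Integrating against the closed measure $\mu_c$ kills the $du_0$ term. Minimality of $\mu_c$ for $L-\langle c,v\rangle$ gives
\[
\int L\,d\mu_c+\alpha(0)\le \langle c,\rho(\mu_c)\rangle+\alpha(c)-\alpha(0)=O(|c|),
\]
using that $\alpha$ is Lipschitz and rotation vectors are bounded. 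Therefore
\[
\int|v-V_0(x)|^2\,d\mu_c\lesssim|c| .
\]
To pass from this $L^2$ bound to a sup bound on the support, I would use the Lipschitz graph property, uniform in $c$ by \eqref{ass:A4}, together with invariance of $\mu_c$. Failing that, I would rewrite the argument of Theorem~\ref{Thm1} with $L^1$ control: estimate \eqref{E7} only needs $\int_0^t|\omega-V_c(\varphi^s_c x)|\,ds$ integrated against the invariant measure $\mu_c$. By invariance this equals $t\int|\omega-V_c|\,d\mu_c\le t\,C|c|^{1/2}$, so an $L^1(\mu_c)$ bound suffices. The second term of \eqref{eq:inequalitydecomposed} is handled the same way.

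\emph{Conclusion.} Writing $\delta=C|c|^{1/2}$, the proof of Theorem~\ref{Thm1} with Lemma~\ref{lem1}(i) and $\alpha=1$ gives $W_1\lesssim T^{-1/(\tau+n)}+T\delta$. Optimizing in $T$ yields $W_1\lesssim\delta^{1/(n+\tau+1)}$, which is $|c|^{1/(2(n+\tau+1))}$, weaker than \eqref{E50}. To reach the exponent $\ell=1/(n+\tau+1)$ I need $\delta\lesssim|c|$. I would obtain this by a sharper first-order comparison. Since $\alpha$ is differentiable at $0$ with $\nabla\alpha(0)=\omega$ (the torus is KAM), we have $\alpha(c)-\alpha(0)-\langle c,\omega\rangle=O(|c|^2)$, which improves the action bound. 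Alternatively, I would use the Lipschitz dependence of $du_c$ on $c$ along the Mather set, following \cite{YanETDS2014}, to get $|V_c-V_0|\lesssim|c|$ on $\mathcal M(c)$. This yields the stated bound, with the constant depending on $n$, $\omega$, and $H_0$ through $\kappa$, $\psi$, and the constant in Lemma~\ref{lem1}.
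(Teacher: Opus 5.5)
Your skeleton differs from the paper's route and is sound as far as it goes. The paper gets a \emph{sup-norm} bound $|c+du_c-du_0|\le C|c|^{1-\epsilon_0}$ on $\mathcal A(c)$ from Theorem~2.1 of \cite{YanETDS2014}. It then feeds $\delta=C|c|^{1-\epsilon_0}$ into Theorem~\ref{Thm1} through the Ma\~n\'e Lagrangian $L^c_{\rm sub}$. You instead use a variational $L^2(\mu_c)$ estimate together with an $L^1$ version of the proof of Theorem~\ref{Thm1}. That version is legitimate: once $V_0\equiv\om$, invariance of $\mathfrak M(c)$ means \eqref{E7} and the second term of \eqref{eq:inequalitydecomposed} only need $\int|V_c-V_0|\,d\mathfrak M(c)$. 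Your conjugation by $\psi$ also covers a point the paper passes over, since Theorem~\ref{Thm1} is proved only for constant $V_0$. However, as you note yourself, what you actually establish is $\int|v-V_0|^2\,d\mu_c\lesssim|c|$. That gives the exponent $\frac{1}{2(n+\tau+1)}$, and the step meant to reach $\ell=\frac{1}{n+\tau+1}$ is not justified.

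Concretely, differentiability of $\alpha$ at $0$ gives only $\alpha(c)-\alpha(0)-\langle c,\om\rangle=o(|c|)$, not $O(|c|^2)$. That improves your estimate only to $\|V_c-V_0\|_{L^2(\mu_c)}=o(|c|^{1/2})$. Also, the quantity your inequality really produces is $\alpha(0)-\alpha(c)+\langle c,\rho(\mu_c)\rangle$; the sign of $\alpha(c)-\alpha(0)$ in your display is reversed. So a bound at the single point $0$ would still need extra control of $\rho(\mu_c)-\om$. Your fallback, a Lipschitz bound on $du_c$ ``following'' \cite{YanETDS2014}, is more than the paper extracts from that reference, which is only $|c|^{1-\epsilon_0}$.

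The missing idea is a genuinely second-order test function, and your conjugacy supplies it. Differentiating the conjugacy at $t=0$ gives $D\psi(x)V_0(x)=\om$. Set $w_c(x):=\langle c,\psi(x)-x\rangle$, which is periodic when $\psi$ is homotopic to the identity. Then $c+du_0+dw_c=du_0+D\psi(x)^{T}c$, so $H(x,c+du_0+dw_c)\le\alpha(0)+\langle\om,c\rangle+K|c|^2$. Now make three changes to your argument:
\begin{itemize}
\item put $p=c+d(u_0+w_c)(x)$ instead of $du_0(x)$ in your Fenchel inequality;
\item integrate against the closed measure $\mu_c$;
\item use $\alpha(c)\ge\alpha(0)+\langle\om,c\rangle$, obtained by testing $\mu_0$ in the minimization for $L-\langle c,v\rangle$.
\end{itemize}
This gives $\kappa\int|v-\partial_pH(x,c+d(u_0+w_c))|^2\,d\mu_c\le K|c|^2$, hence $\int|V_c-V_0|\,d\mathfrak M(c)\lesssim|c|$. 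Your $L^1$ argument then yields exactly $|c|^{1/(n+\tau+1)}$. It even avoids the $\epsilon_0$-loss of the paper's route, whose $\delta=|c|^{1-\epsilon_0}$ strictly gives only $|c|^{(1-\epsilon_0)\ell}$.
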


\proof
By the Kantorovich--Rubinstein duality formula (see Remark \ref{rmk:optimaltransport}), we have  
\[
W_1\!\left( \widetilde{\mathfrak{M}}(c), \widetilde{\mathfrak{M}}(0) \right) 
= \sup_{\phi \in \mathrm{Lip}_1(T\mathbb{T}^n )} \left\{ 
\int_{T\mathbb{T}^n} \phi \, d\widetilde{\mathfrak{M}}(c) 
- \int_{T\mathbb{T}^n} \phi \, d\widetilde{\mathfrak{M}}(0) 
\right\}.
\]
Let \(u_c\) denote the viscosity solution of \eqref{E15}, and let 
\(\mathcal{A}(c)\) be the associated projected Aubry set. By 
Lemma~\ref{lemma:basic weakKAM}, the Mather set is contained in the Aubry set, 
and both 
\(d u_c\) and \(d u_0\) are well defined on \(\mathcal{A}(c)\) and 
\(\mathcal{A}(0)\).  
Then by the graph property of Mather measures, the above formula reduces to  
\[
\begin{aligned}
	W_1\!\big( \widetilde{\mathfrak{M}}(c), \widetilde{\mathfrak{M}}(0) \big) 
	&\leq \sup_{\phi \in \mathrm{Lip}_1(\mathbb{T}^n)} \Bigg\{ 
	\int_{\mathcal{A}(c)} 
	\phi\!\big(x, \partial_p H\big(x, d u_c(x)+ c\big)\big) 
	\, d\mathfrak{M}(c) \\
	&\quad - \int_{\mathcal{A}(0)} 
	\phi\!\big(x, \partial_p H\big(x, d u_0(x)\big)\big) 
	\, d\mathfrak{M}(0) 
	\Bigg\},
\end{aligned}
\]
where \( \mathfrak{M}(c) \) and \( \mathfrak{M}(0) \) denote the projected Mather measures associated with 
\( \widetilde{\mathfrak{M}}(c) \) and \( \widetilde{\mathfrak{M}}(0) \), respectively.
For notational convenience, we write
\[
V_c(x) := \partial_p H\!\big(x, d u_c(x)+ c\big), 
\quad x \in \mathcal{A}(c),
\]
and
\[
V_0(x) := \partial_p H\!\big(x, d u_0(x)\big), 
\quad x \in \mathbb{T}^n.
\]
By Lemma~\ref{lemma:basic weakKAM}, it suffices to consider these vector fields on the Aubry set:
\begin{align}\label{eqkeyinequality}
	W_1\!\left(\widetilde{\mathfrak{M}}(c), \widetilde{\mathfrak{M}}(0)\right)
	&\leq \sup_{\phi \in \mathrm{Lip}_1(\mathbb{T}^n )}
	\left\{
	\bigg[\int_{\mathcal{A}(c)} \phi\big(x, V_c(x)\big)\, d\mathfrak{M}(c)
	-\int_{\mathcal{A}(c)} \phi\big(x, V_0(x)\big)\, d\mathfrak{M}(c)\bigg]\right. \nonumber\\
	&\hspace{1.2cm}\left.
	+\bigg[\int_{\mathbb{T}^n} \phi\big(x, V_0(x)\big)\, d\mathfrak{M}(c)
	-\int_{\mathbb{T}^n} \phi\big(x, V_0(x)\big)\, d\mathfrak{M}(0)\bigg]
	\right\}.
\end{align}

We estimate the two bracketed terms separately. For the first term in inequality \eqref{eqkeyinequality}, since 
\(\phi \in \mathrm{Lip}_1(\mathbb{T}^n)\) and \(H\) is smooth, we have
\begin{align*}
\sup_{x\in \mathcal{A}(c)} \big| \phi(x, V_c(x)) - \phi(x, V_0(x)) \big| 
&\le \sup_{x\in \mathcal{A}(c)} \big| V_c(x) - V_0(x) \big| \\
&\le C \sup_{x\in \mathcal{A}(c)} \big| d u_c(x) + c - d u_0(x)  \big|.
\end{align*}
Here and in what follows, \(C\) denotes a positive constant that may change from line to line, but it is independent of \(c\).

By Theorem 2.1 of \cite{YanETDS2014}, for any sufficiently small \(\epsilon_0 > 0\), 
the viscosity solution \(u_c\) of \eqref{E15} satisfies the following estimate 
at all points where it is differentiable:
\[
\big| c + d u_c - d u_0 \big| 
\le C\, \|c\|^{1 - \epsilon_0}.
\]  
We immediately obtain
\begin{align}\label{estimateforVepsilon}
\sup_{x\in \mathcal{A}(c)}\|V_c(x)-V_0(x)\|
\le C\, |c|^{1 - \epsilon_0}.
\end{align}
Hence, we have
\[
\int_{\mathbb{T}^n} \phi(x, V_c(x)) \, d\mathfrak{M}(c) - \int_{\mathbb{T}^n} \phi(x, V_0(x)) \, d\mathfrak{M}(c) \leq \mathcal{O}(|c|^{1-\epsilon_0}).
\]
For the second term in inequality \eqref{eqkeyinequality}, note that it can be written as
\[
\sup_{\phi\in \mathrm{Lip}_1(\mathbb{T}^n)}\int_{\mathbb{T}^n} \phi(x, V_0(x)) \, d\mathfrak{M}(c)
- \int_{\mathbb{T}^n} \phi(x, V_0(x)) \, d\mathfrak{M}(0)
\;=\;
W_1\!\left(\mathfrak{M}(c),\,\mathfrak{M}(0)\right).
\]
By (3) of Lemma~\ref{lemma:basic weakKAM},
there exists a \(C^{1,1}\) critical subsolution \(\omega_c\) of \eqref{H-Jepsilon} which is strict outside the projected Aubry set. Although such a subsolution could be chosen rather random, we can impose $\om_c(x_0)=0$ at a fixed point $x_0\in\T^n$ and due to the superlinearity of $H(x,p)$, all possible $\om_c$ are uniformly Lipschitz and then uniformly bounded on $\T^n$. Moreover, due to (1) of Lemma~\ref{lemma:basic weakKAM} and Lemma \ref{lem:mathersetisinAubry}, the multiplicity of $\om_c$ makes no ambiguity on $\cM$. This allows us to define the Lipschitz vector field 
\be\label{eq:Ve-def}
\wh V_c : \T^n \to \R^n, \quad \wh V_c(x) = \partial_p H_{c}\!\big(x, d\omega_c(x)\big),
\ee
which yields a Ma\~{n}\'e-type modified Lagrangian
\begin{equation}\label{eq:Lsub}
	L_{\mathrm{sub}}^c(x,v) := \frac{1}{2}\,\big|v - \wh V_c(x)\big|^2.
\end{equation}
We denote by \(\wt{\mathfrak{M}}_{\mathrm{sub}}(c)\) the set of Mather measures associated with \(L_{\mathrm{sub}}^c\); then by part (1) of Lemma \ref{lemma:basic weakKAM}, we actually have \(\wt{\mathfrak M}(c)\subset\wt{\mathfrak  M}_{\rm sub}(c)\). By the triangle inequality for \(W_1\), inserting the projected Mather measure
\(\mathfrak{M}_{\mathrm{sub}}(c)\) associated with the Ma\~n\'e-type Lagrangian \(L_{\mathrm{sub}}\), we have
\[
W_1\!\left(\mathfrak{M}(c),\mathfrak{M}(0)\right)
\le W_1\!\left(\mathfrak{M}(c),\mathfrak{M}_{\mathrm{sub}}(c)\right)
+ W_1\!\left(\mathfrak{M}_{\mathrm{sub}}(c),\mathfrak{M}(0)\right).
\]
By Lemma~\ref{lemma:basic weakKAM}, we have \(V(c)=W(c)\) on \(\mathcal{A}(c)\). Hence the first term on the right-hand side vanishes:
\[
W_1\!\left(\mathfrak{M}(c),\,\mathfrak{M}_{\mathrm{sub}}(c)\right)=0.
\]
For the second term, using \eqref{estimateforVepsilon} and Theorem \ref{Thm1}, we have
\[
W_1\!\left(\mathfrak{M}_{\mathrm{sub}}(c),\mathfrak{M}(0)\right)
\leq \mathcal{O}\!\left(|c|^{\ell_2}\right)
\]
for some \(\ell_2\in(0,1)\). This proves the desired bound for the second term in inequality \eqref{eqkeyinequality}. By choosing \(\epsilon_0\) sufficiently small and combining estimate \eqref{estimateforVepsilon} with the bound obtained in Remark~\ref{rmkA2}, we deduce that \eqref{E50} holds with 
\(\ell = \frac{1}{\,n+\tau+1\,}.\) This completes the proof.\qed

\begin{rmk}\label{rmk:DiophantineImproved}
	In \cite{Bolotin03openproblem}, Walter Craig proposed the following open problem:
	\begin{quote}
 For a Tonelli Lagrangian \(L(x,v)\) and the associated \(\alpha\)-function \(\alpha: c\in H^1(\T^n,\R)\to \R\), how can we relate the regularity of \(\wt{\mathfrak M}(c)\) (with respect to \(c\)) to the arithmetic properties of \(\omega\in D^*\alpha(c) \subset H_1(\T^n,\R)\)?
	 \end{quote}
	 As we can see, Theorem \ref{Thm4} partially answers this problem by establishing the Hölder continuity of \(\wt{\mathfrak M}(c)\) with respect to \(c\in B(0,\delta)\) when \(\nabla\alpha(0)=\omega\in\mathcal D_{\sigma,\tau}\).
\end{rmk}

\subsection{Case II: Ma\~n\'e's perturbations}
In this subsection,
we consider perturbed Hamiltonians given by  
\[
H_\varepsilon(x, p) = H(p) + \varepsilon f(x), \qquad x \in \mathbb{T}^n,
\]
with \(0 \leq \varepsilon \ll 1\), where \(H : \mathbb{R}^n \to \mathbb{R}\) is a Tonelli Hamiltonian that depends only on the momentum variable \(p\), and \(f : \mathbb{T}^n \to \mathbb{R}\) is a \(C^2\)-smooth perturbation. 

\begin{thm}\label{Thm3}
	Suppose that Assumptions \eqref{ass:A4} and \eqref{ass:A5} hold, and let \(f \in C^{2}(\mathbb{T}^n,\mathbb{R})\).
	 Then there exists a modulus of continuity \(\Theta : \mathbb{R}^+ \to \mathbb{R}^+\) with \(\lim_{r\to 0^+}\Theta(r)=0\) such that
	\[
	W_1\!\bigl(\widetilde{\mathfrak{M}}(\varepsilon), \widetilde{\mathfrak{M}}(0)\bigr) \;\leq\; \Theta(\varepsilon).
	\]
\end{thm}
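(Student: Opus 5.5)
We mirror the proof of Theorem \ref{Thm4}: reduce to a Ma\~n\'e-type Lagrangian and apply Theorem \ref{Thm1}. The only quantitative input that changes is the closeness of the perturbed Lagrangian graph to the unperturbed one. For cohomological perturbations this came from \cite{YanETDS2014}. Here we have no rate, only a qualitative convergence, which explains the non-explicit modulus $\Theta$.

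\textbf{Step 1: reduce to projected measures.} By Kantorovich--Rubinstein duality and the graph property, we write $W_1(\widetilde{\mathfrak M}(\varepsilon),\widetilde{\mathfrak M}(0))$ as in \eqref{eqkeyinequality}. We set
\[
V_\varepsilon(x)=\partial_pH(du_\varepsilon(x)), \quad x\in\mathcal A(\varepsilon), \qquad V_0(x)=\partial_pH(du_0(x)),
\]
where $u_\varepsilon$ is a critical solution for $H_\varepsilon$; by Lemma \ref{lemma:basic weakKAM}(1) it is differentiable on $\mathcal A(\varepsilon)$. This yields
\[
W_1\bigl(\widetilde{\mathfrak M}(\varepsilon),\widetilde{\mathfrak M}(0)\bigr)\le C\,\eta(\varepsilon)+W_1\bigl(\mathfrak M(\varepsilon),\mathfrak M(0)\bigr), \qquad \eta(\varepsilon):=\sup_{x\in\mathcal A(\varepsilon)}|du_\varepsilon(x)-du_0(x)|.
\]

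\textbf{Step 2: show $\eta(\varepsilon)\to0$.} We normalize $u_\varepsilon(x_0)=0$. Superlinearity makes the family $\{u_\varepsilon\}$ equi-Lipschitz, and it is semiconcave with constants uniform in $\varepsilon$ since $f\in C^2$. Stability of viscosity solutions, together with $\alpha(\varepsilon)\to\alpha(0)$ (indeed $|\alpha(\varepsilon)-\alpha(0)|\le\varepsilon\|f\|_\infty$), implies that every subsequential limit is a critical solution for $H$. Under \eqref{ass:A5} the Aubry set of $H$ is the whole KAM torus, so by Lemma \ref{lemma:basic weakKAM}(2) this limit equals $u_0$ up to a constant. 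Hence $u_\varepsilon\to u_0$ uniformly.

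Upper semicontinuity of the Aubry set under $C^2$ perturbations, combined with the uniform semiconcavity, upgrades this to convergence of the differentials on $\mathcal A(\varepsilon)$. The argument is by contradiction. Suppose $x_k\in\mathcal A(\varepsilon_k)$ with $|du_{\varepsilon_k}(x_k)-du_0(x_k)|\ge\kappa>0$. The points $(x_k,du_{\varepsilon_k}(x_k))$ lie on the Aubry sets and therefore accumulate on $\widetilde{\mathcal A}(0)=\widetilde{\mathcal T}_\omega$. This contradicts the continuity of $du_0$.

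This gives a modulus $\eta(\varepsilon)\to0$, but no rate. This is the main obstacle: a H\"older rate here would require quantitative estimates in the spirit of \cite{YanETDS2014} for potential perturbations, which I do not expect to obtain at this level of generality. I would settle for the qualitative modulus.

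\textbf{Step 3: the Ma\~n\'e-type comparison.} By Lemma \ref{lemma:basic weakKAM}(3) we choose a $C^{1,1}$ critical subsolution $w_\varepsilon$ for $H_\varepsilon$ that is strict outside $\mathcal A(\varepsilon)$, normalized at $x_0$. We set $\widehat V_\varepsilon=\partial_pH(dw_\varepsilon)$ and $L^\varepsilon_{\rm sub}=\frac12|v-\widehat V_\varepsilon|^2$, so that, as in the proof of Theorem \ref{Thm4}, $\mathfrak M(\varepsilon)\subset\mathfrak M_{\rm sub}(\varepsilon)$.

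Theorem \ref{Thm1} is stated for $\sup_{\T^n}|\widehat V_\varepsilon-\omega|\le\delta$, and $V_0$ is not constant. We first conjugate via the KAM diffeomorphism $\psi$ of \eqref{eq:conjugacy-diagram}. Under $\psi$, the flow of $V_0$ becomes the rotation $\rho^t_\omega$ and the measure $\mathfrak M(0)$ becomes Lebesgue measure. Push-forward by the bi-Lipschitz map $\psi$ distorts $W_1$ only by a constant factor.

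By Remark \ref{rmkA2}(iii), it suffices to control $\widehat V_\varepsilon-V_0$ on $\mathcal M(\varepsilon)\subset\mathcal A(\varepsilon)$. There $dw_\varepsilon=du_\varepsilon$ by Lemma \ref{lemma:basic weakKAM}(1), so this difference is bounded by $C\eta(\varepsilon)$. Applying Theorem \ref{Thm1} (equivalently, rerunning estimates \eqref{E5}--\eqref{E8} in the conjugated coordinates) gives
\[
W_1\bigl(\mathfrak M(\varepsilon),\mathfrak M(0)\bigr)\le C\,\eta(\varepsilon)^{l}.
\]

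\textbf{Conclusion.} Setting $\Theta(\varepsilon):=C\bigl(\eta(\varepsilon)+\eta(\varepsilon)^{l}\bigr)$ gives a modulus with $\Theta(r)\to0$ as $r\to0^+$, which proves the theorem.
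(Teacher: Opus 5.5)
Your proof is correct and has the same architecture as the paper's. Both use the Kantorovich--Rubinstein splitting into a velocity term plus $W_1(\mathfrak M(\varepsilon),\mathfrak M(0))$, and the Ma\~n\'e-type Lagrangian $L^\varepsilon_{\rm sub}$ built from a $C^{1,1}$ subsolution strict outside $\mathcal A(\varepsilon)$. Both then apply Theorem~\ref{Thm1} to turn closeness of vector fields on the Mather set into a H\"older bound. Finally, both rely on a purely qualitative convergence of differentials, which is why $\Theta$ is not explicit.

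The step where you argue differently is that convergence of differentials. The paper passes to limits of $u_\varepsilon$-calibrated curves and asserts $\sup_{x}d_H(D^*u_\varepsilon(x),du_0(x))\to0$. You first prove $u_\varepsilon\to u_0$ uniformly, using stability plus uniqueness up to constants (since $\mathcal A(0)=\T^n$ is connected). This is exactly what the paper's limit-curve argument uses without proof. You also get uniformity in $x$ from a contradiction with moving points $x_k$, which the paper does not justify.

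Be careful with the principle you cite, though. Under $C^2$ perturbations it is the Ma\~n\'e set, not the Aubry set, that is upper semicontinuous in general. Here this is harmless: every semi-static curve of $H$ is $u_0$-calibrated, so the Ma\~n\'e set of $H$ equals $\widetilde{\mathcal T}_\omega$. More simply, your uniform semiconcavity already closes the argument. Take $p_k\in D^+u_{\varepsilon_k}(x_k)$ and pass to the limit in $u_{\varepsilon_k}(y)\le u_{\varepsilon_k}(x_k)+\langle p_k,y-x_k\rangle+K|y-x_k|^2$. Every limit of $p_k$ then lies in $D^+u_0(x)=\{du_0(x)\}$, which gives the convergence uniformly on all of $\T^n$.

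Your conjugation by $\psi$ is not needed in the paper's setting. There $H=H(p)$, so Jensen's inequality and strict convexity force $du_0\equiv0$. Hence $V_0\equiv\omega$ and Theorem~\ref{Thm1} applies verbatim. For a general $H(x,p)$, however, the conjugation is genuinely needed, because \eqref{E7} uses $\varphi_0^t(x)=x+\omega t$. In that case the second bracket in your Step~1 should be bounded by $(1+\mathrm{Lip}(V_0))\,W_1(\mathfrak M(\varepsilon),\mathfrak M(0))$, since $x\mapsto\phi(x,V_0(x))$ is no longer $1$-Lipschitz.

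Finally, $C(\eta+\eta^{l})$ tends to $0$ but need not be monotone or continuous. Replacing $\eta$ by $\sup_{s\le r}\eta(s)$ and smoothing, as the paper does, gives a genuine modulus of continuity.
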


\proof
According to the Kantorovich--Rubinstein duality formula (see Remark \ref{rmk:optimaltransport}), we have
\[
	W_1\!\bigl(\widetilde{\mathfrak{M}}(\varepsilon),\widetilde{\mathfrak{M}}(0)\bigr)
	= \sup_{\Phi\in\mathrm{Lip}_1(T\mathbb{T}^n)}
	\left\{ \int \Phi\,d\widetilde{\mathfrak{M}}(\varepsilon)-\int \Phi\,d\widetilde{\mathfrak{M}}(0) \right\}.
\]
By the graph property of Mather measures (Lemma~\ref{lemma:basic weakKAM}), each Mather measure is supported on a Lipschitz graph. Hence there exist vector fields
	\[
	V_\varepsilon(x):=\partial_p H_\varepsilon\big(x,du_\varepsilon(x)\big),\qquad
	V_0(x):=\partial_p H_0\big(x,du_0(x)\big),
	\]
	defined on the respective projected Aubry sets. Therefore the dual formula reduces to
	\[
	W_1\!\bigl(\widetilde{\mathfrak{M}}(\varepsilon),\widetilde{\mathfrak{M}}(0)\bigr)
	= \sup_{\phi\in\mathrm{Lip}_1(\mathbb{T}^n)}
	\Bigg\{ \int_{\mathcal{A}_\varepsilon} \phi(x,V_\varepsilon(x))\,d\mathfrak{M}(\varepsilon)
	- \int_{\mathcal{A}_0} \phi(x,V_0(x))\,d\mathfrak{M}(0) \Bigg\}
	\]
	and then, adding and subtracting an intermediate term,
	\begin{align*}
		W_1\!\bigl(\widetilde{\mathfrak{M}}(\varepsilon),\widetilde{\mathfrak{M}}(0)\bigr)
		\le{}& \sup_{\phi\in\mathrm{Lip}_1(\mathbb{T}^n)} \Bigg\{
		\Big[\int_{\mathcal{A}_\varepsilon}\big(\phi(x,V_\varepsilon)-\phi(x,V_0)\big)\,d\mathfrak{M}(\varepsilon)\Big] \\
		&\qquad +\Big[\int_{\mathbb{T}^n}\phi(x,V_0)\,d\mathfrak{M}(\varepsilon)
		- \int_{\mathbb{T}^n}\phi(x,V_0)\,d\mathfrak{M}(0)\Big]\Bigg\}.
	\end{align*}
	The first bracket is clearly bounded by
	\[
	\Big|\int_{\mathcal{A}_\varepsilon}\big(\phi(x,V_\varepsilon)-\phi(x,V_0)\big)\,d\mathfrak{M}(\varepsilon)\Big|
	\le \sup_{x\in\mathcal{A}_\varepsilon}\big|V_\varepsilon(x)-V_0(x)\big|.
	\]
	The second bracket equals the \(W_1\)-distance between the projected Mather measures \(\mathfrak{M}(\varepsilon)\) and \(\mathfrak{M}(0)\); hence
	\[
	\sup_{\phi\in\mathrm{Lip}_1(\mathbb{T}^n)}
	\Big\{\int_{\mathbb{T}^n}\phi(x,V_0)\,d\mathfrak{M}(\varepsilon)
	- \int_{\mathbb{T}^n}\phi(x,V_0)\,d\mathfrak{M}(0)\Big\}
	= W_1\!\bigl(\mathfrak{M}(\varepsilon),\mathfrak{M}(0)\bigr).
	\]
By Lemma~\ref{lemma:basic weakKAM},
there exists a \(C^{1,1}\) critical subsolution \(\omega_\varepsilon\) of \eqref{H-Jepsilon} which is strict outside the projected Aubry set. 
This allows us to define the Lipschitz vector field
\be\label{eq:Ve-def-1}
W_\varepsilon : \T^n \to \R^n, \quad W_\varepsilon(x) = \partial_p H_{\varepsilon}\!\big(x, d\omega_\varepsilon(x)\big),
\ee
which yields a Ma\~{n}\'e-type modified Lagrangian
\begin{equation}\label{eq:Lsub-1}
	L_{\mathrm{sub}}^\varepsilon(x,v) := \frac{1}{2}\,\big|v - W_\varepsilon(x)\big|^2.
\end{equation}
We denote by \(\wt{\mathfrak{M}}_{\mathrm{sub}}(\varepsilon)\) the set of Mather measures associated with \(L_{\mathrm{sub}}^\varepsilon\); then by part (1) of Lemma \ref{lemma:basic weakKAM}, we actually have \(\wt{\mathfrak M}(\varepsilon)\subset\wt{\mathfrak  M}_{\rm sub}(\varepsilon)\). 
By introducing the Ma\~n\'e-type Lagrangian \(L_{\mathrm{sub}}\) in \eqref{eq:Lsub-1} and the associated projected Mather measure \(\mathfrak{M}_{\mathrm{sub}}(\varepsilon)\), and combining Lemma~\ref{lemma:basic weakKAM} and Theorem~\ref{Thm1}, 
there exist a constant \(C>0\) independent of \(\varepsilon\) and a constant \(\ell \in (0,1)\) such that
\[
W_1\bigl(\mathfrak{M}(\varepsilon),\mathfrak{M}(0)\bigr) 
\le C \sup_{x\in\mathcal{A}_\varepsilon} \big|V_\varepsilon(x)-V_0(x)\big|^\ell.
\]
Combining the two estimates, we have 
	\[
	W_1\!\bigl(\widetilde{\mathfrak{M}}(\varepsilon),\widetilde{\mathfrak{M}}(0)\bigr)
	\le C \,\sup_{x\in\mathcal{A}_\varepsilon}
	\bigl|\partial_p H_\varepsilon(x,du_\varepsilon(x)) - \partial_p H_0(x,du_0(x))\bigr|^\ell.
	\]
	In the following, we estimate
	\[
	\sup_{x\in\mathcal{A}_\varepsilon}
	\bigl|\partial_p H_\varepsilon(x,du_\varepsilon(x)) - \partial_p H_0(x,du_0(x))\bigr|.
	\]
Since the Hamiltonian \(H_0\) is smooth in the momentum variable, there exists a constant \(C>0\), independent of \(\varepsilon\), such that
	\[
	\sup_{x\in\mathcal{A}_\varepsilon}
	\bigl|\partial_p H_\varepsilon(x,du_\varepsilon(x)) - \partial_p H_0(x,du_0(x))\bigr|
	\le C\sup_{x\in\mathcal{A}_\varepsilon}|du_\varepsilon(x)-du_0(x)|.
	\]
Notice that for every \(x\) where \(u_\varepsilon\) is differentiable we have \(du_\varepsilon(x)= D^*u_\varepsilon(x)\); hence
\[
\sup_{x\in\mathcal{A}_\varepsilon}|du_\varepsilon(x)-du_0(x)|
\le \sup_{x\in\mathbb{T}^n} d_H\bigl(D^*u_\varepsilon(x),du_0(x)\bigr),
\]
where \(d_H(\cdot, \cdot)\) denotes the Hausdorff distance\footnote{Let \(A,B \subset T^*\mathbb{T}^n\) be two nonempty compact sets. The \emph{Hausdorff distance} between \(A\) and \(B\) is defined by
	\[
	d_H(A,B) := \max \big\{ \rho(A,B), \rho(B,A) \big\}, \qquad
	\rho(A,B) := \sup_{a \in A} \inf_{b \in B} |a-b|_x.
	\] } in \(T^* \mathbb{T}^n\).
Combining the above inequalities yields
	\[
	\sup_{x\in\mathcal{A}_\varepsilon}
	\bigl|\partial_p H_\varepsilon(x,du_\varepsilon(x)) - \partial_p H_0(x,du_0(x))\bigr|
	\le C\sup_{x\in\mathbb{T}^n} d_H\bigl(D^*u_\varepsilon(x),du_0(x)\bigr),
	\]
	where \(C>0\) is independent of \(\varepsilon\) (and may change from line to line). 
	In the following, we claim that
	\[
	\lim_{\epsilon \to 0^+} \sup_{x \in \mathbb{T}^n} d_H(D^* u_\epsilon(x), du_0(x)) = 0.
	\]
	To see this, for any \(p \in D^* u_\epsilon(x)\), by weak KAM theory there exists a \(u_\epsilon\)-calibrated curve \(\gamma_\epsilon : (-\infty, 0] \to \mathbb{T}^n\) with \(\gamma_\epsilon(0) = x\) satisfying the calibration property
	\be\label{eq:calibrated}
	u_\epsilon(x) - u_\epsilon(\gamma_\epsilon(-t)) = \int_{-t}^0 L_\epsilon(\gamma_\epsilon(s), \dot{\gamma}_\epsilon(s)) \, ds \quad \text{for all } t \ge 0.
	\ee
	By Theorem 4.3.8 in \cite{Fathiweakkam}, each \(u_\varepsilon\)-calibrated curve satisfies
	\[
	\dot\gamma_\varepsilon(t) = \partial_p H_\varepsilon\bigl(\gamma_\varepsilon(t),du_\varepsilon(\gamma_\varepsilon(t))\bigr),
	\qquad -\infty < t < 0.
	\]
By the calibration identity \eqref{eq:calibrated} and the superlinearity of \(L_\varepsilon\), the family \(\{\gamma_\varepsilon\}_{0<\varepsilon \ll 1}\) is uniformly bounded and equi-Lipschitz. Hence, any limit curve of \(\gamma_\varepsilon\) as \(\varepsilon \to 0\) is \(C^1\) and \(u_0\)-calibrated. Since \(u_0\) is smooth, such a limit curve is unique, and its initial velocity corresponds exactly to the covector \(du_0(x)\). Therefore,
\[
\lim_{\varepsilon \to 0^+} d_H\bigl(D^*u_\varepsilon(x), \{du_0(x)\}\bigr) = 0,
\]
that is, \(D^*u_\varepsilon(x)\) converges in Hausdorff distance to \(\{du_0(x)\}\).

	We define
	\[
	\theta(\epsilon) := \sup_{x \in \mathbb{T}^n} d_H(D^* u_\epsilon(x), du_0(x)), \qquad \epsilon \ge 0,
	\]
	and then set
	\[
	\eta(r) := \sup_{0 \le s \le r} \theta(s), \qquad r \ge 0.
	\]
	By construction, \(\eta\) is nondecreasing and satisfies \(\eta(r) \ge \theta(r)\) for all \(r \ge 0\).
	Since \(\eta:[0,\varepsilon_0]\to \R^+\) is monotone increasing and bounded, we define
	\[
	\Theta(r):=\int_0^1 \eta\bigl(r(1+t)\bigr)\varphi(t)\,\mathrm{d}t, \qquad r\ge0,
	\]
	where \(\varphi \in C^\infty([0,1])\) is a nonnegative mollifier with \(\int_0^1 \varphi = 1\).	
	It is clear that \(\Theta\) dominates \(\eta\). Indeed, since \(r(1+t)\ge r\) for all \(t\in[0,1]\) and \(\eta\) is monotone, we have \(\eta(r(1+t))\ge \eta(r)\); hence
	\[
	\Theta(r)\;=\;\int_0^1 \eta\bigl(r(1+t)\bigr)\varphi(t)\,\mathrm{d}t
	\;\ge\;\int_0^1 \eta(r)\varphi(t)\,\mathrm{d}t
	\;=\;\eta(r).
	\]	
	Monotonicity follows immediately. Whenever \(0\le r_1<r_2\), one has \(r_1(1+t)<r_2(1+t)\) for every \(t\), and therefore \(\eta(r_1(1+t))\le \eta(r_2(1+t))\). Integrating against \(\varphi\) gives \(\Theta(r_1)\le \Theta(r_2)\).  
	To establish continuity, fix \(r_0>0\). Since \(\eta\) has at most countably many discontinuities, the set
	\[
	N=\{t\in[0,1]:\; r_0(1+t)\text{ is a discontinuity of }\eta\}
	\]
	has Lebesgue measure zero. For \(r_n\to r_0\), the functions \(h_n(t):=\eta(r_n(1+t))\) converge pointwise to \(h(t):=\eta(r_0(1+t))\) on \([0,1]\setminus N\). By Egorov's theorem, for any \(\delta_1>0\) there exists a measurable set \(E\) with \(m(E)<\delta_1\) such that the convergence is uniform on \(F:=[0,1]\setminus E\). On the other hand, since \(\varphi\in L^1([0,1])\), the absolute continuity of the Lebesgue integral ensures that there exists \(\delta_1>0\) such that if \(m(A)<\delta_1\) then
	\[
	\int_A \varphi(t)\,dt < \frac{\varepsilon}{4M}, \qquad M=\sup_{[0,\varepsilon_0]}\eta.
	\]
	Therefore, for \(|r-r_0|\) sufficiently small, one obtains
	\[
	|\Theta(r)-\Theta(r_0)| 
	= \left|\int_0^1 \bigl(\eta(r(1+t))-\eta(r_0(1+t))\bigr)\varphi(t)\,dt\right|
	\le I_F+I_E,
	\]
	where
	\[
	I_F := \int_F |\eta(r(1+t))-\eta(r_0(1+t))|\varphi(t)\,dt, 
	\quad
	I_E := \int_E |\eta(r(1+t))-\eta(r_0(1+t))|\varphi(t)\,dt.
	\]
	On \(F\), uniform convergence gives
	\[
	I_F \le \sup_{t\in F} |\eta(r(1+t))-\eta(r_0(1+t))| \cdot \int_F \varphi(t)\,dt 
	\le \frac{\varepsilon}{2}\cdot 1 = \frac{\varepsilon}{2}.
	\]
	On \(E\), boundedness of \(\eta\) yields
	\[
	I_E \le 2M \int_E \varphi(t)\,dt \le 2M\cdot \frac{\varepsilon}{4M} = \frac{\varepsilon}{2}.
	\]
	Hence \(|\Theta(r)-\Theta(r_0)|<\varepsilon\), proving continuity at \(r_0>0\).
	
	Next we show that \(\Theta\) is continuous at \(0\). Note that for \(0\le r\le \tfrac{\varepsilon_0}{2}\),
	\[
	0\le \Theta(r)\le \eta(2r)\int_0^1\varphi(t)\,\mathrm{d}t=\eta(2r).
	\]
Since \(\eta(s)\to0\) as \(s\to0^+\), we conclude that \(\Theta(r)\to0\) as \(r\to0^+\). Setting \(\Theta(0)=0\) ensures continuity at \(0\). Consequently, \(\Theta:[0,\varepsilon_0]\to\mathbb{R}_+\) is continuous, nondecreasing, satisfies \(\Theta(r)\ge\eta(r)\) for all \(r\), and \(\Theta(r)\to 0\) as \(r\to 0^+\). Therefore, up to a multiplicative constant, \(\Theta^\ell\) can be taken as the required continuous modulus, which proves Theorem~\ref{Thm3}.\qed


\section{Lipschitz regularity and Linear response under the KAM theory}\label{Sec:Linearresponse}

In this section, we start with a Tonelli Hamiltonian $H\in C^\infty(\mathbb{R}^n,\mathbb{R})$  of the form $H=H(p)$, satisfying $\nabla H(0) = \omega$, where $\omega$ is a fixed Diophantine vector. Consequently, $u_0\equiv \text{const}$ is a smooth solution of 
\[
H(du_0) = \alpha(0)
\]
and 
\[
\widetilde{\mathcal{T}}_\omega := \{(x, 0) \mid x \in \mathbb{T}^n\}
\]
is a KAM torus with frequency $\omega$. We then perturb $H_0$ by 
\begin{equation}\label{eq:pert-ham}
H_\varepsilon(x,p) = H(p) + \varepsilon f(x),\quad |\varepsilon|\ll 1
\end{equation}
with $f\in C^\infty(\mathbb{T}^n,\mathbb{R})$ and consider the corresponding Hamilton--Jacobi equation
\begin{equation}\label{eq:HJCepsilon}
	H_\varepsilon\bigl(x, du_{\varepsilon} + c_\varepsilon\bigr) = \alpha(\varepsilon)
\end{equation}
for suitable $c_\varepsilon\in H^1(\mathbb{T}^n,\mathbb{R})$ and $\alpha(\varepsilon)\in\mathbb{R}$. Our goal, by applying classical KAM theory, is to select $c_\varepsilon$ so that for each sufficiently small $|\varepsilon|\ll 1$, there exists a perturbed KAM torus
\[
\widetilde{\mathcal{T}}_\omega^\varepsilon := \{(x,\,c_\varepsilon + du_\varepsilon(x)) \mid x \in \mathbb{T}^n\}
\]
which remains conjugate to the linear flow on $\mathbb{T}^n$ with frequency $\omega$. For each $\varepsilon $, the set $\widetilde{\mathfrak{M}}(\varepsilon)$ is a singleton, and so is $\mathfrak{M}(\varepsilon)$. We are interested in the linear response of the Mather measure. More precisely, we say that $\widetilde{\mathfrak{M}}(\varepsilon)$ admits a linear response if the limit 
\[
\widetilde{\mathfrak{M}}_1 := 
\lim_{\varepsilon \to 0} 
\frac{\widetilde{\mathfrak{M}}(\varepsilon)-\widetilde{\mathfrak{M}}(0)}{\varepsilon}
\]
exists in the sense of weak convergence of measures. In this case, the family of measures expands as
\[
\widetilde{\mathfrak{M}}(\varepsilon)
= \widetilde{\mathfrak{M}}(0) + \varepsilon \widetilde{\mathfrak{M}}_1 + o(\varepsilon).
\]
Equivalently, for any test function $g \in C_c^\infty(T\mathbb{T}^n,\mathbb{R})$, one has
\begin{equation}\label{def:Mtilde}
\lim_{\varepsilon \to 0}
\frac{\int g\, d\widetilde{\mathfrak{M}}(\varepsilon) - \int g\, d\widetilde{\mathfrak{M}}(0)}{\varepsilon}
= \int g \, d\widetilde{\mathfrak{M}}_1.
\end{equation}

\begin{lem}[KAM theorem]\label{lem:KAM for T^n flow}
Suppose $\omega\in\mathcal D_{\sigma,\tau}$ and $\|f\|_{C^{2\tau+3+\delta}(\mathbb{T}^n,\mathbb{R})}\leq 1$ for arbitrary $\delta>0$. There exists a positive constant $\varepsilon_0:=\varepsilon_0(\sigma,\tau,\|H\|_{C^2})<1$ such that for any $\varepsilon\in(-\eps_0,\varepsilon_0)$, the associated Hamiltonian $H_\varepsilon$ defined in \eqref{eq:pert-ham} possesses a KAM torus 
\[
\widetilde{\mathcal{T}}_\omega^\varepsilon=\{(x, p_\varepsilon(x)) \mid x\in\mathbb{T}^n\}.
\] 
Precisely, there exists a diffeomorphism 
\begin{equation}\label{eq:psi-expansion}
\psi_\varepsilon:=\operatorname{id}+\varepsilon\psi_1(\cdot,\varepsilon):\mathbb{T}^n\rightarrow\mathbb{T}^n
\end{equation}
with $\|\psi_1\|_{C^{1+\delta}(\mathbb{T}^n,\mathbb{R}^n)}\leq \kappa_1$ for some constant $\kappa_1:=\kappa_1(\varepsilon_0)>0$
such that the graph in \eqref{eq:conjugacy-diagram} is commutative in the sense that $\psi_\varepsilon \circ \pi\circ\phi_{H_\varepsilon}^t=\rho_\omega^t\circ \psi_\varepsilon \circ \pi$ for any $t\in\mathbb{R}$ and $(x, p)\in T^*\mathbb{T}^n$. Moreover, $\widetilde{\mathcal{T}}_\omega^\varepsilon$ supports a unique ergodic Mather measure $\mu_\varepsilon\in\widetilde{\mathfrak{M}}(\varepsilon)$ and
\[
p_\varepsilon(x) = c_\varepsilon + du_\varepsilon(x), \quad \forall x\in\mathbb{T}^n
\]
for a uniquely identified $c_\varepsilon$ formally expressed by 
\begin{equation}\label{def:cvarepsilon}
c_\varepsilon = \varepsilon c_1 + o(\varepsilon)
\end{equation}
and a unique classical solution $u_\varepsilon$ of \eqref{eq:HJCepsilon} (differing by constants) expressed by 
\[
u_\varepsilon = u_0 + \varepsilon u_1(x) + o(\varepsilon).
\] 
Moreover, there exists a constant $\kappa_2:=\kappa_2(\kappa_1)>0$ such that $\|u_1\|_{C^{2+\delta}(\mathbb{T}^n,\mathbb{R})}\leq \kappa_2$ and $|c_1|\leq \kappa_2$.
\end{lem}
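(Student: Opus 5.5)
The approach is the classical one for KAM with a prescribed Diophantine frequency in an integrable Hamiltonian $H(p)$ perturbed by $\varepsilon f(x)$. We look for an invariant Lagrangian graph $p=c_\varepsilon+du_\varepsilon(x)$ on which the flow is conjugate to $\rho^t_\omega$. The frequency is kept fixed and the cohomology class $c_\varepsilon$ is used as the free parameter, which is the usual ``translated torus'' or counterterm device. The tori must be Lagrangian and graph-like, hence exact up to the constant $c_\varepsilon$. So I would reduce the problem to solving the parametrized Hamilton--Jacobi equation \eqref{eq:HJCepsilon} together with the conjugacy condition. This is done by a Newton (KAM) iteration in finite smoothness, of the Moser/Zehnder/P\"oschel type for $C^k$ perturbations. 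The threshold $C^{2\tau+3+\delta}$ is what such finite-differentiability schemes require, via analytic smoothing in the style of Jackson--Moser--Zehnder.

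\emph{First-order step.} Substitute $p=c+du$ and expand:
\[
H(c+du)+\varepsilon f=H(0)+\langle\omega,c+du\rangle+\varepsilon f+O(|c+du|^2).
\]
At order $\varepsilon$ this gives the cohomological equation
\[
\langle\omega,\partial_x u_1\rangle=-(f-\hat f_0)
\]
with $\alpha'(0)=\hat f_0$. Its Fourier solution is
\[
\hat u_{1,k}=\frac{i\hat f_k}{\langle k,\omega\rangle},\qquad k\neq 0 .
\]
The Diophantine condition \eqref{def:Diophantine in Rn} gives a loss of $\tau+n/2+\delta$ derivatives. This yields the bound on $\|u_1\|_{C^{2+\delta}}$ once $f\in C^{2\tau+3+\delta}$, after allowing for the loss in both the $u$ and conjugacy components. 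The class $c_1$ is fixed at the next order. To keep the frequency equal to $\omega$, the averaged correction to $\partial_pH$ must vanish, which gives a linear equation for $c_1$:
\[
\partial_{pp}H(0)c_1=-\langle \partial^3H\text{-type average of }(du_1)^2\rangle .
\]
This equation is solvable because $\partial_{pp}H(0)$ is invertible by the Tonelli assumption. This is the twist condition that lets the cohomology class absorb the frequency drift.

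\emph{Iteration and convergence.} Each step has three parts. First, solve a linearized cohomological equation for the correction $(\Delta u,\Delta c)$ on a smoothed perturbation. Second, use the invertibility of $\partial_{pp}H$ near $0$ to adjust $c$ so that the frequency stays $\omega$. Third, check that the error becomes quadratic, up to the smoothing error. With the standard choice of smoothing parameters the errors decay superexponentially for $|\varepsilon|<\varepsilon_0(\sigma,\tau,\|H\|_{C^2})$, and the limit gives $u_\varepsilon$ and $c_\varepsilon$. The conjugacy $\psi_\varepsilon=\mathrm{id}+\varepsilon\psi_1$ is obtained by solving $\langle\omega,\partial\psi\rangle=\partial_pH(c_\varepsilon+du_\varepsilon)-\omega$ by the same Fourier inversion. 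This loses another $\tau$ derivatives, which explains why $\psi_1$ only lands in $C^{1+\delta}$. Smooth dependence on $\varepsilon$ of all the estimates gives the expansions $c_\varepsilon=\varepsilon c_1+o(\varepsilon)$ and $u_\varepsilon=u_0+\varepsilon u_1+o(\varepsilon)$. Here $\kappa_2$ is controlled by $\kappa_1$ through the cohomological bounds.

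\emph{Mather measure and uniqueness.} The torus is a $C^1$ graph of a closed form of class $c_\varepsilon$, invariant and conjugate to a Diophantine rotation, so the Haar measure pulled back by $\psi_\varepsilon$ is its unique invariant measure. Since $u_\varepsilon$ is a classical solution of \eqref{eq:HJCepsilon}, every $c_\varepsilon$-Mather measure lies on the graph of $c_\varepsilon+du_\varepsilon$ by Lemma~\ref{lemma:basic weakKAM}. Being invariant, each such measure must be this pullback measure, which gives uniqueness. Moreover $u_\varepsilon$ is the unique solution up to constants, because the projected Aubry set is all of $\mathbb{T}^n$ and is connected. The uniqueness of $c_\varepsilon$ follows from the strict convexity of $\alpha$ along rotation vector $\omega$, or equivalently from the local invertibility used above.

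The main obstacle is the finite-smoothness convergence of the Newton scheme with the sharp regularity count $2\tau+3+\delta$. My first attempt would be to cite P\"oschel's or Salamon's $C^k$ KAM theorem with prescribed frequency rather than reproduce it, and to check only that the counterterm $c$ corresponds to their frequency-matching parameter.
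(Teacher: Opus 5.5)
Your overall route matches the paper's. The paper gives no argument beyond citing P\"oschel's Theorem~A \cite{Po} for the torus, conjugacy and regularity, and \cite{FathiSorrentino2009} for the uniqueness of $c_\varepsilon$ and $u_\varepsilon$. Deferring the finite-smoothness Newton scheme to P\"oschel or Salamon is therefore legitimate. Your arguments for the uniqueness of the Mather measure (unique ergodicity on the graph) and of $u_\varepsilon$ up to constants (projected Aubry set equal to the connected $\mathbb{T}^n$, Lemma~\ref{lemma:basic weakKAM}(2)) are fine.

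Two pieces of your own sketch are wrong, however. The first is the order bookkeeping for $c_1$. The average of $\partial^3H(0)(du_1,du_1)$ is an $O(\varepsilon^2)$ contribution to $\partial_pH(c_\varepsilon+du_\varepsilon)$, so it enters the equation for $c_2$, not $c_1$. At order $\varepsilon$ the frequency condition already reads $\partial_{pp}H(0)c_1=0$, so $c_1=0$. You can see this in your own first-order conjugacy equation: the left side $\langle\omega,\nabla\psi_1\rangle$ has zero mean, while the order-$\varepsilon$ right side $\partial_{pp}H(0)(c_1+du_1)$ has mean $\partial_{pp}H(0)c_1$. Equivalently, the rotation vector $\int\partial_pH(c_\varepsilon+du_\varepsilon)\,d\mu_\varepsilon$ must equal $\omega$, yet it is $\omega+\varepsilon\,\partial_{pp}H(0)c_1+o(\varepsilon)$. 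This is also the only value consistent with your $\alpha'(0)=\hat f_0$: averaging $\langle\omega,c_1+du_1\rangle+f=\alpha'(0)$ gives $\alpha'(0)=\hat f_0+\langle\omega,c_1\rangle$. Relatedly, your equation for $\psi_\varepsilon$ is only the linearization of the exact transport equation $D\psi_\varepsilon\,V_\varepsilon=\omega$ along $V_\varepsilon=\partial_pH(c_\varepsilon+du_\varepsilon)$. The conjugacy must come from the KAM embedding itself, not from a separate constant-coefficient Fourier inversion.

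The second problem is that the uniqueness of $c_\varepsilon$ is not justified. $\alpha$ need not be strictly convex, and ``strict convexity along $\omega$'' merely restates the claim $\partial\beta_\varepsilon(\omega)=\{c_\varepsilon\}$. Invertibility of $\partial_{pp}H$ only gives uniqueness among tori near $\{p=0\}$ produced by the scheme. The missing idea, which \cite{FathiSorrentino2009} supplies, is that $\mu_\varepsilon$ has full projected support.

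Suppose $\omega\in\partial\alpha_\varepsilon(c')$. Then some $c'$-Mather measure $\nu$ has rotation vector $\omega$. The Fenchel equality $\alpha_\varepsilon(c')+\beta_\varepsilon(\omega)=\langle c',\omega\rangle$ gives $\int L_\varepsilon\,d\nu=\beta_\varepsilon(\omega)$. Since $c_\varepsilon\in\partial\beta_\varepsilon(\omega)$, $\nu$ is also a $c_\varepsilon$-Mather measure, so $\nu=\mu_\varepsilon$ by the uniqueness you already proved. Hence the projected $c'$-Aubry set is all of $\mathbb{T}^n$. Every $c'$-critical subsolution $u'$ is then differentiable everywhere with $c'+du'(x)=p_\varepsilon(x)$, by the graph property and invertibility of the Legendre transform. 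Integrating over $\mathbb{T}^n$ gives $c'=\int_{\mathbb{T}^n}p_\varepsilon\,dx=c_\varepsilon$.
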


Thislemma is a straightforward adaptation of Theorem A in \cite{Po}, and the uniqueness of the cohomology class $c_\varepsilon$ and $u_\varepsilon$ as the classical solution was proved in \cite{FathiSorrentino2009}.

\begin{prop}\label{Prop:Ceplison}
	Suppose that assumptions \eqref{ass:A4} and \eqref{ass:A5} hold. Let $f \in C^{\infty}(\mathbb{T}^n,\mathbb{R})$ be the smooth potential associated with the perturbed Hamiltonian \eqref{eq:pert-ham} and \( c_\varepsilon = \varepsilon c_1 + o(\varepsilon) \) be the cohomology class associated with the perturbed equation \eqref{eq:HJCepsilon}.
	Then we have  
	\[
	\alpha(\varepsilon) = \alpha(0) + \varepsilon\bigl([f] + \langle \omega, c_1 \rangle \bigr) + \mathcal{O}(\varepsilon^2),
	\]
	where $[f]$ denotes the average of $f(x)$ over the torus $\mathbb{T}^n$.	
\end{prop}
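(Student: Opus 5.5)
The plan is to expand the Hamilton--Jacobi equation \eqref{eq:HJCepsilon} in powers of $\varepsilon$, using the expansions from Lemma~\ref{lem:KAM for T^n flow}, and then average over $\mathbb{T}^n$ so that the derivative term drops out.

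\textbf{Step 1: Taylor expansion.} By Lemma~\ref{lem:KAM for T^n flow} we may write $c_\varepsilon + du_\varepsilon(x) = \varepsilon\bigl(c_1 + du_1(x)\bigr) + r_\varepsilon(x)$, where $du_0\equiv 0$ and $r_\varepsilon = o(\varepsilon)$ uniformly in $x$. Since $H$ is smooth and depends only on $p$, Taylor's formula at $p=0$ gives
\[
H\bigl(c_\varepsilon+du_\varepsilon(x)\bigr)=H(0)+\langle \nabla H(0),\,c_\varepsilon+du_\varepsilon(x)\rangle+\mathcal{O}\bigl(|c_\varepsilon+du_\varepsilon(x)|^2\bigr).
\]
The remainder is $\mathcal{O}(\varepsilon^2)$ uniformly in $x$, provided $\|c_\varepsilon+du_\varepsilon\|_{C^0}=\mathcal{O}(\varepsilon)$.

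\textbf{Step 2: Plug into the equation and average.} Using $H(0)=\alpha(0)$ (because $u_0$ is constant) and $\nabla H(0)=\omega$, equation \eqref{eq:HJCepsilon} becomes
\[
\alpha(\varepsilon)=\alpha(0)+\langle\omega,c_\varepsilon\rangle+\langle\omega,du_\varepsilon(x)\rangle+\varepsilon f(x)+\mathcal{O}(\varepsilon^2).
\]
Next I integrate over $\mathbb{T}^n$ with respect to Lebesgue measure. Since $u_\varepsilon$ is periodic, $\int\langle\omega,du_\varepsilon\rangle\,dx=0$. The constants are unaffected, and $\int f\,dx=[f]$. This yields
\[
\alpha(\varepsilon)=\alpha(0)+\langle\omega,c_\varepsilon\rangle+\varepsilon[f]+\mathcal{O}(\varepsilon^2).
\]

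\textbf{Step 3: Replace $c_\varepsilon$.} Substituting $c_\varepsilon=\varepsilon c_1+o(\varepsilon)$ gives the stated leading term. To upgrade the error to $\mathcal{O}(\varepsilon^2)$, I need $c_\varepsilon-\varepsilon c_1=\mathcal{O}(\varepsilon^2)$.

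The main obstacle is Step 3, together with the uniform $\mathcal{O}(\varepsilon)$ control of $du_\varepsilon$ needed in Step 1. Lemma~\ref{lem:KAM for T^n flow} only records expansions with $o(\varepsilon)$ remainders. I would therefore go back to the KAM construction of \cite{Po}, in which the torus and the cohomology class depend smoothly (at least $C^2$) on $\varepsilon$ for $\omega\in\mathcal{D}_{\sigma,\tau}$ and $f$ smooth. Then $c_\varepsilon$ is $C^2$ in $\varepsilon$ with $c_0=0$, so $c_\varepsilon=\varepsilon c_1+\mathcal{O}(\varepsilon^2)$. Likewise $\|du_\varepsilon\|_{C^0}\le C\varepsilon$, which follows from the bounds $\|u_1\|_{C^{2+\delta}}\le\kappa_2$ and $|c_1|\le\kappa_2$ together with a uniform bound on the second-order terms.

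If only $o(\varepsilon)$ control were available, the conclusion would hold with $o(\varepsilon)$ in place of $\mathcal{O}(\varepsilon^2)$. This is enough to identify $\alpha'(0)=[f]+\langle\omega,c_1\rangle$.

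As a consistency check, the $\mathcal{O}(\varepsilon)$ part of the non-averaged equation gives the cohomological equation $\langle\omega,du_1\rangle=[f]-f$. It is solvable thanks to the Diophantine condition, which confirms that $u_1$ is the usual first-order KAM correction.
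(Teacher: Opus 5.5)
Your argument is correct, modulo the point you flag yourself, but it takes a different route from the paper.

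\textbf{How the paper argues.} The paper never Taylor-expands the true solution $u_\varepsilon$. Instead it builds an explicit approximate solution $\vartheta_\varepsilon=u_0+\varepsilon\vartheta_1$, where $\vartheta_1$ is obtained by Fourier series from the cohomological equation $\langle\omega,d\vartheta_1\rangle=[f]-f$; this is where non-resonance enters. As a result, $H(c_\varepsilon+d\vartheta_\varepsilon)+\varepsilon f(x)$ equals $\alpha(0)+\varepsilon([f]+\langle\omega,c_1\rangle)$ up to the error, uniformly in $x$. The paper then evaluates at the minimum and maximum points of $u_\varepsilon-\vartheta_\varepsilon$. There the viscosity super- and subsolution inequalities sandwich $\alpha(\varepsilon)$.

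\textbf{What each route buys.} You replace both the small-divisor step and the comparison step by a single average against Lebesgue measure, which kills $\langle\omega,du_\varepsilon\rangle$ for free. The price is that you need the a priori bound $\|c_\varepsilon+du_\varepsilon\|_{C^0}=\mathcal{O}(\varepsilon)$ on the true solution to control the Taylor remainder. That bound must come from the KAM lemma. The equation alone does not give it: since $\nabla H(0)=\omega\neq 0$, the level set $\{H=H(0)\}$ is not localized near $p=0$. The paper's comparison argument uses only that $u_\varepsilon$ is a viscosity solution, because every expansion is performed on the smooth function $\vartheta_\varepsilon$. Consequently it yields $\alpha(\varepsilon)=\alpha(0)+\varepsilon[f]+\langle\omega,c_\varepsilon\rangle+\mathcal{O}(\varepsilon^2+|c_\varepsilon|^2)$ for any class $c_\varepsilon$, not only the KAM one.

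\textbf{The issue in your Step 3 is shared by the paper.} Its proof also assumes only $c_\varepsilon=\varepsilon c_1+o(\varepsilon)$, yet records an $\mathcal{O}(\varepsilon^2)$ remainder. So in both arguments the stated form requires $c_\varepsilon-\varepsilon c_1=\mathcal{O}(\varepsilon^2)$, for instance from smooth dependence of the KAM torus on $\varepsilon$, exactly as you propose. Without it, the conclusion holds with $o(\varepsilon)$ in place of $\mathcal{O}(\varepsilon^2)$, which is what you state.
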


\begin{proof}
We formally construct a smooth function \( \vartheta_\varepsilon: \mathbb{T}^n \to \mathbb{R} \) as
\[
\vartheta_\varepsilon(x)=u_0(x)+\varepsilon \vartheta_1(x),\qquad \forall x\in \mathbb{T}^n,
\]
where \( \vartheta_1(x) \) is to be determined later.
Substituting \( \vartheta_\varepsilon(x)\) into equation \eqref{eq:HJCepsilon} and applying Taylor expansion, we obtain
\begin{align*}
	H\bigl(d\vartheta_\varepsilon + c_\varepsilon\bigr) + \varepsilon f(x)
	&= H(0) 
	+ \varepsilon \bigl\langle \partial_p H(0), d\vartheta_1 \bigr\rangle
	+ \bigl\langle \partial_p H(0), c_\varepsilon \bigr\rangle
	+ \varepsilon f(x) + \mathcal{O}(\varepsilon^2)\\
	&= \alpha(0) 
	+ \varepsilon\Bigl( \bigl\langle \omega, d\vartheta_1 \bigr\rangle 
	+ f(x) + \bigl\langle \omega, c_1 \bigr\rangle \Bigr) 
	+ \mathcal{O}(\varepsilon^2),
\end{align*}
where we have used \( c_\varepsilon = \varepsilon c_1 + o(\varepsilon) \).
If we define
\[
\vartheta_1(x) := \sum_{k \neq {0}} \frac{f_k e^{ikx}}{i \langle k, \omega\rangle},
\]
where \( f_k \) denotes the \( k \)-th Fourier coefficient of the function \( f(x) \), then the non-resonance of $\omega$ guarantees the solvability of the corresponding small divisor problem. Consequently, we have
\begin{equation}\label{ceplison}
H(d \vartheta_\varepsilon+c_\varepsilon)+\varepsilon f(x)=\alpha(0)+\varepsilon([f]+\langle c_1, \omega\rangle)+\mathcal{O}(\varepsilon^2),
\end{equation}
for any $x\in \mathbb{T}^n$.
Let \( x_{\min} \) and \( x_{\max} \) be points in \( \mathbb{T}^n \) where the function \( u_\varepsilon - \vartheta_\varepsilon \) attains its minimum and maximum, respectively. Then
\[
u_\varepsilon(x_{\min}) - \vartheta_\varepsilon(x_{\min}) \leq u_\varepsilon(x) - \vartheta_\varepsilon(x) \leq u_\varepsilon(x_{\max}) - \vartheta_\varepsilon(x_{\max}).
\]
By adding or subtracting appropriate constants to \( \vartheta_\varepsilon \), and using the definitions of subsolutions and supersolutions, we obtain the following inequalities:
\[
\left\{
\begin{aligned}
	&H(d \vartheta_\varepsilon (x_{\min}))+\varepsilon f(x_{\min}) \geq \alpha(\varepsilon),\\
	&H(d \vartheta_\varepsilon (x_{\max}))+\varepsilon f(x_{\max}) \leq \alpha(\varepsilon).
\end{aligned}
\right.
\]
Substituting equation \eqref{ceplison} into the above system of inequalities yields
\[
\alpha(0)+\varepsilon([f]+\langle c_1, \omega\rangle)+\mathcal{O}(\varepsilon^2) \geq \alpha(\varepsilon) \geq \alpha(0)+\varepsilon([f]+\langle c_1, \omega\rangle)+\mathcal{O}(\varepsilon^2).
\]
Hence, we conclude that
\[
\alpha(\varepsilon)=\alpha(0)+\varepsilon([f]+\langle c_1, \omega\rangle)+\mathcal{O}(\varepsilon^2),
\]
which completes the proof.
\end{proof}

\begin{thm}[Linear response]\label{Thm:KAM-W1}
	Assume that \eqref{ass:A4} and \eqref{ass:A5} hold. 
	Suppose that \(f \in C^\infty(\mathbb{T}^n, \mathbb{R})\) is the smooth potential associated with the perturbed Hamiltonian \eqref{eq:pert-ham}, 
	then the Mather measure $\widetilde{\mathfrak{M}}(\varepsilon)$ admits a linear response; i.e., for any 
	$g \in C_c^\infty(T\mathbb{T}^n,\mathbb{R})$, we have 
	\begin{align*}
	 \int g\, d\widetilde{\mathfrak{M}}_1
	&= \langle \partial_1 g(x,\omega), \psi_1(x,\varepsilon) \rangle \\
	&\quad +\sum_{k\in\Z^n\backslash \{{\bf 0}\}} \frac{\langle \partial_2 g(x,\omega), \partial_{pp}H(0)k\rangle}{\langle k,\omega\rangle} f_k \, e^{i\langle k,x\rangle}\\
	&\quad +\langle \partial_2 g(x,\omega), \partial_{pp}H(0)c_1\rangle,
	\end{align*}
where $\widetilde{\mathfrak{M}}_1$, $\psi_1(x,\varepsilon)$, $c_1$ are as defined in \eqref{def:Mtilde}, \eqref{eq:psi-expansion}, \eqref{def:cvarepsilon}, and $f_k$ denotes the Fourier coefficient of $f$ for $k\in\mathbb{Z}^n$.
\end{thm}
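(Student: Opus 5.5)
The plan is to write $\widetilde{\mathfrak{M}}(\varepsilon)$ explicitly through the KAM conjugacy of Lemma~\ref{lem:KAM for T^n flow} and then differentiate in $\varepsilon$. Since $\widetilde{\mathcal T}^\varepsilon_\omega$ is conjugate, via $\psi_\varepsilon\circ\pi$, to the rigid rotation $\rho^t_\omega$ with $\omega$ non-resonant, the unique invariant measure on the torus is the pull-back of Haar measure. Hence the projected Mather measure is $\mathfrak M(\varepsilon)=(\psi_\varepsilon^{-1})_\#\,\mathrm{Leb}$. By the graph property, $\widetilde{\mathfrak M}(\varepsilon)$ is the push-forward of $\mathfrak M(\varepsilon)$ under $x\mapsto (x,V_\varepsilon(x))$, where $V_\varepsilon(x)=\partial_pH(c_\varepsilon+du_\varepsilon(x))$. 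For $g\in C^\infty_c(T\mathbb T^n,\mathbb R)$ this gives
\[
\int g\,d\widetilde{\mathfrak M}(\varepsilon)=\int_{\mathbb T^n} g\bigl(\psi_\varepsilon^{-1}(y),\,V_\varepsilon(\psi_\varepsilon^{-1}(y))\bigr)\,dy .
\]
At $\varepsilon=0$ we have $\psi_0=\mathrm{id}$ and $V_0\equiv\partial_pH(0)=\omega$, so the integrand reduces to $g(y,\omega)$.

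Next I Taylor expand the integrand in $\varepsilon$. From \eqref{eq:psi-expansion}, $\psi_\varepsilon^{-1}(y)=y-\varepsilon\psi_1(y,\varepsilon)+O(\varepsilon^2)$, using the uniform $C^{1+\delta}$ bound on $\psi_1$. From the expansions of $c_\varepsilon$ and $u_\varepsilon$ in Lemma~\ref{lem:KAM for T^n flow},
\[
V_\varepsilon(x)=\omega+\varepsilon\,\partial_{pp}H(0)\bigl(c_1+du_1(x)\bigr)+o(\varepsilon)
\]
uniformly in $x$, by the $C^{2+\delta}$ bound on $u_1$. Plugging these in and expanding $g$ to first order yields three terms: a term $\langle\partial_1 g(y,\omega),\psi_1\rangle$ (up to the sign convention for the conjugacy direction), a term $\langle\partial_2g,\partial_{pp}H(0)\,du_1\rangle$, and a term $\langle\partial_2 g,\partial_{pp}H(0)c_1\rangle$. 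The remainders are controlled uniformly because $g$ has compact support and bounded second derivatives. Dividing by $\varepsilon$ and letting $\varepsilon\to0$ therefore gives \eqref{def:Mtilde}, with each term integrated over $\mathbb{T}^n$. The statement writes these terms without the integral sign.

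It remains to identify $du_1$. Here I use the first-order computation in the proof of Proposition~\ref{Prop:Ceplison}. Inserting $u_\varepsilon=u_0+\varepsilon u_1+o(\varepsilon)$ into \eqref{eq:HJCepsilon} and matching the $O(\varepsilon)$ terms gives the cohomological equation
\[
\langle\omega,du_1\rangle+f+\langle\omega,c_1\rangle=\text{const}.
\]
Uniqueness up to constants, from \cite{FathiSorrentino2009}, then forces $u_1=\vartheta_1$, so that $u_1=\sum_{k\ne0}f_ke^{i\langle k,x\rangle}/(i\langle k,\omega\rangle)$. Differentiating gives $du_1=\sum_{k\ne0}k f_k e^{i\langle k,x\rangle}/\langle k,\omega\rangle$, which produces the Fourier series term in the statement. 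The Diophantine condition and the smoothness of $f$ make this series converge in $C^1$, which justifies differentiating term by term.

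The main obstacle is making the $\varepsilon$-expansions rigorous with uniform remainders. Lemma~\ref{lem:KAM for T^n flow} gives the expansions of $c_\varepsilon$ and $u_\varepsilon$ only formally, with $o(\varepsilon)$ remainders, and $\psi_1$ still depends on $\varepsilon$. I would first use the smooth (or at least $C^1$) dependence on parameters of Pöschel's KAM construction \cite{Po} to get $\|du_\varepsilon-\varepsilon du_1\|_{C^0}=o(\varepsilon)$ and $\psi_1(\cdot,\varepsilon)\to\psi_1(\cdot,0)$ in $C^0$. I would then interpret $\psi_1(x,\varepsilon)$ in the formula as its limit at $\varepsilon=0$.
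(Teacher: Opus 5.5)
Your route is the same as the paper's: KAM conjugacy, graph property, first-order Taylor expansion of $g$, and identification of $du_1$ through the linearized Hamilton--Jacobi equation. On two points you are more careful than the paper.
\begin{itemize}
\item With the conjugacy direction of Lemma~\ref{lem:KAM for T^n flow}, one has $(\psi_\varepsilon)_\#\mathfrak M(\varepsilon)=\mathrm{Leb}$. Hence $\mathfrak M(\varepsilon)=(\psi_\varepsilon^{-1})_\#\mathrm{Leb}$, not $(\psi_\varepsilon)_\#\mathfrak M(0)$ as the paper writes.
\item The right-hand side must be integrated over $\mathbb T^n$, with $\psi_1(\cdot,\varepsilon)$ replaced by its limit.
\end{itemize}

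However, the step identifying $u_1$ is wrong. Your cohomological equation is correct and gives $\langle\omega,du_1\rangle=[f]-f=-\sum_{k\neq0}f_ke^{i\langle k,x\rangle}$. But $\vartheta_1=\sum_{k\neq0}f_ke^{i\langle k,x\rangle}/(i\langle k,\omega\rangle)$ satisfies $\langle\omega,d\vartheta_1\rangle=f-[f]$. Inserting $\vartheta_1$ into your equation gives $2f-[f]+\langle\omega,c_1\rangle$, which is not constant. Uniqueness is not the issue: the solution is $u_1=-\vartheta_1$ up to constants, so
\[
du_1=-\sum_{k\neq0}\frac{k\,f_k}{\langle k,\omega\rangle}\,e^{i\langle k,x\rangle}.
\]
Your own Taylor expansion then produces the Fourier term with a minus sign. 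In dimension one you can check this directly: $u_1'=([f]-f)/\omega$. The paper's proof has the same slip. It writes $[f]-f(x)=\sum_{k\neq0}f_ke^{i\langle k,x\rangle}$, and the $\vartheta_1$ of Proposition~\ref{Prop:Ceplison} also has the wrong sign. That is where the sign in the statement comes from.

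The $\psi_1$ term has the analogous problem. From $\psi_\varepsilon^{-1}(y)=y-\varepsilon\psi_1(y,\varepsilon)+O(\varepsilon^2)$ you get $-\int_{\mathbb T^n}\langle\partial_1 g(y,\omega),\psi_1(y,0)\rangle\,dy$. Writing ``up to the sign convention'' does not settle this. You must either declare that you use the reverse conjugacy, so that $\mathfrak M(\varepsilon)=(\psi_\varepsilon)_\#\mathrm{Leb}$, or keep the minus sign.

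So your argument, carried out correctly, proves the integrated formula with the sign of the Fourier term reversed. Under the lemma's convention, the sign of the $\psi_1$ term is reversed too. You reach the formula exactly as stated only through the incorrect identification $u_1=\vartheta_1$.

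A useful consistency check is the infinitesimal form of the conjugacy, $D\psi_\varepsilon\,V_\varepsilon=\omega$. Expanded to first order in $\varepsilon$, it gives
\[
D\psi_1\,\omega=-\partial_{pp}H(0)(du_1+c_1).
\]
This ties the signs of the first two terms together. Averaging it over $\mathbb T^n$ gives $\partial_{pp}H(0)c_1=0$, hence $c_1=0$, so the last term of the formula actually vanishes.

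Your plan for making the remainders uniform, via smooth dependence on $\varepsilon$ in the KAM construction for fixed Diophantine $\omega$, is the right one. The paper's proof does not address this at all.
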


\begin{proof}
By the KAM theorem (Lemma~\ref{lem:KAM for T^n flow}), the Hamilton--Jacobi equation \eqref{eq:HJCepsilon} admits a smooth solution $u_{\varepsilon}$ for every $\varepsilon \ge 0$. Moreover, there exists a diffeomorphism $\psi_\varepsilon : \mathbb{T}^n \to \mathbb{T}^n$ such that, for each $\varepsilon \ge 0$, the projected Hamiltonian flow $\pi \circ \varphi^t_{H_\varepsilon}$ is conjugated by $\psi_\varepsilon$ to the linear flow on $\mathbb{T}^n$ with frequency vector $\omega$.

Since the Mather measures are supported on Lipschitz graphs, we introduce the vector fields
\[
V_\varepsilon(x) := \partial_p H_\varepsilon\bigl(x, du_{\varepsilon}(x) + c_\varepsilon\bigr)
= \partial_p H\bigl(du_{\varepsilon}(x) + c_\varepsilon\bigr), \qquad x \in \mathbb{T}^n,
\]
and
\[
V_0(x) := \partial_p H(0)
= \omega, \qquad x \in \mathbb{T}^n.
\]
If we expand
\[
u_\varepsilon = u_0 + \varepsilon u_1 + o(\varepsilon), 
\qquad 
c_\varepsilon = \varepsilon c_1 + o(\varepsilon),
\]
then the vector field \(V_\varepsilon\) admits the asymptotic expansion
\[
V_\varepsilon(x) 
= \partial_p H\bigl( d u_\varepsilon(x) + c_\varepsilon \bigr)
= V_0(x) + \varepsilon \partial_{pp}H(0)(du_1+ c_1) + o(\varepsilon).
\]
Using equation \eqref{eq:HJCepsilon} together with Proposition~\ref{Prop:Ceplison}, we have
\[
\alpha(\varepsilon) = H\bigl(d u_{\varepsilon}+c_\varepsilon \bigr) + \varepsilon f(x)
= \alpha(0) + \varepsilon ([f]+\langle \omega, c_1 \rangle) + o(\varepsilon),
\]
which gives
\[
\langle \omega, du_1(x)\rangle 
= [f] - f(x) 
= \{f\}(x).
\]
Writing the Fourier expansion $u_1(x) = \sum_{k\in\mathbb{Z}^n} u_{1,k}\, e^{i\langle k,x\rangle}$, we obtain
\[
\langle \omega, d u_1(x)\rangle 
= \sum_{k\neq {0}} i \langle \omega, k\rangle\, u_{1,k}\, e^{i\langle k,x\rangle} 
= \sum_{k\neq {0}} f_k\, e^{i\langle k,x\rangle},
\]
which yields $u_{1,k} = \frac{f_k}{i\langle k,\omega\rangle}$ for $k\neq {0}$.
Consequently, we have
\begin{equation}\label{eq:V1k}
d u_1(x) = \sum_{k\neq {0}} i k \, u_{1,k} \, e^{i\langle k,x\rangle} 
= \sum_{k\neq {0}} \frac{k f_k}{\langle k, \omega \rangle} \, e^{i\langle k,x\rangle}.
\end{equation}
Due to the definition of the KAM torus and the definition of
\(\psi_\varepsilon\) in Lemma~\ref{lem:KAM for T^n flow}, it follows that
\[
\mathfrak{M}(\varepsilon) = (\psi_\varepsilon)_\# \mathfrak{M}(0).
\]
Then for any $g\in C_c^\infty(T\mathbb{T}^n,\mathbb{R})$ we have
\[
	\int g\,d\widetilde{\mathfrak{M}}(\varepsilon)
	- \int g\,d\widetilde{\mathfrak{M}}(0)
	= \int_{\mathbb{T}^n} \Bigl[ g(\psi_\varepsilon(x), V_\varepsilon(\psi_\varepsilon(x))) - g(x, V_0(x)) \Bigr]\, d\mathfrak{M}(0).
\]
By the regularity properties guaranteed by the KAM theorem, the maps $\psi_\varepsilon$ and the vector fields $V_\varepsilon$ admit asymptotic expansions in powers of $\varepsilon$:
\[
\psi_\varepsilon(x) = x + \varepsilon \psi_1(x,\varepsilon) + o(\varepsilon), \qquad
V_\varepsilon(x) = V_0(x) + \varepsilon \partial_{pp}H(0)(du_1+ c_1) + o(\varepsilon),
\]
so that
\[
V_\varepsilon(\psi_\varepsilon(x)) = V_0(x) +  \varepsilon \partial_{pp}H(0)(du_1+ c_1) + o(\varepsilon).
\]
Applying the first-order multivariate Taylor expansion of the smooth function
$g(x,p)$ around $(x, \omega)$ gives
\[
\begin{aligned}
	 & \; g(\psi_\varepsilon(x), V_\varepsilon(\psi_\varepsilon(x))) - g(x, \omega) \\
	=& \langle \partial_1 g(x,\omega), \psi_\varepsilon(x) - x \rangle
	+ \langle \partial_2 g(x,\omega), V_\varepsilon(\psi_\varepsilon(x)) - \omega \rangle
	+ o(\varepsilon) \\
	=& \varepsilon\, \langle \partial_1 g(x,\omega), \psi_1(x,\varepsilon) \rangle
	+ \varepsilon\, \langle \partial_2 g(x,\omega), \partial_{pp}H(0)(du_1+ c_1) \rangle
	+ o(\varepsilon).
\end{aligned}
\]
Substituting the expression for $d u_1(x)$ from \eqref{eq:V1k}, we obtain
\[
\begin{aligned}
	& \; g(\psi_\varepsilon(x), V_\varepsilon(\psi_\varepsilon(x))) - g(x, \omega) \\
	=& \; \varepsilon \, \langle \partial_1 g(x,\omega), \psi_1(x,\varepsilon) \rangle
	+ \varepsilon \sum_{k\neq {0}} 
	\frac{\langle \partial_2 g(x,\omega), \partial_{pp}H(0)k\rangle}{\langle k,\omega\rangle} 
	f_k \, e^{i\langle k,x\rangle} \\
	& \; + \varepsilon \, \langle \partial_2 g(x,\omega), \partial_{pp}H(0)c_1\rangle
	+ o(\varepsilon).
\end{aligned}
\]
Integrating over $\mathfrak{M}(0)$ and taking the limit as $\varepsilon \to 0$, we obtain the desired expression and establish the claimed linear response of $\widetilde{\mathfrak{M}}(\varepsilon)$. 
\end{proof}

\begin{cor}\label{cor:W1-Lip}
	Assume that \eqref{ass:A4} and \eqref{ass:A5} hold, and let 
	$f \in C^{\infty}(\mathbb{T}^n,\mathbb{R})$ be the smooth potential associated with the perturbed Hamiltonian \eqref{eq:pert-ham}. 
	Then, for $|\varepsilon| \ll 1$,
	\[
	W_1\bigl(\widetilde{\mathfrak{M}}(\varepsilon),\,\widetilde{\mathfrak{M}}(0)\bigr)
	\leq \mathcal{O}(\varepsilon).
	\]
	In other words, the Wasserstein distance between the perturbed and unperturbed Mather measures 
	is Lipschitz continuous with respect to the perturbation parameter~$\varepsilon$.
\end{cor}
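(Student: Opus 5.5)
The plan is to bound $W_1$ directly through the Kantorovich--Rubinstein duality \eqref{eq:KRdistance}, using the KAM conjugacy of Lemma~\ref{lem:KAM for T^n flow} in place of the Birkhoff-average argument of Theorem~\ref{Thm1}. The latter only gives a Hölder exponent; the conjugacy lets us write the perturbed measure as an explicit push-forward of the unperturbed one.

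By the KAM conjugacy, the projected Mather measure satisfies $\mathfrak{M}(\varepsilon)=(\psi_\varepsilon)_\#\mathfrak{M}(0)$, where $\mathfrak{M}(0)$ is Lebesgue measure on $\mathbb{T}^n$. (Strictly, $\psi_\varepsilon$ conjugates the flow to $\rho_\omega^t$, so the correct formula is $(\psi_\varepsilon^{-1})_\#$ of Lebesgue. Since $\psi_\varepsilon^{-1}=\mathrm{id}+O(\varepsilon)$ in $C^0$ as well, nothing changes.) The lifted measure is then
\[
\widetilde{\mathfrak{M}}(\varepsilon)=(\Psi_\varepsilon)_\#\widetilde{\mathfrak{M}}(0),\qquad \Psi_\varepsilon(x,\omega):=\bigl(\psi_\varepsilon(x),V_\varepsilon(\psi_\varepsilon(x))\bigr),
\]
with $V_\varepsilon=\partial_pH(du_\varepsilon+c_\varepsilon)$.

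Using the coupling $(\mathrm{id},\Psi_\varepsilon)_\#\widetilde{\mathfrak{M}}(0)$ in the primal formulation \eqref{eq:Wasserstein} gives
\[
W_1\bigl(\widetilde{\mathfrak{M}}(\varepsilon),\widetilde{\mathfrak{M}}(0)\bigr)\le \int_{\mathbb{T}^n}\Bigl(|\psi_\varepsilon(x)-x|+|V_\varepsilon(\psi_\varepsilon(x))-\omega|\Bigr)\,dx .
\]
Two estimates then remain.

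\textbf{First term.} By \eqref{eq:psi-expansion}, $|\psi_\varepsilon(x)-x|=|\varepsilon|\,|\psi_1(x,\varepsilon)|\le\kappa_1|\varepsilon|$, uniformly in $x$.

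\textbf{Second term.} Since $H$ is smooth, the mean value theorem gives
\[
|V_\varepsilon(y)-\omega|=|\partial_pH(du_\varepsilon(y)+c_\varepsilon)-\partial_pH(0)|\le \|\partial_{pp}H\|_{L^\infty(B)}\bigl(\|du_\varepsilon\|_{\infty}+|c_\varepsilon|\bigr)
\]
on a fixed ball $B$. So it suffices to have $\|du_\varepsilon\|_\infty+|c_\varepsilon|\le C|\varepsilon|$.

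This last bound is the main point to check. The expansions $u_\varepsilon=u_0+\varepsilon u_1+o(\varepsilon)$ and $c_\varepsilon=\varepsilon c_1+o(\varepsilon)$ are stated only asymptotically. What is needed is a uniform statement: the remainders are $O(\varepsilon)$ in $C^1$ for all $|\varepsilon|<\varepsilon_0$, not merely $o(\varepsilon)$ along $\varepsilon\to0$. I would take this from the quantitative form of Pöschel's KAM theorem (Theorem A in \cite{Po}), which gives the torus embedding and the cohomology class with estimates linear in the size $\varepsilon\|f\|$ of the perturbation, together with the bounds $\|u_1\|_{C^{2+\delta}}\le\kappa_2$ and $|c_1|\le\kappa_2$. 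Since $u_0$ is constant, this yields $\|du_\varepsilon\|_{C^0}\le C|\varepsilon|$ and $|c_\varepsilon|\le C|\varepsilon|$ for $|\varepsilon|<\varepsilon_0$.

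If only the asymptotic form were available, I would instead combine Theorem~\ref{Thm:KAM-W1} with the uniform $C^{1+\delta}$ and $C^{2+\delta}$ bounds. The difference quotients $\varepsilon^{-1}\bigl(\int g\,d\widetilde{\mathfrak{M}}(\varepsilon)-\int g\,d\widetilde{\mathfrak{M}}(0)\bigr)$ are then uniformly bounded over $1$-Lipschitz $g$, after truncating $g$ to a compact set containing all the supports, which are uniformly bounded in $v$. This gives the same conclusion.

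Summing the two estimates gives $W_1\le(\kappa_1+C\|\partial_{pp}H\|)\,|\varepsilon|=\mathcal{O}(\varepsilon)$.
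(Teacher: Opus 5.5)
Your argument is correct. The paper gives no separate proof of Corollary~\ref{cor:W1-Lip}; it presents it as a consequence of Theorem~\ref{Thm:KAM-W1}. That proof writes $\widetilde{\mathfrak{M}}(\varepsilon)$ as a push-forward of $\mathfrak{M}(0)$ under $x\mapsto(\psi_\varepsilon(x),V_\varepsilon(\psi_\varepsilon(x)))$ and expands both components to first order in $\varepsilon$.

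You use the same representation, and you are right that the push-forward should be by $\psi_\varepsilon^{-1}$. The difference is that you bound $W_1$ through an explicit coupling rather than through the linear-response limit. This is the better way to organise it: convergence of the difference quotients for each fixed $g\in C_c^\infty$ does not by itself give a bound uniform over $\mathrm{Lip}_1$, whereas your coupling gives that uniformity at once.

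Your worry about uniform remainders is more than is needed. The claim is only for $|\varepsilon|\ll 1$, so an $o(\varepsilon)$ remainder in $C^1$ already yields $\|du_\varepsilon\|_\infty+|c_\varepsilon|\le C|\varepsilon|$. You can also avoid $u_\varepsilon$ and $c_\varepsilon$ altogether. Differentiate the conjugacy $\psi_\varepsilon(\varphi^t_{V_\varepsilon}(x))=\psi_\varepsilon(x)+t\omega$ at $t=0$ to get $V_\varepsilon=(I+\varepsilon D_x\psi_1)^{-1}\omega$. Hence $|V_\varepsilon-\omega|\le C\kappa_1|\omega|\,|\varepsilon|$ for $|\varepsilon|$ small, using only the uniform $C^{1+\delta}$ bound on $\psi_1$ stated in Lemma~\ref{lem:KAM for T^n flow}, with no appeal to P\"oschel's estimates on $u_\varepsilon$ or $c_\varepsilon$.

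Your fallback via Theorem~\ref{Thm:KAM-W1} is not an independent route. Bounding the difference quotients uniformly over $1$-Lipschitz $g$ reduces to exactly the same sup bound on $V_\varepsilon-\omega$ that your main argument already uses.
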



\begin{thebibliography}{99}

\bibitem{Bal}
V. Baladi, Linear response, or else, in {\it Proceedings of the International Congress of Mathematicians, Seoul 2014}, Vol. III, 525--545, Kyung Moon Sa, Seoul, 2014.

\bibitem{Bangert99}
V. Bangert, Minimal measures and minimizing closed normal one-currents, Geom. Funct. Anal. {\bf 9} (1999), no.~3, 413--427. MR1708452.

\bibitem{BernardC11}
P. Bernard, Existence of $C^{1,1}$ critical sub-solutions of the Hamilton--Jacobi equation on compact manifolds, Ann. Sci. \'Ec. Norm. Sup\'er. (4) {\bf 40} (2007), no.~3, 445--452. MR2339280.

\bibitem{Bohr47}
H. Bohr, {\it Almost Periodic Functions}, Chelsea, New York, 1947. MR0020163.

\bibitem{Bolotin03openproblem}
S. Bolotin, New connections between dynamical systems and PDEs, Notes from the workshop, American Institute of Mathematics (AIMS), Palo Alto, July 2003. Available at: https://aimath.org/WWN/dynpde/dynpde.pdf.

\bibitem{Cannarsa_Sinestrari_book}
P. Cannarsa and C. Sinestrari, {\it Semiconcave Functions, Hamilton--Jacobi Equations, and Optimal Control}, Vol. 58 Progress in Nonlinear Differential Equations and Their Applications, Birkh\"auser Boston, Inc., Boston, MA, 2004.

\bibitem{Cassels1957}
J. W. S. Cassels, {\it An Introduction to Diophantine Approximation}, Cambridge University Press, Cambridge, 1957.

\bibitem{ChengActa2011}
C.-Q. Cheng, Non-existence of KAM torus, Acta Math. Sin. (Engl. Ser.) {\bf 27} (2011), no.~2, 397--404. MR2754043.

\bibitem{Con}
G. Contreras, Ground states are generically a periodic orbit, Invent. Math. {\bf 205} (2016), no.~2, 383--412.

\bibitem{Con2}
G. Contreras, Proof of the C2 Mane's conjecture on surfaces, ArXiv: 2408.01009, 2024.



\bibitem{CarandSorrentino2021}
C. Carminati, S. Marmi, D. Sauzin and A. Sorrentino,  On the regularity of Mather's $\beta$-function for standard-like twist maps, Adv. Math. {\bf 377} (2021), Paper No. 107460, 22 pp. MR4186003.

\bibitem{Davini06}
A. Davini and A. Siconolfi, A generalized dynamical approach to the large time behavior of solutions of Hamilton--Jacobi equations, SIAM J. Math. Anal. {\bf 38} (2006), no.~2, 478--502. MR2237158.

\bibitem{DaviniFathi2016}
A. Davini, A. Fathi, R. Iturriaga, and M. Zavidovique, Convergence of the solutions of the discounted Hamilton--Jacobi equation: convergence of the discounted solutions, Invent. Math. {\bf 206} (2016), no.~1, 29--55. MR3556524.

\bibitem{Fathi04}
A. Fathi and A. Siconolfi, Existence of $C^1$ critical subsolutions of the Hamilton--Jacobi equation, Invent. Math. {\bf 155} (2004), no.~2, 363--388. MR2031431.

\bibitem{Fathi05cv}
A. Fathi and A. Siconolfi, PDE aspects of Aubry--Mather theory for quasiconvex Hamiltonians, Calc. Var. Partial Differential Equations {\bf 22} (2005), no.~2, 185--228. MR2106767.

\bibitem{FathiSorrentino2009}
A. Fathi, A. Giuliani, and A. Sorrentino, Uniqueness of invariant Lagrangian graphs in a homology or a cohomology class, Ann. Sc. Norm. Super. Pisa Cl. Sci. (5) {\bf 8} (2009), no.~4, 659--680. MR2647908.

\bibitem{Fathiweakkam}
A. Fathi, {\it Weak KAM Theorem in Lagrangian Dynamics}, Cambridge University Press, 2025, to appear.

\bibitem{FR1}
A. Figalli and L. Rifford, Closing Aubry sets I, Comm. Pure Appl. Math. {\bf 68} (2015), no.~2, 210--285. MR3298663.

\bibitem{FR2}
A. Figalli and L. Rifford, Closing Aubry sets II, Comm. Pure Appl. Math. {\bf 68} (2015), no.~3, 345--412. MR3310519.

\bibitem{Gal}
S. Galatolo and J. Sedro, Quadratic response of random and deterministic dynamical systems, Chaos {\bf 30} (2020), no.~2, 023113.

\bibitem{Sorrentino21}
S. Galatolo and A. Sorrentino, Quantitative statistical stability and linear response for irrational rotations and diffeomorphisms of the circle, Discrete Contin. Dyn. Syst. {\bf 42} (2022), no.~2, 815--839. MR4369031.

\bibitem{ZhangCPDE}
B. Hu, S. N. T. Tu, and J. L. Zhang, Polynomial convergence rate for quasi-periodic homogenization of Hamilton--Jacobi equations and application to ergodic estimates, Comm. Partial Differential Equations {\bf 50} (2025), no.~1-2, 211--244. MR4858223.

\bibitem{Herman79}
M.-R. Herman, Sur la conjugaison diff\'erentiable des diff\'eomorphismes du cercle \`a des rotations, Inst. Hautes \'Etudes Sci. Publ. Math. No. 49 (1979), 5--233. MR0538680.

\bibitem{Klein2021ETDS}
S. Klein, X.-C. Liu, and A. Melo, Uniform convergence rate for Birkhoff means of certain uniquely ergodic toral maps, Ergodic Theory Dynam. Systems {\bf 41} (2021), no.~11, 3363--3388. MR4321715.

\bibitem{YanETDS2014}
Z. Liang, J. Yan, and Y. Yi, Viscous stability of quasi-periodic tori, Ergodic Theory Dynam. Systems {\bf 34} (2014), no.~1, 185--210. MR3163030.

\bibitem{Mane92}
R. Ma{\~n}{\'e}, On the minimizing measures of Lagrangian dynamical systems, Nonlinearity {\bf 5} (1992), 623--638.

\bibitem{Mane96}
R. Ma{\~n}{\'e}, Generic properties and problems of minimizing measures of Lagrangian systems, Nonlinearity {\bf 9} (1996), no.~2, 273--310. MR1384478.

\bibitem{Mather91}
J. N. Mather, Action minimizing invariant measures for positive definite Lagrangian systems, Math. Z. {\bf 207} (1991), no.~2, 169--207. MR1109661.

\bibitem{Natio96}
K. Naito, Fractal dimensions of almost periodic attractors, Ergodic Theory Dynam. Systems {\bf 16} (1996), no.~4, 791--803. MR1406434.

\bibitem{Po}
J. P\"oschel, Integrability of Hamiltonian systems on Cantor sets, Comm. Pure Appl. Math. {\bf 35} (1982), 653--695.


\bibitem{VillaniOT}
C. Villani, {\it Optimal Transport: Old and New}, Vol. 338 of Grundlehren der Mathematischen Wissenschaften, Springer-Verlag, Berlin, 2009.

\end{thebibliography}
\end{document}